\tikzset{black/.style={circle,fill=black,inner sep=3pt,outer sep=3pt},white/.style={circle,fill=white,draw=black,inner sep=3pt,outer sep=3pt}}
\numberwithin{equation}{section}
\newtheorem{theorem}{Theorem}[section]
\newtheorem{corollary}[theorem]{Corollary}
\newtheorem{lemma}[theorem]{Lemma}
\newtheorem{proposition}[theorem]{Proposition}
\theoremstyle{definition}
\newtheorem{definition}[theorem]{Definition}
\newtheorem{remark}[theorem]{Remark}
\newtheorem{example}[theorem]{Example}
\DeclareMathOperator{\tors}{\mathsf{tors}}
\DeclareMathOperator{\stors}{\mathsf{stors}}
\DeclareMathOperator{\fstors}{\mathsf{f-stors}}
\DeclareMathOperator{\tstr}{\mathsf{t-str}}
\begin{document}
\title[Intervals of $s$-torsion pairs]{Intervals of $s$-torsion pairs in extriangulated categories with negative first extensions}

\author{Takahide Adachi}
\address{T.~Adachi: Faculty of Global and Science Studies, Yamaguchi University, 1677-1 Yoshida, Yamaguchi 753-8541, Japan}
\email{tadachi@yamaguchi-u.ac.jp}
\thanks{T.~Adachi is supported by JSPS KAKENHI Grant Number JP20K14291.}

\author{Haruhisa Enomoto}
\address{H.~Enomoto: Graduate School of Mathematics, Nagoya University, Chikusa-ku, Nagoya 464-8602, Japan}
\email{m16009t@math.nagoya-u.ac.jp}
\thanks{H.~Enomoto is supported by JSPS KAKENHI Grant Number JP18J21556.}

\author{Mayu Tsukamoto}
\address{M.~Tsukamoto: Graduate school of Sciences and Technology for Innovation, Yamaguchi University, 1677-1 Yoshida, Yamaguchi 753-8512, Japan}
\email{tsukamot@yamaguchi-u.ac.jp}
\thanks{M.~Tsukamoto is supported by JSPS KAKENHI Grant Number JP19K14513.}

\subjclass[2020]{Primary 18E40, Secondly 18G80}
\keywords{torsion pairs, extriangulated categories, $t$-structures}

\begin{abstract}
As a general framework for the studies of $t$-structures on triangulated categories and torsion pairs in abelian categories, we introduce the notions of extriangulated categories with negative first extensions and $s$-torsion pairs.
We define a heart of an interval in the poset of $s$-torsion pairs, which naturally becomes an extriangulated category with a negative first extension.
This notion generalizes hearts of $t$-structures on triangulated categories and hearts of twin torsion pairs in abelian categories.
In this paper, we show that an interval in the poset of $s$-torsion pairs is bijectively associated with $s$-torsion pairs in the corresponding heart.
This bijection unifies two well-known bijections:
One is the bijection induced by HRS-tilt of $t$-structures on triangulated categories.
The other is Asai--Pfeifer's and Tattar's bijections for torsion pairs in an abelian category, which is related to $\tau$-tilting reduction and brick labeling.
\end{abstract}
\maketitle

\section{Introduction}
Be\u{\i}linson, Bernstein and Deligne \cite{BBD81} introduced the notion of \emph{$t$-structures} on triangulated categories to study perverse sheaves.
As one of remarkable properties of $t$-structures, various abelian categories inside triangulated categories can be realized as \emph{hearts} of $t$-structures.
Happel, Reiten and Smal{\o} \cite{HRS96} provided a construction of new $t$-structures through torsion pairs in the heart of a given $t$-structure.
This construction is called the \emph{HRS-tilt} and plays a crucial role in Bridgeland stability condition.
The HRS-tilt induces a close connection between $t$-structures and torsion pairs as follows.

\begin{theorem}\label{intro_thm1}\cite{HRS96,BR07,P06,W10}
Let $\mathcal{D}$ be a triangulated category with shift functor $\Sigma$.
Let $(\mathcal{U}, \mathcal{V})$ be a $t$-structure on  $\mathcal{D}$ and $\mathcal{H}:=\mathcal{U}\cap \Sigma\mathcal{V}$ the heart of $(\mathcal{U}, \mathcal{V})$.
Then there exists a poset isomorphism between the poset of $t$-structures $(\mathcal{U}', \mathcal{V}')$ on $\mathcal{D}$ satisfying $\Sigma\mathcal{U}\subseteq \mathcal{U}'\subseteq \mathcal{U}$ and the poset of torsion pairs in $\mathcal{H}$.
\end{theorem}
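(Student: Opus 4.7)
The plan is to exhibit mutually inverse, order-preserving maps
\[
\Phi \colon \bigl\{(\mathcal{U}',\mathcal{V}') \text{ $t$-structure}: \Sigma\mathcal{U}\subseteq \mathcal{U}'\subseteq \mathcal{U}\bigr\} \;\longleftrightarrow\; \bigl\{\text{torsion pairs in } \mathcal{H}\bigr\} \colon \Psi,
\]
where $\Phi$ is given by intersecting with the heart, $\Phi(\mathcal{U}',\mathcal{V}') := (\mathcal{U}'\cap\mathcal{H},\, \Sigma\mathcal{V}'\cap\mathcal{H})$, and $\Psi$ is given by the HRS-tilt: $\mathcal{U}':=\{X\in\mathcal{U} : H^{0}(X)\in\mathcal{T}\}$ and $\mathcal{V}'$ defined dually using the cohomology functor $H^{0}\colon \mathcal{U}\to\mathcal{H}$ induced by the truncations of the original $t$-structure.

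First I would verify that $\Phi(\mathcal{U}',\mathcal{V}')$ is a torsion pair in $\mathcal{H}$. The Hom-vanishing $\mathrm{Hom}_{\mathcal{H}}(\mathcal{U}'\cap\mathcal{H},\,\Sigma\mathcal{V}'\cap\mathcal{H})=0$ is immediate from the orthogonality axiom of the $t$-structure $(\mathcal{U}',\mathcal{V}')$. For the canonical short exact sequence of an object $X\in\mathcal{H}$, I take its truncation triangle $U'\to X\to V'\to \Sigma U'$ with respect to $(\mathcal{U}',\mathcal{V}')$. Using the enclosure $\Sigma\mathcal{U}\subseteq\mathcal{U}'\subseteq\mathcal{U}$ (equivalently $\mathcal{V}\subseteq\mathcal{V}'\subseteq\Sigma^{-1}\mathcal{V}$), the long exact sequence of Hom-groups forces $U'\in\mathcal{U}\cap\Sigma\mathcal{V}=\mathcal{H}$ and $\Sigma V'\in\mathcal{H}$, so that rotating produces the desired short exact sequence $0\to U'\to X\to \Sigma V'\to 0$ in $\mathcal{H}$ with $U'\in\mathcal{U}'\cap\mathcal{H}$ and $\Sigma V'\in\Sigma\mathcal{V}'\cap\mathcal{H}$.

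Next I would verify that $\Psi(\mathcal{T},\mathcal{F})$ is a $t$-structure in the stated interval. The inclusions $\Sigma\mathcal{U}\subseteq \Psi(\mathcal{T},\mathcal{F})_{1}\subseteq\mathcal{U}$ are built into the definition. For the truncation triangle of a general $X\in\mathcal{D}$, I would first apply the $(\mathcal{U},\mathcal{V})$-truncation to obtain $U\to X\to V\to\Sigma U$, then apply the torsion pair $(\mathcal{T},\mathcal{F})$ to the top cohomology $H^{0}(U)\in\mathcal{H}$ to obtain $T\to H^{0}(U)\to F\to \Sigma T$, and finally splice these via the octahedral axiom to produce a triangle whose end-terms lie in $\mathcal{U}'$ and $\mathcal{V}'$ respectively. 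The orthogonality $\mathrm{Hom}(\mathcal{U}',\Sigma^{-1}\mathcal{V}')=0$ is a diagram chase combining the torsion-pair vanishing with the original $t$-structure vanishing applied to cohomology.

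Finally, the two equalities $\Phi\Psi=\mathrm{id}$ and $\Psi\Phi=\mathrm{id}$ follow by unwinding the definitions together with the uniqueness of truncation triangles; monotonicity of both maps is evident from the formulas. The main technical obstacle is the octahedral splicing in the construction of $\Psi$: one must simultaneously track how the HRS-aisle relates to the original aisle through the cohomology functor, and show that the resulting composite truncation lies in $\mathcal{U}'$ rather than merely in $\mathcal{U}$. All other steps are either Hom-vanishing checks or formal manipulations of triangles inside the interval $\Sigma\mathcal{U}\subseteq\mathcal{U}'\subseteq\mathcal{U}$.
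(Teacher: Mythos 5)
Your overall strategy (the classical HRS correspondence: intersect with the heart in one direction, tilt along the torsion pair in the other) is sound, and it is genuinely different in presentation from the paper, which obtains this statement as a corollary of a general result on $s$-torsion pairs in extriangulated categories; there the octahedral splicing you flag as the main obstacle is replaced by associativity of the operation $\ast$ and the computation $(\Sigma\mathcal{U}\ast\mathcal{T})\ast(\mathcal{F}\ast\mathcal{V})=(\Sigma\mathcal{U}\ast\mathcal{H})\ast\mathcal{V}=\mathcal{U}\ast\mathcal{V}=\mathcal{D}$. However, your map $\Phi$ carries a shift error in its second component, and this is not a harmless typo because your verification is built on it. With the conventions of the statement ($\mathcal{D}=\mathcal{U}\ast\mathcal{V}$, $\mathcal{D}(\mathcal{U},\mathcal{V})=0$, $\Sigma\mathcal{U}\subseteq\mathcal{U}$, $\mathcal{H}=\mathcal{U}\cap\Sigma\mathcal{V}$), the torsion-free class attached to $(\mathcal{U}',\mathcal{V}')$ must be $\mathcal{V}'\cap\mathcal{H}$, not $\Sigma\mathcal{V}'\cap\mathcal{H}$: taking $(\mathcal{U}',\mathcal{V}')=(\mathcal{U},\mathcal{V})$, your formula returns $(\mathcal{H},\Sigma\mathcal{V}\cap\mathcal{H})=(\mathcal{H},\mathcal{H})$, which is not a torsion pair unless $\mathcal{H}=0$, whereas the correct answer is $(\mathcal{H},0)$. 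Correspondingly, the claim that $\mathrm{Hom}(\mathcal{U}'\cap\mathcal{H},\Sigma\mathcal{V}'\cap\mathcal{H})=0$ is ``immediate from orthogonality'' is false: orthogonality gives $\mathrm{Hom}(\mathcal{U}',\mathcal{V}')=0$ and hence $\mathrm{Hom}(\Sigma\mathcal{U}',\mathcal{V}')=0$, but $\mathrm{Hom}(\mathcal{U}',\Sigma\mathcal{V}')$ contains $\mathrm{Hom}(\mathcal{H},\mathcal{H})$ and does not vanish. Likewise, in the truncation triangle $U'\to X\to V'\to\Sigma U'$ of $X\in\mathcal{H}$ the cohomology long exact sequence forces $V'$ itself (not $\Sigma V'$) into $\mathcal{H}$, and the resulting short exact sequence is $0\to U'\to X\to V'\to 0$; no rotation of the triangle produces $0\to U'\to X\to\Sigma V'\to 0$. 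Your parenthetical ``equivalently $\mathcal{V}\subseteq\mathcal{V}'\subseteq\Sigma^{-1}\mathcal{V}$'' has the same problem (the correct co-aisle interval is $\mathcal{V}\subseteq\mathcal{V}'\subseteq\Sigma\mathcal{V}$), so the shifts need to be fixed consistently throughout.

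A second, independent gap is that the well-definedness of $\Psi$ --- that $\Sigma\mathcal{U}\ast\mathcal{T}=\{X\in\mathcal{U}:H^0(X)\in\mathcal{T}\}$ together with its right orthogonal is again a $t$-structure --- is the substantive half of the theorem, and you only describe it (``splice via the octahedral axiom'', ``a diagram chase''). In particular the orthogonality $\mathrm{Hom}(\mathcal{U}',\mathcal{V}')=0$ and the closure $\Sigma\mathcal{U}'\subseteq\mathcal{U}'$ require actual arguments, not just the existence of the decomposition triangle. Once $\Phi$ is corrected to $(\mathcal{U}'\cap\mathcal{H},\,\mathcal{V}'\cap\mathcal{H})$ and the $\Psi$ step is carried out in detail, the remaining verifications ($\Phi\Psi=\mathrm{id}$ and $\Psi\Phi=\mathrm{id}$ via uniqueness of truncation triangles, and monotonicity) go through as you indicate.
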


Recently, inspired by $\tau$-tilting theory \cite{AIR14}, the study of the poset structures of torsion pairs in abelian categories has been developed by various authors \cite{IRTT15, BCZ19, DIRRT, AP}.
Let $t_{1}:=(\mathcal{T}_{1}, \mathcal{F}_{1})$ and $t_{2}:=(\mathcal{T}_{2}, \mathcal{F}_{2})$ be torsion pairs in an abelian category $\mathcal{A}$.
We define $t_1 \leq t_2$ if it satisfies $\mathcal{T}_{1} \subseteq \mathcal{T}_{2}$, and in this case, $[t_{1},t_{2}]$ denotes the \emph{interval} in the poset of torsion pairs in $\mathcal{A}$ consisting of $t$ with $t_1 \leq t \leq t_2$.
We call the subcategory $\mathcal{H}_{[t_{1}, t_{2}]}:=\mathcal{T}_{2} \cap \mathcal{F}_{1}$ the \emph{heart} of $[t_{1}, t_{2}]$, which tells us ``the difference'' between $t_{1}$ and $t_{2}$.
As for this heart, the following isomorphism was established in \cite{AP} (when the heart is a wide subcategory) and \cite{T}.

\begin{theorem}\label{intro_thm2}\cite[Theorem A]{T}
Let $\mathcal{A}$ be an abelian category and $[t_{1},t_{2}]$ an interval in the poset of torsion pairs in $\mathcal{A}$.
Then there exists a poset isomorphism between $[t_{1},t_{2}]$ and the poset of torsion pairs in $\mathcal{H}_{[t_{1},t_{2}]}$.
\end{theorem}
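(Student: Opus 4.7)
The plan is to construct explicit, mutually inverse, order-preserving maps $\Phi:[t_{1},t_{2}]\to\tors(\mathcal{H})$ and $\Psi$ in the opposite direction, where $\mathcal{H}:=\mathcal{H}_{[t_{1},t_{2}]}=\mathcal{T}_{2}\cap\mathcal{F}_{1}$. A preliminary (and classical) fact I would invoke is that $\mathcal{H}$ inherits the structure of an abelian category whose short exact sequences are the ambient sequences in $\mathcal{A}$ all of whose terms lie in $\mathcal{H}$. Granting this, set $\Phi(t):=(\mathcal{T}\cap\mathcal{H},\;\mathcal{F}\cap\mathcal{H})$ for $t=(\mathcal{T},\mathcal{F})$. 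Well-definedness comes from restricting the ambient torsion decomposition $0\to tH\to H\to fH\to 0$ for $H\in\mathcal{H}$: since $\mathcal{T}\subseteq\mathcal{T}_{2}$ and $\mathcal{F}\subseteq\mathcal{F}_{1}$, and since $\mathcal{F}_{1}$ (respectively $\mathcal{T}_{2}$) is closed under subobjects (respectively quotients), both $tH$ and $fH$ lie in $\mathcal{H}$.

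For the inverse, set $\Psi(\mathcal{X},\mathcal{Y}):=(\mathcal{T}_{1}\ast\mathcal{X},\;\mathcal{Y}\ast\mathcal{F}_{2})$, where $\mathcal{C}_{1}\ast\mathcal{C}_{2}$ denotes the class of objects $A$ admitting a short exact sequence $0\to C_{1}\to A\to C_{2}\to 0$ with $C_{i}\in\mathcal{C}_{i}$. The substantive content is that this is a torsion pair in $\mathcal{A}$. For the torsion approximation of $A\in\mathcal{A}$, I would apply three successive truncations: first the $t_{2}$-truncation $0\to A'\to A\to F_{2}\to 0$ with $F_{2}\in\mathcal{F}_{2}$; then the $t_{1}$-truncation of $A'\in\mathcal{T}_{2}$, giving $0\to T_{1}\to A'\to h\to 0$ with $h\in\mathcal{T}_{2}\cap\mathcal{F}_{1}=\mathcal{H}$; finally the $(\mathcal{X},\mathcal{Y})$-truncation of $h$ in $\mathcal{H}$, giving $0\to X\to h\to Y\to 0$. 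Pulling back along $X\hookrightarrow h$ carves out $aA\subseteq A'$ sitting in $0\to T_{1}\to aA\to X\to 0$, so $aA\in\mathcal{T}_{1}\ast\mathcal{X}$, while the quotient $fA:=A/aA$ fits in $0\to Y\to fA\to F_{2}\to 0$, so $fA\in\mathcal{Y}\ast\mathcal{F}_{2}$. For the $\operatorname{Hom}$-vanishing, I would exploit the containments $\mathcal{T}_{1}\ast\mathcal{X}\subseteq\mathcal{T}_{2}$ and $\mathcal{Y}\ast\mathcal{F}_{2}\subseteq\mathcal{F}_{1}$: any morphism $A\to B$ between such objects annihilates the $\mathcal{T}_{1}$-subobject of $A$ (because $B\in\mathcal{F}_{1}$) and factors out the $\mathcal{F}_{2}$-quotient of $B$ (because $A\in\mathcal{T}_{2}$), reducing to a morphism $\mathcal{X}\to\mathcal{Y}$, which is zero in $\mathcal{H}$ and hence in $\mathcal{A}$ by fullness.

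Mutual inversion is then direct. On the one hand $(\mathcal{T}_{1}\ast\mathcal{X})\cap\mathcal{H}=\mathcal{X}$, because an $H\in\mathcal{H}$ written as $0\to T\to H\to X\to 0$ with $T\in\mathcal{T}_{1}$ and $X\in\mathcal{X}$ has $T\subseteq H\in\mathcal{F}_{1}$, forcing $T\in\mathcal{T}_{1}\cap\mathcal{F}_{1}=0$; dually $(\mathcal{Y}\ast\mathcal{F}_{2})\cap\mathcal{H}=\mathcal{Y}$. On the other hand, for $t\in[t_{1},t_{2}]$ and $A\in\mathcal{T}$, the $t_{1}$-truncation $0\to a_{1}A\to A\to h\to 0$ has $h$ a quotient of $A\in\mathcal{T}$, hence $h\in\mathcal{T}\cap\mathcal{H}$, witnessing $A\in\mathcal{T}_{1}\ast(\mathcal{T}\cap\mathcal{H})$; dually for $\mathcal{F}$. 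Order preservation of both maps is then immediate from the definitions. The step I expect to be the main obstacle is verifying that $(\mathcal{T}_{1}\ast\mathcal{X},\mathcal{Y}\ast\mathcal{F}_{2})$ is genuinely a torsion pair: the coherent pullback/pushout bookkeeping threading $t_{1}$, $t_{2}$, and $(\mathcal{X},\mathcal{Y})$ together, together with the diagram chase underlying the $\operatorname{Hom}$-vanishing. Verifying that $\mathcal{H}$ is itself abelian is also nontrivial, but it is an already-established prerequisite.
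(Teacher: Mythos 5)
Your maps $\Phi$ and $\Psi$ are exactly the ones the paper uses, and your verification steps (the decomposition $\mathcal{T}=\mathcal{T}_{1}\ast(\mathcal{T}\cap\mathcal{F}_{1})$, the three successive truncations establishing $\mathcal{A}=(\mathcal{T}_{1}\ast\mathcal{X})\ast(\mathcal{Y}\ast\mathcal{F}_{2})$, the two-step factorization giving the $\operatorname{Hom}$-vanishing, and the mutual-inversion computations) are the abelian-category shadows of the paper's arguments. The difference is one of packaging: the paper proves a general theorem for extriangulated categories with negative first extensions and obtains this statement as the special case $\mathbb{E}^{-1}=0$; in that setting the pullback/pushout bookkeeping you identify as the main obstacle is absorbed into the associativity of the operation $\ast$, so that (STP1) for $\Psi(\mathcal{X},\mathcal{Y})$ becomes the one-line computation $(\mathcal{T}_{1}\ast\mathcal{X})\ast(\mathcal{Y}\ast\mathcal{F}_{2})=(\mathcal{T}_{1}\ast\mathcal{H})\ast\mathcal{F}_{2}=\mathcal{T}_{2}\ast\mathcal{F}_{2}=\mathcal{A}$. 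Your direct construction is correct and more concrete; the paper's route buys the generalization to $t$-structures and beyond. Where you argue via subobjects (e.g.\ $T\in\mathcal{T}_{1}\cap\mathcal{F}_{1}=0$ in the proof of $(\mathcal{T}_{1}\ast\mathcal{X})\cap\mathcal{H}=\mathcal{X}$), the paper must instead use orthogonality ($(\mathcal{T}_{1}\ast\mathcal{X})\cap\mathcal{F}_{1}\subseteq\mathcal{H}\cap{}^{\perp}\mathcal{Y}=\mathcal{X}$), since subobjects are unavailable in an extriangulated category; both are valid here.

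One correction: your ``preliminary (and classical) fact'' that $\mathcal{H}=\mathcal{T}_{2}\cap\mathcal{F}_{1}$ is abelian is false in general. The heart of an interval of torsion pairs is only a quasi-abelian (in particular exact) category --- this is precisely the point of Tattar's paper; for instance, taking $t_{1}$ to be the classical torsion pair on abelian groups and $t_{2}=(\mathcal{A},0)$ gives $\mathcal{H}=$ torsion-free abelian groups, which is not abelian. Fortunately your argument never uses abelianness of $\mathcal{H}$: it only uses the induced exact structure (short exact sequences of $\mathcal{A}$ with all terms in $\mathcal{H}$), which is how ``torsion pair in $\mathcal{H}_{[t_{1},t_{2}]}$'' must be, and in the paper is, interpreted. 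So the proof stands once that sentence is amended.
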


This isomorphism originally appeared in the context of $\tau$-tilting reduction in \cite{J15}, and is a useful tool to study various subcategories of module categories, e.g. \cite{DIRRT, AP, ES}.

The aim of this paper is to show that two poset isomorphisms in Theorem \ref{intro_thm1} and Theorem \ref{intro_thm2} are consequences of a more general poset isomorphism in extriangulated categories.
We start with giving a common framework for studying $t$-structures on triangulated categories and torsion pairs in abelian categories.
In \cite{NP19}, Nakaoka and Palu introduced the notion of extriangulated categories as a simultaneous generalization of triangulated categories and exact categories.
Motivated by the fact that $t$-structures are exactly torsion pairs whose negative first extensions vanish (see Lemma \ref{lem_shift}), we introduce \emph{negative first extensions in extriangulated categories}, that is, an additive bifunctor $\mathbb{E}^{-1}$ satisfying a certain condition (see Definition \ref{def:negative_ext}).
We can naturally regard triangulated categories and exact categories as extriangulated categories with negative first extensions (see Example \ref{ex_neg}).
As a common generalization of $t$-structures and torsion pairs (in abelian categories), we introduce the notion of $s$-torsion pairs.
We call a pair $(\mathcal{T}, \mathcal{F})$ of subcategories an \emph{$s$-torsion pair} if it is a torsion pair (in the usual sense) and  $\mathbb{E}^{-1}(\mathcal{T}, \mathcal{F})=0$ holds.
Since the set of $s$-torsion pairs becomes a partially ordered set by inclusions, we can define the \emph{hearts} of intervals as with torsion pairs in abelian categories.
Moreover, each heart can be naturally regarded as an extriangulated category with a negative first extension.
In this setting, the following theorem is a main result of this paper.

\begin{theorem}[Theorem \ref{mainthm}] \label{intro_thm}
Let $\mathcal{C}$ be an extriangulated category with a negative first extension.
Let $[t_{1}, t_{2}]$ be an interval in the poset of $s$-torsion pairs in $\mathcal{C}$ and $\mathcal{H}_{[t_{1}, t_{2}]}$ its heart.
Then there exists a poset isomorphism between $[t_{1}, t_{2}]$ and the poset of $s$-torsion pairs in $\mathcal{H}_{[t_{1}, t_{2}]}$.
\end{theorem}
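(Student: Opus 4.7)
The plan is to construct explicit maps in both directions and verify they are mutually inverse order-preserving bijections. Writing $t_{i} = (\mathcal{T}_{i},\mathcal{F}_{i})$, for $t = (\mathcal{T},\mathcal{F}) \in [t_{1},t_{2}]$ I set
\[
\Phi(t) := \bigl(\mathcal{T} \cap \mathcal{H}_{[t_{1},t_{2}]},\; \mathcal{F} \cap \mathcal{H}_{[t_{1},t_{2}]}\bigr),
\]
and for an $s$-torsion pair $(\mathcal{U},\mathcal{V})$ of $\mathcal{H}_{[t_{1},t_{2}]}$ I set
\[
\Psi(\mathcal{U},\mathcal{V}) := \bigl(\mathcal{T}_{1} * \mathcal{U},\; \mathcal{V} * \mathcal{F}_{2}\bigr),
\]
where $\mathcal{X} * \mathcal{Y}$ denotes the class of middle terms of conflations $X \to Z \to Y$ with $X \in \mathcal{X}$ and $Y \in \mathcal{Y}$.

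First I would verify that $\Phi(t)$ is an $s$-torsion pair on the heart. The Hom- and $\mathbb{E}^{-1}$-orthogonalities transfer verbatim from $t$. For $X \in \mathcal{H}_{[t_{1},t_{2}]} = \mathcal{T}_{2} \cap \mathcal{F}_{1}$, take the torsion conflation $T \to X \to F$ of $X$ with respect to $t$; since torsion classes are closed under deflations and torsionfree classes under inflations, one obtains $T \in \mathcal{T} \cap \mathcal{F}_{1}$ and $F \in \mathcal{F} \cap \mathcal{T}_{2}$, giving the desired decomposition inside the inherited extriangulated structure on $\mathcal{H}_{[t_{1},t_{2}]}$.

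The bulk of the work is to show that $\Psi(\mathcal{U},\mathcal{V})$ is an $s$-torsion pair lying in $[t_{1},t_{2}]$. Containment in the interval is automatic since $\mathcal{T}_{1}, \mathcal{U} \subseteq \mathcal{T}_{2}$ (and dually) together with closure of $\mathcal{T}_{2}$ and $\mathcal{F}_{1}$ under extensions. The Hom- and $\mathbb{E}^{-1}$-orthogonalities between $\mathcal{T}_{1} * \mathcal{U}$ and $\mathcal{V} * \mathcal{F}_{2}$ follow by a diagram chase along the long exact sequences induced by either $*$-factor, once one records the cross-vanishings $\mathrm{Hom}(\mathcal{T}_{1},\mathcal{V}) = 0$ (from $\mathcal{V} \subseteq \mathcal{F}_{1}$) and $\mathrm{Hom}(\mathcal{U},\mathcal{F}_{2}) = 0$ (from $\mathcal{U} \subseteq \mathcal{T}_{2}$), together with the four primitive orthogonalities of $t_{1}$, $t_{2}$ and $(\mathcal{U},\mathcal{V})$. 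For the torsion decomposition of an arbitrary $X \in \mathcal{C}$, I would chain three conflations: the $t_{2}$-decomposition $T_{2} \to X \to F_{2}$, the $t_{1}$-decomposition $T_{1} \to T_{2} \to H$ of $T_{2}$ with $H \in \mathcal{H}_{[t_{1},t_{2}]}$ (since $H$ is a deflation image of a $\mathcal{T}_{2}$-object), and the $(\mathcal{U},\mathcal{V})$-decomposition $U \to H \to V$. One application of $(\mathrm{ET4})^{\mathrm{op}}$ to the composable deflations $T_{2} \to H \to V$ produces a $Z$ fitting in conflations $T_{1} \to Z \to U$ and $Z \to T_{2} \to V$, placing $Z$ in $\mathcal{T}_{1} * \mathcal{U}$. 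A second application of $(\mathrm{ET4})$ to the composable inflations $Z \to T_{2} \to X$ produces $W$ with conflations $V \to W \to F_{2}$ and $Z \to X \to W$; the latter is the required torsion decomposition for $\Psi(\mathcal{U},\mathcal{V})$.

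Finally, $\Phi \circ \Psi = \mathrm{id}$ and $\Psi \circ \Phi = \mathrm{id}$ follow from uniqueness up to isomorphism of torsion decompositions, traced through the explicit constructions above, and monotonicity of both maps under inclusion is immediate. \textbf{Main obstacle.} The central difficulty is the double (ET4)-gluing in the previous paragraph: both applications must be executed cleanly while controlling that all auxiliary $\mathrm{Hom}$- and $\mathbb{E}^{-1}$-vanishings survive the gluing. This is precisely the step where the hypothesis $\mathbb{E}^{-1}(\mathcal{T}_{i},\mathcal{F}_{i}) = 0$ becomes indispensable, blocking pathologies that would arise otherwise; in effect, it is what allows the abelian-category argument of Tattar to be transplanted into a framework with no kernels or cokernels.
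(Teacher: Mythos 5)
Your proposal is correct and follows essentially the same route as the paper: the same maps $\Phi$ and $\Psi$ (note $\mathcal{T}\cap\mathcal{H}_{[t_1,t_2]}=\mathcal{T}\cap\mathcal{F}_1$ and $\mathcal{F}\cap\mathcal{H}_{[t_1,t_2]}=\mathcal{T}_2\cap\mathcal{F}$), the same long-exact-sequence checks for the orthogonality conditions, and the same use of uniqueness of torsion decompositions for the mutual-inverse step. The only structural difference is that your explicit double $(\mathrm{ET4})$/$(\mathrm{ET4})^{\mathrm{op}}$ gluing for the decomposition $\mathcal{C}=(\mathcal{T}_1\ast\mathcal{U})\ast(\mathcal{V}\ast\mathcal{F}_2)$ is packaged in the paper as associativity of $\ast$ together with the identities $\mathcal{T}_2=\mathcal{T}_1\ast(\mathcal{T}_2\cap\mathcal{F}_1)$ and $\mathcal{F}_1=(\mathcal{T}_2\cap\mathcal{F}_1)\ast\mathcal{F}_2$; your unwinding is the same computation at a lower level. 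One caveat: the phrase ``torsion classes are closed under deflations and torsion-free classes under inflations'' is false as a blanket statement in this setting (a cone of an arbitrary map out of a torsion object need not be torsion). What is true, and what your argument actually needs, is that for a conflation $A\to B\to C\dashrightarrow$ with $A,B\in\mathcal{T}$ one gets $C\in\mathcal{T}$, and this already uses the negative-extension vanishing $\mathbb{E}^{-1}(\mathcal{T},\mathcal{F})=0$ via the sequence $\mathbb{E}^{-1}(A,F)\to\mathcal{C}(C,F)\to\mathcal{C}(B,F)$; in your application the first two terms do lie in the relevant torsion class, so the step is sound once restated this way.
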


Regarding triangulated categories and abelian categories as extriangulated categories with negative first extensions, we can immediately recover Theorem \ref{intro_thm1} and Theorem \ref{intro_thm2} (see Corollary \ref{recover-tstr} and Corollary \ref{recover-tors} respectively).

\section{Extriangulated categories with negative first extensions}
Throughout this paper, we assume that every category is skeletally small, that is, the isomorphism classes of objects form a set.
In addition, all subcategories are assumed to be full and closed under isomorphisms.

In this section, we collect terminologies and basic properties of extriangulated categories which we need later. Moreover, we introduce a  negative first extension structure of an extriangulated category.
We omit the precise definition of extriangulated categories.
For details, we refer to \cite{NP19}.

An extriangulated category $\mathcal{C}=(\mathcal{C},\mathbb{E},\mathfrak{s})$ consists of the following data which satisfy certain axioms (see \cite[Definition 2.12]{NP19}).
\begin{itemize}
\item $\mathcal{C}$ is an additive category.
\item $\mathbb{E}:\mathcal{C}^{\mathrm{op}}\times \mathcal{C}\to \mathcal{A}b$ is an additive bifunctor, where $\mathcal{A}b$ denotes the category of abelian groups.
\item $\mathfrak{s}$ is a correspondence which associates an equivalence class $[A\rightarrow B\rightarrow C]$ of complexes in $\mathcal{C}$ to each $\delta\in \mathbb{E}(C,A)$.
\end{itemize}
Here two complexes $A\xrightarrow{f}B\xrightarrow{g}C$ and $A\xrightarrow{f'}B'\xrightarrow{g'}C$ in $\mathcal{C}$ are \emph{equivalent} if there is an isomorphism $b:B\to B'$ such that the diagram
\[
\begin{tikzcd}
A \rar["f"] \dar[equal] & B \rar["g"] \dar["b"', "\cong"] & C \dar[equal] \\
A \rar["f'"] & B' \rar["g'"] & C
\end{tikzcd}
\]
is commutative, and let $[A\xrightarrow{f}B\xrightarrow{g}C]$ denote the equivalence class of $A\xrightarrow{f}B\xrightarrow{g}C$.

A complex $A\xrightarrow{f}B\xrightarrow{g}C$ in $\mathcal{C}$ is called an \emph{$\mathfrak{s}$-conflation} if there exists $\delta\in \mathbb{E}(C,A)$ such that $\mathfrak{s}(\delta)=[A\xrightarrow{f}B\xrightarrow{g}C]$.
We often write the $\mathfrak{s}$-conflation as $A\xrightarrow{f}B\xrightarrow{g}C\overset{\delta}{\dashrightarrow}$.
Let $\delta\in \mathbb{E}(C,A)$.
By Yoneda's lemma, we have two natural transformations $\delta_{\sharp}:\mathcal{C}(-,C)\to \mathbb{E}(-,A)$ and $\delta^{\sharp}:\mathcal{C}(A,-)\to\mathbb{E}(C,-)$, that is, for each $W\in\mathcal{C}$,
\begin{align}
&(\delta_{\sharp})_{W}: \mathcal{C}(W,C)\to \mathbb{E}(W,A)\ \ (\varphi\mapsto\mathbb{E}(\varphi,A)(\delta)),\notag\\
&(\delta^{\sharp})_{W}: \mathcal{C}(A,W)\to \mathbb{E}(C,W)\ \ (\varphi\mapsto\mathbb{E}(C,\varphi)(\delta)).\notag
\end{align}
Any $\mathfrak{s}$-conflation induces two long exact sequences in $\mathcal{A}b$.

\begin{proposition}\cite[Corollary 3.12]{NP19}\label{prop_longex}
Let $\mathcal{C}$ be an extriangulated category and
$A\xrightarrow{f}B\xrightarrow{g}C\overset{\delta}{\dashrightarrow}$ an $\mathfrak{s}$-conflation.
Then, for each $W\in\mathcal{C}$, two sequences
\[
\begin{tikzcd}[row sep = 0]
\mathcal{C}(W,A) \rar["{\mathcal{C}(W,f)}"] & \mathcal{C}(W,B) \rar["{\mathcal{C}(W,g)}"] & \mathcal{C}(W,C)\rar["{(\delta_{\sharp})_{W}}"]  & \mathbb{E}(W,A) \rar["{\mathbb{E}(W,f)}"] & \mathbb{E}(W,B) \rar["{\mathbb{E}(W,g)}"] & \mathbb{E}(W,C),\\
\mathcal{C}(C,W) \rar["{\mathcal{C}(g,W)}"] & \mathcal{C}(B,W) \rar["{\mathcal{C}(f,W)}"] & \mathcal{C}(A,W)\rar["{(\delta^{\sharp})_{W}}"] & \mathbb{E}(C,W) \rar["{\mathbb{E}(g,W)}"] & \mathbb{E}(B,W) \rar["{\mathbb{E}(f,W)}"] & \mathbb{E}(A,W)
\end{tikzcd}
\]
are exact.
\end{proposition}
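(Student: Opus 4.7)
The plan is to establish each exactness statement directly from the axioms of an extriangulated category, exploiting the duality between the axioms (ET3) and (ET3)$^{\mathrm{op}}$, and (ET4) and (ET4)$^{\mathrm{op}}$. The two sequences are formally dual, so I would prove only the covariant sequence $\mathcal{C}(W,-)$; the contravariant one then follows by passing to the opposite extriangulated category. Exactness needs to be checked at four positions: $\mathcal{C}(W,B)$, $\mathcal{C}(W,C)$, $\mathbb{E}(W,A)$, and $\mathbb{E}(W,B)$.

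For exactness at $\mathcal{C}(W,B)$, the composition $\mathcal{C}(W,g)\circ\mathcal{C}(W,f)$ vanishes because $gf=0$ for any $\mathfrak{s}$-conflation (an immediate consequence of the realization axiom applied to identities). For the reverse inclusion, given $\varphi\colon W\to B$ with $g\varphi=0$, I would apply (ET3) to the morphism from the trivial $\mathfrak{s}$-conflation $W=W\to 0$ to the given conflation $A\to B\to C\overset{\delta}{\dashrightarrow}$ given by the pair $(\text{?},0)$; the axiom produces a morphism $W\to A$ making the required square commute, i.e.\ a factorization through $f$. Exactness at $\mathcal{C}(W,C)$ is the Yoneda translation of the fact that $\delta\in\mathbb{E}(C,A)$ is pulled back to $0$ along $g$, which holds since $g^{\ast}\delta$ corresponds to the split conflation $B\oplus 0 \to B\oplus C\to C$; combined with (ET3), any $\varphi\colon W\to C$ with $(\delta_{\sharp})_{W}(\varphi)=0$ lifts to a morphism $W\to B$.

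Exactness at $\mathbb{E}(W,A)$ is the most conceptual step: $\mathbb{E}(W,f)\circ(\delta_{\sharp})_{W}=0$ follows from $f^{\ast}\delta=0$, which is again visible from the realization. Conversely, if $\eta\in\mathbb{E}(W,A)$ satisfies $\mathbb{E}(W,f)(\eta)=0$, I would realize $\eta$ as an $\mathfrak{s}$-conflation $A\xrightarrow{a}Y\xrightarrow{b}W$; since $f_{\ast}\eta=0$, the pushout along $f$ splits, and (ET3) furnishes a morphism of conflations from this split conflation back to the original one, yielding the required $\varphi\colon W\to C$ with $(\delta_{\sharp})_{W}(\varphi)=\eta$.

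The hard position, and the one I expect to be the main obstacle, is exactness at $\mathbb{E}(W,B)$, as this is the step that genuinely needs the extriangulated octahedron (ET4). To show $\mathbb{E}(W,g)\circ\mathbb{E}(W,f)=0$ is straightforward. Conversely, given $\xi\in\mathbb{E}(W,B)$ with $g_{\ast}\xi=0$, I would realize $\xi$ as a conflation $B\xrightarrow{u}Z\xrightarrow{v}W$ and apply (ET4) to the composable pair $A\xrightarrow{f}B\xrightarrow{u}Z$: this produces a third conflation $A\to Z\to Q$ together with a conflation $C\to Q\to W$ compatible with $\delta$ and $\xi$. The hypothesis $g_{\ast}\xi=0$ forces the bottom conflation to split, and one reads off from the commutative data that $\xi=\mathbb{E}(W,f)(\zeta)$ for the appropriate $\zeta\in\mathbb{E}(W,A)$. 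The combinatorics of the octahedron is the only delicate ingredient; everything else reduces to diagram chasing with (ET3).
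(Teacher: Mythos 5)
The first thing to say is that the paper does not prove this proposition at all: it is quoted verbatim from \cite[Corollary 3.12]{NP19}, so there is no in-paper argument to compare against, and what you are really attempting is a reconstruction of Nakaoka--Palu's proof. Your architecture does match theirs: duality reduces to one of the two sequences; exactness at $\mathcal{C}(W,B)$ comes from the (ET3)-type axioms (this is essentially \cite[Proposition 3.3]{NP19}); the two ``connecting'' spots rest on the fact that an $\mathbb{E}$-extension vanishes if and only if its realization splits, together with $g^{\ast}\delta=0$ and $f_{\ast}\delta=0$; and the spot $\mathbb{E}(W,B)$ is indeed the one place where (ET4) genuinely enters. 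So the strategy is the right one.

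As written, however, the sketch contains concrete slips and leaves the hardest verifications implicit. (i) To factor $\varphi\colon W\to B$ with $g\varphi=0$ through $f$ you are given the middle and right components of a would-be morphism of $\mathfrak{s}$-conflations and must produce the left one; that is (ET3)$^{\mathrm{op}}$, not (ET3). (ii) The split conflation you name for $g^{\ast}\delta$ is the wrong one: $g^{\ast}\delta$ lives in $\mathbb{E}(B,A)$ and its vanishing is witnessed by $A\to A\oplus B\to B$ (the pullback along $g$ acquires a canonical section), whereas the conflation $B\to B\oplus C\to C$ you wrote down realizes $f_{\ast}\delta=0\in\mathbb{E}(C,B)$. (iii) At $\mathbb{E}(W,A)$, producing \emph{some} morphism $\varphi\colon W\to C$ is not the point; one must verify that the specific $\varphi$ obtained satisfies $\varphi^{\ast}\delta=\eta$, and this requires chasing the compatibility of $\delta_{\sharp}$ with morphisms of $\mathfrak{s}$-conflations --- that verification is the actual content of the step. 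The same remark applies at $\mathbb{E}(W,B)$: after (ET4) yields the conflations $A\to Z\to Q$ and $C\to Q\to W$ and the latter splits, the identity $\xi=f_{\ast}\zeta$ must be extracted from the precise compatibility conditions of (ET4) (that restricting the new extension along $C\to Q$ recovers $\delta$, and that pushing it forward along $f$ agrees with the restriction of $\xi$); ``one reads off'' hides exactly the bookkeeping that makes the lemma nontrivial. None of this is a wrong idea, but a complete argument needs those verifications spelled out --- or one does what the paper does and simply cites \cite[Corollary 3.12]{NP19}.
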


We define extension-closed subcategories of extriangulated categories.

\begin{definition}
Let $\mathcal{C}$ be an extriangulated category.
\begin{itemize}
\item[(1)] For two collections $\mathcal{X}$ and $\mathcal{Y}$ of objects in $\mathcal{C}$, let $\mathcal{X}\ast\mathcal{Y}$ denote the subcategory of $\mathcal{C}$ consisting of $M\in\mathcal{C}$ which admits an $\mathfrak{s}$-conflation $X\rightarrow M\rightarrow Y\dashrightarrow$ with $X\in \mathcal{X}$ and $Y\in\mathcal{Y}$.
\item[(2)] We say that a subcategory $\mathcal{C}'$ of $\mathcal{C}$ is an \emph{extension-closed subcategory} if $\mathcal{C}'\ast \mathcal{C}'\subseteq \mathcal{C}'$.
\end{itemize}
\end{definition}

For three collections $\mathcal{X},\mathcal{Y},\mathcal{Z}$ of objects in $\mathcal{C}$, we can easily check $(\mathcal{X}\ast \mathcal{Y})\ast \mathcal{Z}=\mathcal{X}\ast(\mathcal{Y}\ast \mathcal{Z})$ by the axioms (ET4) and (ET4)$^{\mathrm{op}}$ in \cite[Definition 2.12]{NP19}, that is, the operation $\ast$ is associative.
If $\mathcal{Y}$ contains a zero object in $\mathcal{C}$, then we have $\mathcal{X}\subseteq \mathcal{X}\ast \mathcal{Y}$ and $\mathcal{X}\subseteq \mathcal{Y}\ast \mathcal{X}$.
Indeed, for each $X\in \mathcal{X}$, there exist two $\mathfrak{s}$-conflations
\[
\begin{tikzcd}[row sep = 0]
X \rar["{\mathsf{id}_{X}}"] & X \rar & 0 \rar[dashrightarrow] & , \\
0 \rar & X \rar["\mathsf{id}_{X}"] & X \rar[dashed] &\
\end{tikzcd}
\]
by the additivity of $\mathfrak{s}$ (see \cite[Definition 2.10]{NP19}).
Thus $X\in\mathcal{X}\ast\mathcal{Y}$ follows from the first $\mathfrak{s}$-conflation and $X\in \mathcal{Y}\ast\mathcal{X}$ follows from the second one.
By \cite[Remark 2.18]{NP19}, extension-closed subcategories of extriangulated categories are naturally extriangulated categories.

Now we introduce a negative first extension structure on an extriangulated category.

\begin{definition}\label{def:negative_ext}
Let $\mathcal{C}$ be an extriangulated category.
A \emph{negative first extension structure} on $\mathcal{C}$ consists of the following data:
\begin{itemize}
\setlength{\itemindent}{20pt}
\item[(NE1)] $\mathbb{E}^{-1}:\mathcal{C}^{\mathrm{op}}\times \mathcal{C}\to \mathcal{A}b$ is an additive bifunctor.
\item[(NE2)] For each $\delta\in \mathbb{E}(C,A)$, there exist two natural transformations
\begin{align}
&\delta_{\sharp}^{-1}: \mathbb{E}^{-1}(-,C)\to \mathcal{C}(-,A),\notag\\
&\delta^{\sharp}_{-1}: \mathbb{E}^{-1}(A,-)\to \mathcal{C}(C,-)\notag
\end{align}
such that for each $\mathfrak{s}$-conflation
$A\xrightarrow{f}B\xrightarrow{g}C\overset{\delta}{\dashrightarrow}$ and each $W\in \mathcal{C}$, two sequences
\[
\begin{tikzcd}[row sep = 0, column sep=1.4cm]
\mathbb{E}^{-1}(W,A) \rar["{\mathbb{E}^{-1}(W,f)}"] & \mathbb{E}^{-1}(W,B) \rar["{\mathbb{E}^{-1}(W,g)}"] & \mathbb{E}^{-1}(W,C) \rar["{(\delta^{-1}_{\sharp})_{W}}"]&\mathcal{C}(W,A) \rar["{\mathcal{C}(W,f)}"]&\mathcal{C}(W,B),\\
\mathbb{E}^{-1}(C,W) \rar["{\mathbb{E}^{-1}(g,W)}"] & \mathbb{E}^{-1}(B,W) \rar["{\mathbb{E}^{-1}(f,W)}"] & \mathbb{E}^{-1}(A,W) \rar["{(\delta_{-1}^{\sharp})_{W}}"]&\mathcal{C}(C,W) \rar["{\mathcal{C}(g,W)}"]&\mathcal{C}(B,W)
\end{tikzcd}
\]
are exact.
\end{itemize}
Then we call $\mathcal{C}=(\mathcal{C},\mathbb{E},\mathfrak{s},\mathbb{E}^{-1})$ an \emph{extriangulated category with a negative first extension}.
\end{definition}

Typical examples of extriangulated categories are triangulated categories and exact categories (see \cite[Example 2.13]{NP19}).
We show that both categories naturally admit negative first extension structures.
In the rest of this paper, unless otherwise stated, we always regard these categories as extriangulated categories with negative first extensions defined below.

\begin{example}\label{ex_neg}
\begin{itemize}
\item[(1)] A triangulated category $\mathcal{D}$ becomes an extriangulated category with a negative first extension by the following data.
\begin{itemize}
\item[$\bullet$] $\mathbb{E}(C,A):=\mathcal{D}(C,\Sigma A)$ for all $A,C\in \mathcal{D}$, where $\Sigma$ is a shift functor of $\mathcal{D}$.
\item[$\bullet$] For $\delta\in \mathbb{E}(C,A)$, we take a triangle $A\xrightarrow{f}B\xrightarrow{g}C\xrightarrow{\delta}\Sigma A$.
Then we define $\mathfrak{s}(\delta):=[A\xrightarrow{f}B\xrightarrow{g}C]$.
\item[$\bullet$] $\mathbb{E}^{-1}(C,A):=\mathcal{D}(C,\Sigma^{-1}A)$ for all $A,C\in \mathcal{D}$.
\item[$\bullet$] For an $\mathfrak{s}$-conflation $A\xrightarrow{f}B\xrightarrow{g}C\overset{\delta}\dashrightarrow$, we define two natural transformations $\delta_{\sharp}^{-1}$ and $\delta_{-1}^{\sharp}$ as follows: for $W\in \mathcal{D}$,
\begin{align}
&(\delta_{\sharp}^{-1})_{W}:\mathbb{E}^{-1}(W,C)=\mathcal{D}(W,\Sigma^{-1}C)\xrightarrow{\mathcal{D}(W,\Sigma^{-1}\delta)}\mathcal{D}(W,A),\notag\\
&(\delta_{-1}^{\sharp})_{W}:\mathbb{E}^{-1}(A,W)=\mathcal{D}(A,\Sigma^{-1}W)\cong \mathcal{D}(\Sigma A,W)\xrightarrow{\mathcal{D}(\delta,W)}\mathcal{D}(C,W).\notag
\end{align}
\end{itemize}
Note that the negative sign in the axiom (TR2) of triangulated categories does not affect the exactness of (NE2) in Definition \ref{def:negative_ext}.
\item[(2)] An exact category $\mathcal{E}$ becomes an extriangulated category with a negative first extension by the following data.
\begin{itemize}
\item[$\bullet$] $\mathbb{E}(C,A)$ is the set of isomorphism classes of conflations in $\mathcal{E}$ of the form $A \rightarrowtail B \twoheadrightarrow C$ for $A,C \in \mathcal{E}$.
\item[$\bullet$] $\mathfrak{s}$ is the identity.
\item[$\bullet$] $\mathbb{E}^{-1}(C,A)=0$ for all $A,C\in \mathcal{E}$.
\item[$\bullet$] For each $W\in \mathcal{E}$, the maps $(\delta_{\sharp}^{-1})_{W}$ and $(\delta_{-1}^{\sharp})_{W}$ are zero.
\end{itemize}
Note that the above $\mathbb{E}^{-1}$ defines a negative first extension structure since every inflation is a monomorphism and every deflation is an epimorphism. Actually, the converse holds, see Proposition \ref{prop_exactcat}.
\item[(3)] Let $\mathcal{C}$ be an extriangulated category with a negative first extension and let $\mathcal{C}'$ be an extension-closed subcategory of $\mathcal{C}$. Then by restricting the extriangulated structure and the negative first extension structure to $\mathcal{C}'$, we can regard $\mathcal{C}'$ as an extriangulated category with a negative first extension.
\end{itemize}
\end{example}

Remark that negative first extension structures are not uniquely determined by given extriangulated categories.
\begin{example}\label{ex_nakayama}
Let $k$ be an algebraically closed field.
Consider the stable category $\mathcal{D}:=\operatorname{\underline{\mathsf{mod}}}\Lambda$ of a self-injective Nakayama $k$-algebra $\Lambda$ with three simple modules and the Loewy length three.
Then the Auslander-Reiten quiver of $\mathcal{D}$ is as follows, where two $\substack{1}$'s are identified.

\[
\begin{tikzpicture}
\node (1) at (0,0) {$\substack{1}$};
\node (2) at (2,0) {$\substack{2}$};
\node (3) at (4,0) {$\substack{3}$};
\node (11) at (6,0) {$\substack{1}$};
\node (21) at (1,1) {$\substack{2\\1}$};
\node (32) at (3,1) {$\substack{3\\2}$};
\node (13) at (5,1) {$\substack{1\\3}$};

\draw[->] (1) -- (21);
\draw[->] (21) -- (2);
\draw[->] (2) -- (32);
\draw[->] (32) -- (3);
\draw[->] (3) -- (13);
\draw[->] (13) -- (11);

\draw[dashed] (1) -- (2) -- (3) -- (11);
\draw[dashed] (0,1) -- (21) -- (32) -- (13) -- (6,1);
\end{tikzpicture}
\]
Since the subcategory $\mathcal{A} := \operatorname{\mathsf{add}}\{\substack{1},\substack{2\\1}, \substack{2}\}$ is clearly equivalent to the category of finite-dimensional representations of an $A_{2}$ quiver, it is abelian.
Thus $\mathcal{A}$ becomes an extriangulated category with a negative first extension $\mathbb{E}_{1}^{-1}:=0$ by Example \ref{ex_neg}(2).
On the other hand, since $\mathcal{A}$ is extension-closed in $\mathcal{D}$, it becomes an induced extriangulated category with a  negative first extension $\mathbb{E}_{2}^{-1}(-, -):=\mathcal{D}(-, \Sigma^{-1} -)$ by Example \ref{ex_neg}(3).
We can check that extriangulated category structures coincide with each other, but negative first extension structures do not.
Indeed, $\mathbb{E}_{2}^{-1}(\substack{2},\substack{1})=\mathcal{D}(\substack{2}, \Sigma^{-1}(\substack{1}))=\mathcal{D}(\substack{2}, \substack{3\\2})\neq 0$ holds.
We refer the reader to Example \ref{ex_fingldim} and Remark \ref{rem_periodic} for more examples of this kind.
\end{example}

Using negative first extension structures, we give a characterization of extriangulated categories to be exact categories.

\begin{proposition}\label{prop_exactcat}
Let $(\mathcal{C},\mathbb{E},\mathfrak{s})$ be an extriangulated category.
Then the following statements are equivalent.
\begin{itemize}
\item[(1)] $\mathbb{E}^{-1} = 0$ defines a negative first extension structure on $(\mathcal{C},\mathbb{E},\mathfrak{s})$.
\item[(2)] The class of $\mathfrak{s}$-conflations defines the structure of an exact category on $\mathcal{C}$.
\end{itemize}
\end{proposition}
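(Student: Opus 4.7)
The plan is to observe that when $\mathbb{E}^{-1}=0$, the axiom (NE2) collapses into a very concrete statement about $\mathfrak{s}$-conflations, and then to match this statement with the known characterization of those extriangulated categories that come from exact structures.

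First I would unpack (NE2) under the assumption $\mathbb{E}^{-1}=0$. With all the $\mathbb{E}^{-1}$-terms vanishing, the two exact sequences in Definition \ref{def:negative_ext} collapse to
\[
0 \longrightarrow \mathcal{C}(W,A) \xrightarrow{\mathcal{C}(W,f)} \mathcal{C}(W,B)
\qquad\text{and}\qquad
0 \longrightarrow \mathcal{C}(C,W) \xrightarrow{\mathcal{C}(g,W)} \mathcal{C}(B,W),
\]
for every $\mathfrak{s}$-conflation $A \xrightarrow{f} B \xrightarrow{g} C \overset{\delta}{\dashrightarrow}$ and every $W\in\mathcal{C}$. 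Exactness of these sequences for all $W$ is precisely the statement that $f$ is a monomorphism and $g$ is an epimorphism in $\mathcal{C}$. Conversely, if every $\mathfrak{s}$-inflation is a monomorphism and every $\mathfrak{s}$-deflation is an epimorphism, then taking $\mathbb{E}^{-1}=0$ and all $(\delta_{\sharp}^{-1})_W$, $(\delta_{-1}^{\sharp})_W$ to be the zero maps trivially yields a negative first extension structure. Thus (1) is equivalent to the statement that every $\mathfrak{s}$-inflation is monic and every $\mathfrak{s}$-deflation is epic.

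Next I would invoke the Nakaoka--Palu characterization of exact categories among extriangulated categories: an extriangulated category $(\mathcal{C},\mathbb{E},\mathfrak{s})$ arises from an exact structure in the sense of Example \ref{ex_neg}(2) if and only if every $\mathfrak{s}$-inflation is a monomorphism and every $\mathfrak{s}$-deflation is an epimorphism (see \cite[Corollary 3.18]{NP19}). One direction is immediate, since inflations (resp.\ deflations) in an exact category are by definition monomorphisms (resp.\ epimorphisms). The other direction requires checking the exact-category axioms (closure of inflations/deflations under composition and the pushout/pullback axioms), but all of these follow from the axioms of an extriangulated category once the monic/epic condition is imposed.

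Combining the two steps gives the equivalence: (1) $\Leftrightarrow$ every inflation is monic and every deflation is epic $\Leftrightarrow$ (2). The only real obstacle is the ``inflation monic and deflation epic implies exact'' direction, which is not a routine diagram chase but is already established in the extriangulated-category literature, so the proof reduces to citing it. Everything else is a direct reading of (NE2) with $\mathbb{E}^{-1}=0$.
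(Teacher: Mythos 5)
Your proposal is correct and follows essentially the same route as the paper: both reduce (1) to the statement that every $\mathfrak{s}$-inflation is monic and every $\mathfrak{s}$-deflation is epic by reading off (NE2) with $\mathbb{E}^{-1}=0$, and both then cite \cite[Corollary 3.18]{NP19} for the equivalence of that condition with (2). No gaps.
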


\begin{proof}
In \cite[Corollary 3.18]{NP19}, it is shown that (2) is equivalent to the condition that for every $\mathfrak{s}$-conflation $A \xrightarrow{x} B \xrightarrow{y} C\dashrightarrow$, we obtain that $x$ is a monomorphism and $y$ is an epimorphism.

(1)$\Rightarrow$(2):
The long exact sequences in (NE2) clearly imply that $x$ is a monomorphism and $y$ is an epimorphism in $\mathcal{C}$. Thus (2) holds.

(2)$\Rightarrow$(1):
The sequences in (NE2) are exact for $\mathbb{E}^{-1}:= 0$ since every inflation is a monomorphism and every deflation is an epimorphism.
\end{proof}

As a byproduct, we give the following criterion for an extension-closed subcategory of a triangulated category to be an exact category, which is proved in \cite[Proposition 2.5]{J20} and \cite[Theorem]{D} in different ways.

\begin{corollary}
Let $\mathcal{D}$ be a triangulated category with shift functor $\Sigma$ and let $\mathcal{C}$ be an extension-closed subcategory of $\mathcal{D}$.
Assume that $\mathcal{D}(\mathcal{C},\Sigma^{-1}\mathcal{C}) = 0$ holds. Then $\mathcal{C}$ has a structure of an exact category, whose conflations are precisely triangles of $\mathcal{D}$ with the first three terms in $\mathcal{C}$.
\end{corollary}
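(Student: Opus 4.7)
The plan is to deduce this from \textbf{Proposition \ref{prop_exactcat}} by exhibiting $\mathbb{E}^{-1}=0$ as a legitimate negative first extension structure on $\mathcal{C}$. First I would equip $\mathcal{C}$ with the extriangulated structure inherited from $\mathcal{D}$ via Example \ref{ex_neg}(3) (using extension-closedness): the bifunctor is $\mathbb{E}(C,A)=\mathcal{D}(C,\Sigma A)$ restricted to $\mathcal{C}$, and the $\mathfrak{s}$-conflations are exactly the triangles $A\to B\to C\to \Sigma A$ of $\mathcal{D}$ with $A,B,C\in\mathcal{C}$. The natural negative first extension coming from Example \ref{ex_neg}(1) restricts to $\mathbb{E}^{-1}(C,A)=\mathcal{D}(C,\Sigma^{-1}A)$ on $\mathcal{C}$, and the hypothesis $\mathcal{D}(\mathcal{C},\Sigma^{-1}\mathcal{C})=0$ says this functor is identically zero. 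Hence $\mathbb{E}^{-1}=0$ is a negative first extension structure on $\mathcal{C}$.

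The main verification is that the two sequences in (NE2) remain exact when the $\mathbb{E}^{-1}$-terms are replaced by zero. For an $\mathfrak{s}$-conflation $A\xrightarrow{f}B\xrightarrow{g}C\dashrightarrow$ in $\mathcal{C}$, corresponding to a triangle $A\to B\to C\to \Sigma A$ in $\mathcal{D}$, and any $W\in\mathcal{C}$, the long exact sequence in $\mathcal{D}$ gives
\[
\mathcal{D}(W,\Sigma^{-1}C)\to \mathcal{C}(W,A)\xrightarrow{\mathcal{C}(W,f)} \mathcal{C}(W,B),
\]
whose first term vanishes by hypothesis, so $\mathcal{C}(W,f)$ is injective, which is precisely the exactness required with $\mathbb{E}^{-1}=0$. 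The dual sequence is handled symmetrically using $\mathcal{D}(C,\Sigma^{-1}W)=0$. Once this is in place, Proposition \ref{prop_exactcat} immediately tells us that the class of $\mathfrak{s}$-conflations defines an exact category structure on $\mathcal{C}$, and by the description of $\mathfrak{s}$-conflations recalled above, these are exactly the triangles of $\mathcal{D}$ whose first three terms lie in $\mathcal{C}$. The only subtlety is bookkeeping between the inherited extriangulated data and the triangulated data of $\mathcal{D}$; no deep new argument is needed.
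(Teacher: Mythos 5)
Your proposal is correct and follows essentially the same route as the paper: restrict the extriangulated structure and the negative first extension $\mathcal{D}(-,\Sigma^{-1}-)$ from $\mathcal{D}$ to the extension-closed subcategory $\mathcal{C}$, observe that the hypothesis makes this restricted $\mathbb{E}^{-1}$ vanish, and conclude by Proposition \ref{prop_exactcat}. The explicit check of (NE2) for $\mathbb{E}^{-1}=0$ is already packaged in Example \ref{ex_neg}(1) and (3), so no further argument is needed.
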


\begin{proof}
Since $\mathcal{C}$ is an extension-closed subcategory of a triangulated category $\mathcal{D}$, we regard $\mathcal{C}$ as an extriangulated category with a negative first extension  $\mathbb{E}^{-1}(-,-)|_{\mathcal{C}}:=\mathcal{D}(-, \Sigma^{-1}-)|_{\mathcal{C}}$ by Example \ref{ex_neg}(1) and (3).
On the other hand, the assumption $\mathcal{D}(\mathcal{C},\Sigma^{-1}\mathcal{C}) = 0$ implies $\mathbb{E}^{-1}(-,-)|_{\mathcal{C}}=0$.
Thus the assertion follows from Proposition \ref{prop_exactcat}.
\end{proof}

\section{$s$-torsion pairs in extriangulated categories with negative first extensions}
In this section, we study $s$-torsion pairs in an extriangulated category with a negative first extension.
In the following, let $\mathcal{C}=(\mathcal{C},\mathbb{E},\mathfrak{s},\mathbb{E}^{-1})$ be an extriangulated category with a negative first extension.
For a collection $\mathcal{X}$ of objects in $\mathcal{C}$, we define $\mathcal{X}^\perp := \{ C \in \mathcal{C} \mid \mathcal{C}(\mathcal{X},C) = 0 \}$ and $^\perp \mathcal{X} := \{ C \in \mathcal{C} \mid \mathcal{C}(C,\mathcal{X}) = 0 \}$.

\subsection{$s$-torsion pairs}
We introduce the notion of $s$-torsion pairs in an extriangulated category with a negative first extension.

\begin{definition}
Let $\mathcal{C}$ be an extriangulated category with a negative first extension.
We call a pair $(\mathcal{T},\mathcal{F})$ of subcategories of $\mathcal{C}$ an \emph{$s$-torsion pair} in $\mathcal{C}$ if it satisfies the following three conditions.
\begin{itemize}
\setlength{\itemindent}{20pt}
\item[(STP1)] $\mathcal{C}=\mathcal{T} \ast \mathcal{F}$.
\item[(STP2)] $\mathcal{C}(\mathcal{T}, \mathcal{F})=0$.
\item[(STP3)] $\mathbb{E}^{-1}(\mathcal{T}, \mathcal{F})=0$.
\end{itemize}
In this case, $\mathcal{T}$ (respectively, $\mathcal{F}$) is called a \emph{torsion class} (respectively, \emph{torsion-free class}) in $\mathcal{C}$.
\end{definition}

Let $\stors \mathcal{C}$ denote the set of $s$-torsion pairs.
We write $(\mathcal{T}_{1},\mathcal{F}_{1})\le (\mathcal{T}_{2},\mathcal{F}_{2})$ if $\mathcal{T}_{1}\subseteq\mathcal{T}_{2}$.
Then $(\stors \mathcal{C}, \le)$ clearly becomes a partially ordered set.

By the following proposition, a torsion-free class (respectively, torsion class) is uniquely determined by a torsion class (respectively, torsion-free class), that is, if $(\mathcal{T},\mathcal{F})$ and $(\mathcal{T},\mathcal{F}')$ are $s$-torsion pairs, then we have $\mathcal{F}=\mathcal{F}'$.

\begin{proposition}\label{prop_orth}
Let $(\mathcal{T}, \mathcal{F})$ be an $s$-torsion pair in $\mathcal{C}$.
Then the following statements hold.
\begin{itemize}
\item[(1)] $\mathcal{T}^{\perp}=\mathcal{F}$.
\item[(2)] ${}^{\perp}\mathcal{F}=\mathcal{T}$.
\end{itemize}
In particular, $\mathcal{T}$ and $\mathcal{F}$ are extension-closed subcategories which are closed under direct summands.
\end{proposition}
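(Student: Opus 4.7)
The plan is to establish (1) and (2) first using axioms (STP1)--(STP3) in combination with the long exact sequences from Proposition~\ref{prop_longex} and axiom (NE2), and then to deduce the ``in particular'' clause from (1) and (2) by a standard Yoneda-style argument.

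For (1), the inclusion $\mathcal{F} \subseteq \mathcal{T}^{\perp}$ is immediate from (STP2), so the content lies in the reverse inclusion. I would take $X \in \mathcal{T}^{\perp}$ and use (STP1) to fix an $\mathfrak{s}$-conflation $T \xrightarrow{f} X \xrightarrow{g} F \overset{\delta}{\dashrightarrow}$ with $T \in \mathcal{T}$ and $F \in \mathcal{F}$; it then suffices to prove $T \cong 0$, for once $T$ is zero the resulting conflation $0 \to X \xrightarrow{g} F \dashrightarrow$ has realizing class in $\mathbb{E}(F, 0) = 0$, whence $g$ is an isomorphism and $X \cong F \in \mathcal{F}$. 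To force $T \cong 0$ I apply the first long exact sequence of (NE2) with $W = T$ to extract
\[
\mathbb{E}^{-1}(T, F) \xrightarrow{(\delta^{-1}_{\sharp})_{T}} \mathcal{C}(T, T) \xrightarrow{\mathcal{C}(T, f)} \mathcal{C}(T, X),
\]
whose left term vanishes by (STP3) and whose right term vanishes by the hypothesis $X \in \mathcal{T}^{\perp}$, forcing $\mathcal{C}(T, T) = 0$ and hence $T \cong 0$. For (2) I would run the dual argument: given $X \in {}^{\perp}\mathcal{F}$ and the analogous conflation, apply the second long exact sequence of (NE2) with $W = F$ to
\[
\mathbb{E}^{-1}(T, F) \xrightarrow{(\delta^{\sharp}_{-1})_{F}} \mathcal{C}(F, F) \xrightarrow{\mathcal{C}(g, F)} \mathcal{C}(X, F),
\]
conclude $\mathcal{C}(F, F) = 0$ hence $F \cong 0$, and read off $X \cong T \in \mathcal{T}$.

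With (1) and (2) in hand, the ``in particular'' clause is a formal consequence of the standard fact that for any collection $\mathcal{X}$ both $\mathcal{X}^{\perp}$ and ${}^{\perp}\mathcal{X}$ are extension-closed and closed under direct summands; extension-closure follows by sandwiching $\mathcal{C}(M, -)$ (or $\mathcal{C}(-, M)$) between two zero groups in the Hom long exact sequences of Proposition~\ref{prop_longex}, and summand-closure is immediate from additivity of $\mathcal{C}$. The main obstacle, and the place where the newly introduced axiom (STP3) genuinely earns its keep, is precisely the step forcing $T \cong 0$ (resp.\ $F \cong 0$): in the abelian setting one would exploit that $f$ is a monomorphism, in the triangulated setting one would rotate a distinguished triangle, but neither tool is available in an arbitrary extriangulated category. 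Exactness of (NE2) together with the vanishing $\mathbb{E}^{-1}(\mathcal{T}, \mathcal{F}) = 0$ is exactly what patches the long exact sequence so as to squeeze the identity endomorphism of $T$ (resp.\ $F$) to zero.
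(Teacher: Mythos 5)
Your proposal is correct and follows essentially the same route as the paper: take the $\mathfrak{s}$-conflation from (STP1), apply the (NE2) exact sequence with $W=T$ (resp.\ $W=F$) and use (STP3) together with the perpendicularity hypothesis to kill $\mathcal{C}(T,T)$ (resp.\ $\mathcal{C}(F,F)$), forcing $T\cong 0$ (resp.\ $F\cong 0$). The only cosmetic difference is the last step: the paper concludes $C\cong F$ by observing that $\mathcal{C}(-,g)$ becomes a natural isomorphism and invoking Yoneda, whereas you note that the realizing class lies in $\mathbb{E}(F,0)=0$ so the conflation splits — both are immediate and equivalent.
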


\begin{proof}
We only prove (1); the proof of (2) is similar.
By (STP2), $\mathcal{T}^{\perp}\supseteq \mathcal{F}$ holds.
We show the converse inclusion.
Let $C \in \mathcal{T}^{\perp}$.
By (STP1), there exists an $\mathfrak{s}$-conflation
\[
\begin{tikzcd}
T\rar["f"] & C \rar["g"] & F \rar[dashrightarrow] & \
\end{tikzcd}
\]
such that $T \in \mathcal{T}$ and $F \in \mathcal{F}$.
Applying $\mathcal{C}(T,-)$ to the $\mathfrak{s}$-conflation gives an exact sequence
\[
\begin{tikzcd}
\mathbb{E}^{-1}(T, F) \rar & \mathcal{C}(T, T) \rar & \mathcal{C}(T,C).
\end{tikzcd}
\]
Since the left-hand side and the right-hand side vanish by (STP3) and $C \in \mathcal{T}^{\perp}$ respectively, we obtain $\mathcal{C}(T, T)=0$, and hence $T=0$.
This implies that the natural transformation $\mathcal{C}(-,g): \mathcal{C}(-,C) \to \mathcal{C}(-,F)$ is an isomorphism by Proposition \ref{prop_longex}.
Thus $C \cong F \in \mathcal{F}$ by Yoneda's lemma.
\end{proof}

As an immediate consequence, for $s$-torsion pairs $(\mathcal{T}_{1},\mathcal{F}_{1})$ and $(\mathcal{T}_{2},\mathcal{T}_{2})$, we obtain that $\mathcal{T}_{1}\subseteq \mathcal{T}_{2}$ if and only if $\mathcal{F}_{1}\supseteq \mathcal{F}_{2}$.

The term ``$s$'' in an $s$-torsion pair stands for ``shift-closed'' by the mean of the following lemma.

\begin{lemma}\label{lem_shift}
Let $\mathcal{D}$ be a triangulated category (regarded as the extriangulated category with the negative first extension by Example \ref{ex_neg}(1)).
Let $(\mathcal{T},\mathcal{F})$ be a pair of subcategories of $\mathcal{D}$ satisfying the conditions \textnormal{(STP1)} and \textnormal{(STP2)}.
Then the following statements are equivalent.
\begin{itemize}
\item[(1)] $(\mathcal{T},\mathcal{F})$ satisfies the condition \textnormal{(STP3)}.
\item[(2)] $\mathcal{T}$ is closed under a positive shift, that is, $\Sigma\mathcal{T}\subseteq \mathcal{T}$.
\item[(3)] $\mathcal{F}$ is closed under a negative shift, that is, $\mathcal{F}\subseteq\Sigma\mathcal{F}$.
\end{itemize}
\end{lemma}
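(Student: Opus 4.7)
The plan is to translate the vanishing of the negative first extension $\mathbb{E}^{-1}(\mathcal{T},\mathcal{F})$ in (1) into shift-closure statements via the adjunction $\Sigma \dashv \Sigma^{-1}$ of the triangulated category $\mathcal{D}$. Concretely, Example \ref{ex_neg}(1) identifies
\[
\mathbb{E}^{-1}(T,F) = \mathcal{D}(T, \Sigma^{-1} F) \cong \mathcal{D}(\Sigma T, F),
\]
so condition (1) is equivalent to each of $\Sigma\mathcal{T} \subseteq {}^{\perp}\mathcal{F}$ and $\Sigma^{-1}\mathcal{F} \subseteq \mathcal{T}^{\perp}$. I would set this reformulation up as the first step and use it throughout.

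Next I would dispose of the directions (2) $\Rightarrow$ (1) and (3) $\Rightarrow$ (1), both of which follow immediately from (STP2): if $\Sigma\mathcal{T} \subseteq \mathcal{T}$, then $\mathcal{D}(\Sigma\mathcal{T}, \mathcal{F}) \subseteq \mathcal{D}(\mathcal{T}, \mathcal{F}) = 0$, giving (1) via the adjunction, and dually under (3). No further work is needed here.

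For the converse directions (1) $\Rightarrow$ (2) and (1) $\Rightarrow$ (3), my plan is to invoke Proposition \ref{prop_orth}: once (1) $=$ (STP3) is granted together with (STP1) and (STP2), the pair $(\mathcal{T}, \mathcal{F})$ is a genuine $s$-torsion pair, so that proposition supplies ${}^{\perp}\mathcal{F} = \mathcal{T}$ and $\mathcal{T}^{\perp} = \mathcal{F}$. Substituting these into the reformulations of (1) from the first paragraph yields $\Sigma\mathcal{T} \subseteq \mathcal{T}$ and $\Sigma^{-1}\mathcal{F} \subseteq \mathcal{F}$, the latter being (3) after applying the auto-equivalence $\Sigma$. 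I do not anticipate any serious obstacle: the argument is essentially bookkeeping, with the only mild care being to toggle between $\Sigma^{-1}\mathcal{F} \subseteq \mathcal{F}$ and $\mathcal{F} \subseteq \Sigma\mathcal{F}$ so as to match the form of (3) as stated.
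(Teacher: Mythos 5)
Your proof is correct and follows essentially the same route as the paper: rewrite (STP3) via the shift adjunction as $\Sigma\mathcal{T}\subseteq{}^{\perp}\mathcal{F}$ (resp.\ $\Sigma^{-1}\mathcal{F}\subseteq\mathcal{T}^{\perp}$), use (STP2) for the easy implications, and invoke Proposition \ref{prop_orth} for the converses. If anything, your version is slightly more careful than the paper's, which cites Proposition \ref{prop_orth}(2) for both directions even though the implication (2)$\Rightarrow$(1) only needs $\mathcal{T}\subseteq{}^{\perp}\mathcal{F}$ from (STP2).
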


\begin{proof}
We only prove (1)$\Leftrightarrow$(2); the proof of (1)$\Leftrightarrow$(3) is similar.
By Example \ref{ex_neg}(1), we have $\mathbb{E}^{-1}(\mathcal{T},\mathcal{F})\cong \mathcal{D}(\Sigma \mathcal{T},\mathcal{F})$.
Hence (STP3) holds if and only if $\Sigma\mathcal{T}\subseteq {}^{\perp}\mathcal{F}$.
The assertion follows from Proposition \ref{prop_orth}(2).
\end{proof}

The following examples show that $s$-torsion pairs are a common generalization of $t$-structures on triangulated categories and torsion pairs in exact categories.

\begin{example}\label{ex_tstr}
Let $\mathcal{D}$ be a triangulated category. A pair $(\mathcal{U},\mathcal{V})$ of subcategories of $\mathcal{D}$ is called a \emph{$t$-structure} on $\mathcal{D}$ if it satisfies the following three conditions.
\begin{itemize}
\item[$\bullet$] $\mathcal{D}=\mathcal{U}\ast\mathcal{V}$, that is, for each $D\in\mathcal{D}$, there exists a triangle $U\to D\to V\to \Sigma U$ such that $U\in \mathcal{U}$ and $V\in\mathcal{V}$.
\item[$\bullet$] $\mathcal{D}(\mathcal{U},\mathcal{V})=0$.
\item[$\bullet$] $\mathcal{U}$ is closed under a positive shift.
\end{itemize}
It is a well-known result \cite{BBD81} that the heart $\mathcal{U}\cap \Sigma\mathcal{V}$ of a $t$-structure $(\mathcal{U},\mathcal{V})$ is always an abelian category.
By regarding $\mathcal{D}$ as the extriangulated category with the negative first extension (see Example \ref{ex_neg}(1)), it follows from Lemma \ref{lem_shift} that $t$-structures on  $\mathcal{D}$ are exactly $s$-torsion pairs in $\mathcal{D}$.
\end{example}

\begin{example}\label{ex_extor}
Let $\mathcal{E}$ be an exact category. A pair $(\mathcal{T},\mathcal{F})$ of subcategories of $\mathcal{E}$ is called a \emph{torsion pair} in $\mathcal{E}$ if it satisfies the following two conditions.
\begin{itemize}
\item[$\bullet$] $\mathcal{E}=\mathcal{T}\ast\mathcal{F}$, that is, for each $E\in \mathcal{E}$, there exists a conflation $0\to T\to E\to F\to 0$ such that $T\in\mathcal{T}$ and $F\in\mathcal{F}$.
\item[$\bullet$] $\mathcal{E}(\mathcal{T},\mathcal{F})=0$.
\end{itemize}
Let $\tors\mathcal{E}$ denote the poset of torsion pairs in $\mathcal{E}$, where we define $(\mathcal{T}_{1},\mathcal{F}_{1}) \leq (\mathcal{T}_{2},\mathcal{F}_{2})$ if $\mathcal{T}_{1} \subseteq \mathcal{T}_{2}$.
By regarding $\mathcal{E}$ as the extriangulated category with the negative first extension (see Example \ref{ex_neg}(2)), it follows from $\mathbb{E}^{-1}=0$ that torsion pairs in the exact category $\mathcal{E}$ are exactly $s$-torsion pairs  in $\mathcal{E}$, that is, we have $\tors\mathcal{E} = \stors\mathcal{E}$.
\end{example}

Taking negative first extension structures different from Example \ref{ex_neg}(2), we give an example which satisfies (STP1) and (STP2) but does not satisfy (STP3).

\begin{example}
Let $\Lambda$ and $\mathcal{A}$ be in Example \ref{ex_nakayama}.
Due to Example \ref{ex_nakayama}, we regard $\mathcal{A}$ as the extriangulated category with the negative first extension $\mathbb{E}_{2}^{-1}$.
Since $\mathcal{A}$ is an abelian category, a pair of subcategories $(\mathcal{T},\mathcal{F})$ satisfies (STP1) and (STP2) if and only if it is a (usual) torsion pair in the abelian category $\mathcal{A}$.
Thus, $(\operatorname{\mathsf{add}}\{\substack{2\\1},\substack{2}\}, \operatorname{\mathsf{add}}\{\substack{1}\})$ satisfies (STP1) and (STP2).
On the other hand, since $\mathbb{E}_{2}^{-1}(\substack{2},\substack{1}) \neq 0$ holds, this pair does not satisfy (STP3).
\end{example}

We show that the $\mathfrak{s}$-conflation in (STP1) is unique up to isomorphism.
As a result, we have right and left adjoint functors of inclusion functors, which are generalizations of torsion radicals for the usual torsion pairs and truncation functors for $t$-structures.

\begin{proposition}\label{prop_canseq}
Let $(\mathcal{T},\mathcal{F})$ be an $s$-torsion pair in $\mathcal{C}$. For each $C \in \mathcal{C}$, there uniquely exists an $\mathfrak{s}$-conflation
\begin{equation}\label{canseq}
\begin{tikzcd}
T_{C}\rar["\iota_{C}"] & C \rar["{\pi^{C}}"] & F^{C} \rar["\delta", dashrightarrow] & \
\end{tikzcd}
\end{equation}
with $T_{C} \in \mathcal{T}$ and $F^{C} \in \mathcal{F}$ (up to isomorphism of $\mathfrak{s}$-conflations).
Moreover the assignment $C \mapsto T_{C}$ and $C \mapsto F^{C}$ induce functors $T_{(-)} : \mathcal{C} \to \mathcal{T}$ and $F^{(-)} : \mathcal{C} \to \mathcal{F}$ which are right adjoint and left adjoint of the inclusion functors $\mathcal{T} \rightarrow \mathcal{C}$ and $\mathcal{F} \rightarrow \mathcal{C}$ respectively.
\end{proposition}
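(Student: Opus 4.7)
The plan is to set up everything through the two long exact sequences in Proposition \ref{prop_longex} and (NE2), with (STP2) and (STP3) ensuring that two adjacent Hom/Ext groups vanish and hence that certain connecting maps become bijections.

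First, I would establish existence of the conflation \eqref{canseq} for each $C\in\mathcal{C}$: this is immediate from (STP1). Next, for uniqueness and functoriality I would prove one key lifting principle: for any $T\in\mathcal{T}$ and any $\mathfrak{s}$-conflation $T_C \xrightarrow{\iota_C} C \xrightarrow{\pi^C} F^C \dashrightarrow$ of the form \eqref{canseq}, the natural map
\[
\mathcal{C}(T, T_C) \longrightarrow \mathcal{C}(T, C)
\]
induced by $\iota_C$ is a bijection. Indeed, applying $\mathcal{C}(T,-)$ to \eqref{canseq} yields the exact sequence
\[
\mathbb{E}^{-1}(T, F^C) \to \mathcal{C}(T, T_C) \to \mathcal{C}(T, C) \to \mathcal{C}(T, F^C)
\]
from (NE2) glued with Proposition \ref{prop_longex}, and the two outer terms vanish by (STP3) and (STP2) respectively. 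Dually, $\mathcal{C}(\pi^C,F) : \mathcal{C}(F^C, F) \to \mathcal{C}(C, F)$ is a bijection for every $F\in\mathcal{F}$.

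Using the first bijection, I would deduce uniqueness of \eqref{canseq}: given another $\mathfrak{s}$-conflation $T'\xrightarrow{\iota'} C \xrightarrow{\pi'} F'\dashrightarrow$ with $T'\in\mathcal{T}$ and $F'\in\mathcal{F}$, the morphism $\iota_C: T_C\to C$ lifts uniquely through $\iota'$ to $a: T_C\to T'$, and symmetrically $\iota': T'\to C$ lifts uniquely through $\iota_C$ to $a': T'\to T_C$. Uniqueness of the lifts forces $a'a = \mathsf{id}_{T_C}$ and $aa' = \mathsf{id}_{T'}$. Then from the morphism of $\mathfrak{s}$-conflations with $a$ on the left and $\mathsf{id}_C$ in the middle, the induced map $b: F^C\to F'$ between cokernel-like terms (obtained e.g. via (ET3) from \cite{NP19}) is forced to be an isomorphism as well, giving the required isomorphism of conflations. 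Functoriality of $T_{(-)}$ and $F^{(-)}$ follows from exactly the same lifting principle applied to $\iota_{C'}\circ (-) : T_C\to C\to C'$, and the adjunction isomorphisms $\mathcal{C}(T, C) \cong \mathcal{T}(T, T_C)$ and $\mathcal{C}(C, F) \cong \mathcal{F}(F^C, F)$ are precisely the two bijections established above, now viewed as natural in both variables.

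The step that is least mechanical is pinning down the isomorphism on the right-hand term $F^C\to F'$ in the uniqueness argument, since the conflation data must be transported correctly; this is where one needs to invoke the extriangulated axioms (ET3) and the fact that an isomorphism in the middle with appropriate matching on the left forces an isomorphism on the right. Everything else is a direct application of the two exact sequences together with the vanishing conditions (STP2) and (STP3).
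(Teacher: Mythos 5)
Your proposal is correct and follows essentially the same route as the paper: the key bijection $\mathcal{C}(T,T_C)\xrightarrow{\sim}\mathcal{C}(T,C)$ obtained from the exact sequence with outer terms killed by (STP3) and (STP2), uniqueness via mutually inverse lifts together with (ET3) (the paper invokes \cite[Corollary 3.6]{NP19} for the induced isomorphism on the third term, or alternatively the dual bijection on $\mathcal{F}$), and the adjunctions read off from the same natural bijections via Yoneda.
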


\begin{proof}
Let $C \in \mathcal{C}$.
By (STP1), we take an $\mathfrak{s}$-conflation $T \xrightarrow{\iota} C \xrightarrow{\pi} F \dashrightarrow$ such that $T \in \mathcal{T}$ and $F\in \mathcal{F}$.
By (STP2) and (STP3), we have an isomorphism
\[
\begin{tikzcd}
0=\mathbb{E}^{-1}(T', F) \rar & \mathcal{C}(T', T) \rar["{\mathcal{C}(T', \iota)}"] &  \mathcal{C}(T', C) \rar & \mathcal{C}(T',F)=0
\end{tikzcd}
\]
for each $T' \in \mathcal{T}$.
This implies that the natural transformation $\mathcal{C}(-,\iota) :\mathcal{T}(-,T) \to \mathcal{C}(-,C)|_{\mathcal{T}}$ between two functors on $\mathcal{T}$ is an isomorphism.

We show the uniqueness of \eqref{canseq}.
We take two $\mathfrak{s}$-conflations
\[
\begin{tikzcd}[row sep = 0]
T_{1}\rar["\iota_{1}"] & C \rar["\pi_{1}"] & F_{1}\rar["\delta_1", dashrightarrow] &, \ \\
T_{2}\rar["\iota_{2}"] & C \rar["\pi_{2}"] & F_{2}\rar["\delta_2", dashrightarrow] & \
\end{tikzcd}
\]
with $T_{1}, T_{2} \in \mathcal{T}$ and $F_{1}, F_{2} \in \mathcal{F}$.
Since $\mathcal{C}(-,\iota_{i}) : \mathcal{T}(-,T_{i}) \to \mathcal{C}(-,C)|_{\mathcal{T}}$ is a natural isomorphism for $i=1,2$, there is an isomorphism $\varphi : T_{1} \rightarrow T_{2}$ satisfying $\iota_{2}\varphi = \iota_{1}$. By the axiom (ET3) in \cite[Definition 2.12]{NP19}, we obtain a morphism of $\mathfrak{s}$-conflations
\[
\begin{tikzcd}
T_1 \rar["\iota_{1}"] \dar["\varphi"', "\cong"] & C \dar[equal] \rar["\pi_{1}"] & F_{1} \rar[dashed, "{\delta_{1}}"] \dar["\psi"] & \ \\
T_2 \rar["\iota_{2}"] & C \rar["\pi_{2}"] & F_{2} \rar[dashed, "{\delta_{2}}"] &\rlap{.} \
\end{tikzcd}
\]
Then \cite[Corollary 3.6]{NP19} implies that $\psi$ is an isomorphism.
Note that this isomorphism also follows from a natural isomorphism $\mathcal{C}(\pi_{i},-) :\mathcal{F}(F_{i},-) \to \mathcal{C}(C,-)|_{\mathcal{F}}$ of functors on $\mathcal{F}$ for $i=1,2$.
Thus these two $\mathfrak{s}$-conflations are isomorphic to each other.

For each $C \in \mathcal{C}$, fix an $\mathfrak{s}$-conflation (\ref{canseq}).
Since we have a natural isomorphism $\mathcal{C}(-,\iota_{C}) :\mathcal{T}(-,T_{C}) \to \mathcal{C}(-,C)|_{\mathcal{T}}$ on $\mathcal{T}$ for each $C$, Yoneda's lemma immediately implies that $C \mapsto T_{C}$ gives a functor $T_{(-)} : \mathcal{C} \to \mathcal{T}$. Moreover, it is clear from this natural isomorphism that the functor $T_{(-)}$ is a right adjoint functor of the inclusion functor $\mathcal{T} \rightarrow \mathcal{C}$.
Similarly, we can show that the functor $F^{(-)}: \mathcal{C}\to \mathcal{F}$ is a left adjoint functor of the inclusion functor $\mathcal{F} \to \mathcal{C}$.
\end{proof}

\subsection{Intervals and hearts of $s$-torsion pairs}
In this subsection, we give a common generalization of the isomorphism (Theorem \ref{intro_thm1}) induced by HRS-tilt of $t$-structures on a triangulated category and the isomorphism (Theorem \ref{intro_thm2}) via torsion pairs in an abelian category.

The following notion plays an important role in this paper.
\begin{definition}
Let $\mathcal{C}$ be an extriangulated category with a negative first extension.
For $i=1,2$, let $t_{i}:=(\mathcal{T}_{i},\mathcal{F}_{i})$ $\in \stors \mathcal{C}$ with $t_{1}\le t_{2}$.
Then we call the subposet
\begin{align}
\stors [t_{1}, t_{2}]:=\{t=(\mathcal{T}, \mathcal{F})\in \stors \mathcal{C} \mid t_{1} \le t \le t_{2}\} \subseteq \stors\mathcal{C}\notag
\end{align}
an \emph{interval} in $\stors \mathcal{C}$ and the subcategory $\mathcal{H}_{[t_{1},t_{2}]} := \mathcal{T}_{2} \cap \mathcal{F}_{1}\subseteq \mathcal{C}$ the \emph{heart} of the interval $\stors [t_{1},t_{2}]$.
Since $\mathcal{H}_{[t_{1},t_{2}]}$ is extension-closed, we can regard $\mathcal{H}_{[t_{1},t_{2}]}$ as the extriangulated category with the negative first extension (see Example \ref{ex_neg}(3)).
\end{definition}

By Example \ref{ex_tstr}, $t$-structures on a triangulated category $\mathcal{D}$ are exactly $s$-torsion pairs in the extriangulated category $\mathcal{D}$.
We can easily check that the heart of a $t$-structure $(\mathcal{U},\mathcal{V})$ on $\mathcal{D}$ coincides with the heart of the interval $\stors[(\Sigma\mathcal{U},\Sigma\mathcal{V}),(\mathcal{U},\mathcal{V})]$.
For an abelian category $\mathcal{A}$, the notion of hearts of intervals in $\tors\mathcal{A}$ was used implicitly in \cite{J15} to study $\tau$-tilting reduction, and in \cite{DIRRT,AP} to study the lattice structure and the brick labeling of $\tors\mathcal{A}$.
The terminology \emph{heart} was introduced in \cite{T}.
Moreover, this notion plays a crucial role in the study of subcategories of abelian categories closed under images, cokernels and extensions in \cite{ES}.

Now we state a main result of this paper.

\begin{theorem}\label{mainthm}
Let $\mathcal{C}$ be an extriangulated category with a negative first extension.
For $i=1,2$, let $t_{i}:=(\mathcal{T}_{i},\mathcal{F}_{i})\in \stors\mathcal{C}$ with $t_{1}\le t_{2}$.
Then there exist mutually inverse isomorphisms of posets
\[
\begin{tikzcd}
\stors {[t_{1}, t_{2}]} \rar[shift left, "\Phi"] & \stors\mathcal{H}_{[t_{1}, t_{2}]}, \lar[shift left, "\Psi"]
\end{tikzcd}
\]
where $\Phi(\mathcal{T},\mathcal{F}):=(\mathcal{T}\cap\mathcal{F}_{1}, \mathcal{T}_{2} \cap \mathcal{F})$ and $\Psi(\mathcal{X},\mathcal{Y}):=(\mathcal{T}_{1}\ast\mathcal{X}, \mathcal{Y} \ast \mathcal{F}_{2})$.
In particular, $\Phi$ and $\Psi$ preserve hearts, that is, for $\stors [t, t'] \subseteq \stors[t_{1}, t_{2}]$ and $\stors [x, x'] \subseteq \stors\mathcal{H}_{[t_{1}, t_{2}]}$, we have $\mathcal{H}_{[t, t']}=\mathcal{H}_{[\Phi(t),\Phi(t')]}$ and $\mathcal{H}_{[x,x']}=\mathcal{H}_{[\Psi(x), \Psi(x')]}$.
\end{theorem}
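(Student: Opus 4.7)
The strategy has four parts: well-definedness of the two maps, the identities $\Phi \circ \Psi = \operatorname{id}$ and $\Psi \circ \Phi = \operatorname{id}$, monotonicity, and preservation of hearts. The workhorse throughout is the pair of six-term exact sequences attached to any $\mathfrak{s}$-conflation (Proposition \ref{prop_longex} together with (NE2)). A useful preliminary observation is the \emph{sandwich}: for any $(\mathcal{T}, \mathcal{F}) \in \stors[t_1, t_2]$, Proposition \ref{prop_orth} gives both $\mathcal{T}_1 \subseteq \mathcal{T} \subseteq \mathcal{T}_2$ and $\mathcal{F}_2 \subseteq \mathcal{F} \subseteq \mathcal{F}_1$, which lets us upgrade vanishings known for $t_1$ or $t_2$ to vanishings for arbitrary $(\mathcal{T}, \mathcal{F})$ in the interval.

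Well-definedness of $\Phi$ is the easier direction. Given $C \in \mathcal{H}_{[t_1, t_2]}$, take the canonical $\mathfrak{s}$-conflation $T \to C \to F \dashrightarrow$ provided by $(\mathcal{T}, \mathcal{F})$. For any $T_1 \in \mathcal{T}_1$ the sequence
\[
\mathbb{E}^{-1}(T_1, F) \longrightarrow \mathcal{C}(T_1, T) \longrightarrow \mathcal{C}(T_1, C)
\]
has both outer terms zero (using (STP3) of $t_1$ for the left and $C \in \mathcal{F}_1$ for the right), so $T \in \mathcal{F}_1$. Dually $F \in \mathcal{T}_2$, and the conditions (STP2) and (STP3) for $\Phi(\mathcal{T}, \mathcal{F})$ are inherited from $(\mathcal{T}, \mathcal{F})$.

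Well-definedness of $\Psi$ is the main obstacle. The conditions (STP2) and (STP3) for $\Psi(\mathcal{X}, \mathcal{Y}) = (\mathcal{T}_1 \ast \mathcal{X}, \mathcal{Y} \ast \mathcal{F}_2)$ reduce, via the long exact sequences applied to defining conflations of objects in $\mathcal{T}_1 \ast \mathcal{X}$ and $\mathcal{Y} \ast \mathcal{F}_2$, to four pairwise vanishings on $(\mathcal{T}_1, \mathcal{Y})$, $(\mathcal{T}_1, \mathcal{F}_2)$, $(\mathcal{X}, \mathcal{Y})$, $(\mathcal{X}, \mathcal{F}_2)$, each of which is already known from $t_1$, $t_2$, or from $(\mathcal{X}, \mathcal{Y})$ via the sandwich. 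The delicate point is (STP1): given $C \in \mathcal{C}$, I need to produce a single $\mathfrak{s}$-conflation $A \to C \to B \dashrightarrow$ with $A \in \mathcal{T}_1 \ast \mathcal{X}$ and $B \in \mathcal{Y} \ast \mathcal{F}_2$. The plan is to build a four-step filtration of $C$ with successive quotients in $\mathcal{T}_1, \mathcal{X}, \mathcal{Y}, \mathcal{F}_2$---first decompose $C$ by $t_1$ to get $T_1 \to C \to F \dashrightarrow$ with $F \in \mathcal{F}_1$, then decompose $F$ by $t_2$ to get $T' \to F \to F_2 \dashrightarrow$ (a long-exact-sequence argument using $\mathbb{E}^{-1}(\mathcal{T}_1, \mathcal{F}_1) = 0$ shows $T' \in \mathcal{H}_{[t_1, t_2]}$), and finally decompose $T'$ by $(\mathcal{X}, \mathcal{Y})$---and then to regroup the middle two layers into one conflation defining $A$ and the outer two into one conflation defining $B$, via iterated applications of the axioms (ET4) and (ET4)$^{\mathrm{op}}$. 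The containments $\mathcal{T}_1 \subseteq \mathcal{T}_1 \ast \mathcal{X} \subseteq \mathcal{T}_2$ are formal ($0 \in \mathcal{X}$ and $\mathcal{T}_2$ is extension-closed), placing $\Psi(\mathcal{X}, \mathcal{Y})$ in $\stors[t_1, t_2]$.

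For the inverse identities, $\Phi \circ \Psi = \operatorname{id}$ reduces to proving $(\mathcal{T}_1 \ast \mathcal{X}) \cap \mathcal{F}_1 = \mathcal{X}$: if $A$ lies in the left-hand side with conflation $T_1' \to A \to X' \dashrightarrow$, the sequence $\mathbb{E}^{-1}(T_1', X') \to \mathcal{C}(T_1', T_1') \to \mathcal{C}(T_1', A)$ has both outer terms zero, forcing $T_1' = 0$ and $A \cong X'$. Similarly $\Psi \circ \Phi = \operatorname{id}$ reduces to $\mathcal{T}_1 \ast (\mathcal{T} \cap \mathcal{F}_1) = \mathcal{T}$: for $T \in \mathcal{T}$ with $t_1$-decomposition $T_1' \to T \to H' \dashrightarrow$ one has $H' \in \mathcal{F}_1$, and the sequence $\mathbb{E}^{-1}(T_1', F) \to \mathcal{C}(H', F) \to \mathcal{C}(T, F)$ for $F \in \mathcal{F}$ shows $H' \in {}^{\perp} \mathcal{F} = \mathcal{T}$. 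Monotonicity of both maps is immediate from the formulas. Heart preservation for $\Phi$ on $[t, t'] \subseteq [t_1, t_2]$ follows because the sandwich inequalities $\mathcal{T}' \subseteq \mathcal{T}_2$ and $\mathcal{F} \subseteq \mathcal{F}_1$ collapse $\mathcal{T}' \cap \mathcal{F}_1 \cap \mathcal{T}_2 \cap \mathcal{F}$ to $\mathcal{T}' \cap \mathcal{F}$; heart preservation for $\Psi$ then follows formally by applying heart preservation for $\Phi$ to the interval $[\Psi(x), \Psi(x')]$ together with $\Phi \circ \Psi = \operatorname{id}$.
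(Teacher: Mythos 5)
Your proposal is correct and follows essentially the same route as the paper: the decomposition facts you derive inline (that the $(\mathcal{T},\mathcal{F})$-truncation of an object of $\mathcal{H}_{[t_1,t_2]}$ lands in $\mathcal{T}\cap\mathcal{F}_1$ and $\mathcal{T}_2\cap\mathcal{F}$, and that $\mathcal{T}=\mathcal{T}_1\ast(\mathcal{T}\cap\mathcal{F}_1)$, $\mathcal{F}_1=(\mathcal{T}_2\cap\mathcal{F}_1)\ast\mathcal{F}_2$) are exactly the paper's key Lemma \ref{lem_heart}, your four-step filtration regrouped by (ET4)/(ET4)$^{\mathrm{op}}$ is the paper's associativity computation $(\mathcal{T}_1\ast\mathcal{X})\ast(\mathcal{Y}\ast\mathcal{F}_2)=(\mathcal{T}_1\ast\mathcal{H}_{[t_1,t_2]})\ast\mathcal{F}_2=\mathcal{C}$, and the inverse identities and heart preservation are handled the same way. (One verbal slip: the regrouping should pair the \emph{bottom} two layers $\mathcal{T}_1,\mathcal{X}$ for $A$ and the \emph{top} two $\mathcal{Y},\mathcal{F}_2$ for $B$, not ``middle'' and ``outer''; your stated targets make clear this is what you meant.)
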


Figure \ref{fig} illustrates the relation between various subcategories of $\mathcal{C}$ appearing in the statement of Theorem \ref{mainthm}.

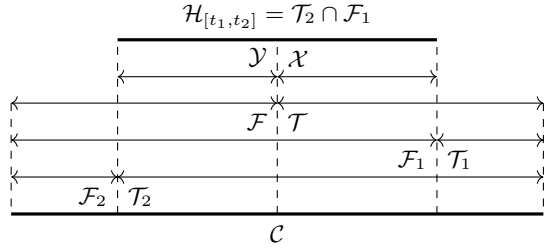
\begin{figure}[htp]
\begin{tikzpicture}[scale = 0.7, every node/.style={font=\footnotesize}]
\draw[very thick] (-5,0)-- (5,0) node[midway, below] {$\mathcal{C}$};
\draw[dashed] (-5,2.1) -- (-5,0);
\draw[dashed] (5,2.1) -- (5,0);
\draw[very thick] (-3,3.3)-- (3,3.3)
node[midway, above] {$\mathcal{H}_{[t_1,t_2]} = \mathcal{T}_2 \cap \mathcal{F}_1$};
\draw[dashed] (-3,0) -- (-3,3.3);
\draw[<->] (5,0.7) -- (-3,0.7) node[below right] {$\mathcal{T}_2$};
\draw[<->] (-5,0.7) -- (-3,0.7) node[below left] {$\mathcal{F}_2$};
\draw[dashed] (3,0) -- (3,3.3);
\draw[<->] (5,1.4) -- (3,1.4) node[below right] {$\mathcal{T}_1$};
\draw[<->] (-5,1.4) -- (3,1.4) node[below left] {$\mathcal{F}_1$};
\draw[dashed] (0,0) -- (0,3.3);
\draw[<->] (5,2.1) -- (0,2.1) node[below right] {$\mathcal{T}$};
\draw[<->] (-5,2.1) -- (0,2.1) node[below left] {$\mathcal{F}$};
\draw[<->] (3,2.6) -- (0,2.6) node[above right] {$\mathcal{X}$};
\draw[<->] (-3,2.6) -- (0,2.6) node[above left] {$\mathcal{Y}$};
\end{tikzpicture}
\caption{Illustration of Theorem \ref{mainthm}}\label{fig}
\end{figure}

Before proving Theorem \ref{mainthm}, we give a remark on brick labeling.

\begin{remark}
Let $\mathcal{A}$ be a length abelian category and let $t_{1}\le t_{2}\in\tors\mathcal{A}$.
By a similar argument of \cite[Theorem 3.3(b)]{DIRRT}, we can show that there exists an arrow $a:t\to t'$ in the Hasse quiver of $\stors\mathcal{H}_{[t_{1},t_{2}]}$ if and only if there exists a unique brick $S_{a}$ in $\mathcal{H}_{[t_{1},t_{2}]}$ (up to isomorphism).
Hence we can introduce brick labeling in the poset of $s$-torsion pairs in the heart $\mathcal{H}_{[t_{1},t_{2}]}$.
\end{remark}

In the following, we give a proof of Theorem \ref{mainthm}.
Let $\mathcal{C}$ be an extriangulated category with a negative first extension and fix two $s$-torsion pairs $t_{1}:=(\mathcal{T}_{1},\mathcal{F}_{1})\le t_{2}:=(\mathcal{T}_{2},\mathcal{F}_{2})$ in $\mathcal{C}$.
We start with giving the following lemma, which is frequently used in the rest of this paper.

\begin{lemma}\label{lem_heart}
Let $(\mathcal{T}',\mathcal{F}')\leq (\mathcal{T},\mathcal{F})\in\stors \mathcal{C}$.
Then the following statements hold.
\begin{itemize}
\item[(1)] $\mathcal{T} = \mathcal{T}' \ast (\mathcal{T} \cap \mathcal{F}')$.
\item[(2)] $\mathcal{F}' = (\mathcal{T}\cap \mathcal{F}') \ast \mathcal{F}$.
\end{itemize}
\end{lemma}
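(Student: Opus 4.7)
The plan is to prove the two statements by parallel arguments: each reduces to taking a canonical $\mathfrak{s}$-conflation from the "other" $s$-torsion pair via Proposition \ref{prop_canseq}, and then using the long exact sequences of Proposition \ref{prop_longex} together with the ones from (NE2) to place the new middle term into the required intersection. Throughout I will use the remark after Proposition \ref{prop_orth} that $\mathcal{T}' \subseteq \mathcal{T}$ is equivalent to $\mathcal{F} \subseteq \mathcal{F}'$, and that $\mathcal{T} = {}^{\perp}\mathcal{F}$, $\mathcal{F}' = (\mathcal{T}')^{\perp}$.

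For (1), I would take $C \in \mathcal{T}$ and apply Proposition \ref{prop_canseq} to the $s$-torsion pair $(\mathcal{T}',\mathcal{F}')$ to obtain an $\mathfrak{s}$-conflation $T' \to C \to F' \dashrightarrow$ with $T' \in \mathcal{T}'$ and $F' \in \mathcal{F}'$. Since $T' \in \mathcal{T}' \subseteq \mathcal{T}$, it suffices to verify $F' \in \mathcal{T}$, i.e.\ $\mathcal{C}(F',W)=0$ for every $W \in \mathcal{F}$. For such $W$, splicing the covariant long exact sequences from Proposition \ref{prop_longex} and (NE2) yields the exact piece
\begin{equation*}
\mathbb{E}^{-1}(T',W) \longrightarrow \mathcal{C}(F',W) \longrightarrow \mathcal{C}(C,W).
\end{equation*}
The right-hand term vanishes by (STP2) for $(\mathcal{T},\mathcal{F})$ since $C \in \mathcal{T}$, and the left-hand term vanishes by (STP3) for $(\mathcal{T}',\mathcal{F}')$ because $W \in \mathcal{F} \subseteq \mathcal{F}'$. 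Hence $F' \in \mathcal{T} \cap \mathcal{F}'$, giving $C \in \mathcal{T}' \ast (\mathcal{T} \cap \mathcal{F}')$. The reverse inclusion $\mathcal{T}' \ast (\mathcal{T} \cap \mathcal{F}') \subseteq \mathcal{T}$ is immediate from $\mathcal{T}' \subseteq \mathcal{T}$ and the extension-closedness of $\mathcal{T}$ (Proposition \ref{prop_orth}).

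For (2), I would dualize: take $C \in \mathcal{F}'$ and apply Proposition \ref{prop_canseq} to the larger pair $(\mathcal{T},\mathcal{F})$ to get $T \to C \to F \dashrightarrow$ with $T \in \mathcal{T}$ and $F \in \mathcal{F} \subseteq \mathcal{F}'$. It then suffices to show $T \in \mathcal{F}'$, i.e.\ $\mathcal{C}(W,T)=0$ for every $W \in \mathcal{T}'$. The contravariant version of (NE2) together with Proposition \ref{prop_longex} supplies
\begin{equation*}
\mathbb{E}^{-1}(W,F) \longrightarrow \mathcal{C}(W,T) \longrightarrow \mathcal{C}(W,C),
\end{equation*}
and the outer terms vanish by (STP3) for $(\mathcal{T},\mathcal{F})$ (using $W \in \mathcal{T}' \subseteq \mathcal{T}$ and $F \in \mathcal{F}$) and by (STP2) for $(\mathcal{T}',\mathcal{F}')$ (using $W \in \mathcal{T}'$ and $C \in \mathcal{F}'$). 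Thus $T \in \mathcal{T} \cap \mathcal{F}'$ and $C \in (\mathcal{T} \cap \mathcal{F}') \ast \mathcal{F}$. The reverse inclusion follows from $\mathcal{F} \subseteq \mathcal{F}'$ and the extension-closedness of $\mathcal{F}'$.

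The main conceptual point, and the only place where the negative first extension is genuinely used, is recognizing that the vanishing at the left end of the two displayed three-term sequences forces the middle term to embed into the outer term (rather than merely map to it), and that the relevant vanishing of $\mathbb{E}^{-1}$ comes from applying (STP3) to the \emph{smaller} torsion pair against the \emph{larger} torsion-free class. Beyond that, the argument is a mechanical application of the two long exact sequences, and I do not foresee a real obstacle.
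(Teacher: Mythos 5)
Your proof is correct and follows essentially the same route as the paper: decompose $C$ via the other $s$-torsion pair and use the three-term exact sequence $\mathbb{E}^{-1}(T',W)\to\mathcal{C}(F',W)\to\mathcal{C}(C,W)$ together with Proposition \ref{prop_orth} to locate the cokernel term (the paper writes out only part (1) and notes (2) is dual, exactly as you do). The only cosmetic difference is that for the vanishing of the $\mathbb{E}^{-1}$ term you invoke (STP3) for the smaller pair via $W\in\mathcal{F}\subseteq\mathcal{F}'$, while the paper invokes it for the larger pair via $T'\in\mathcal{T}'\subseteq\mathcal{T}$; both are valid.
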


\begin{proof}
We only prove (1); the proof of (2) is similar.
Since $\mathcal{T}$ is extension-closed, we have $\mathcal{T} \supseteq \mathcal{T}' \ast (\mathcal{T} \cap \mathcal{F}')$.
We prove the converse inclusion.
Let $C \in \mathcal{T}$.
By $\mathcal{C} = \mathcal{T}' \ast \mathcal{F}'$, there exists an $\mathfrak{s}$-conflation $T'\rightarrow C\rightarrow F'\dashrightarrow$ such that $T' \in \mathcal{T}'$ and $F' \in \mathcal{F}'$.
It is enough to show that $F' \in \mathcal{T}$.
For each $F \in \mathcal{F}$, we obtain an exact sequence
\[
\begin{tikzcd}
\mathbb{E}^{-1}(T', F) \rar & \mathcal{C}(F', F) \rar & \mathcal{C}(C,F).
\end{tikzcd}
\]
Since $(\mathcal{T}, \mathcal{F})$ is an $s$-torsion pair, the left-side hand and right-hand side vanish.
Hence we have $F'\in {}^{\perp}\mathcal{F}$.
The assertion follows from Proposition \ref{prop_orth}(2).
\end{proof}

We show that $\Phi$ and $\Psi$ are well-defined.

\begin{proposition}\label{prop_phi}
If $(\mathcal{T}, \mathcal{F}) \in \stors [t_{1}, t_{2}]$, then $(\mathcal{T} \cap \mathcal{F}_{1}, \mathcal{T}_{2} \cap \mathcal{F})$ is an $s$-torsion pair in $\mathcal{H}_{[t_{1}, t_{2}]}$.
\end{proposition}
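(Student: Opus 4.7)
The plan is to verify the three axioms (STP1), (STP2), (STP3) for $(\mathcal{T}\cap\mathcal{F}_1,\,\mathcal{T}_2\cap\mathcal{F})$ as a pair of subcategories of $\mathcal{H}_{[t_1,t_2]}$ equipped with its induced extriangulated structure (Example \ref{ex_neg}(3)). First, both subcategories sit inside $\mathcal{H}_{[t_1,t_2]}=\mathcal{T}_2\cap\mathcal{F}_1$: since $t_1\le t\le t_2$, the comparison of torsion classes gives $\mathcal{T}\subseteq\mathcal{T}_2$, and by the consequence noted after Proposition \ref{prop_orth}, we also have $\mathcal{F}\subseteq\mathcal{F}_1$. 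Hence $\mathcal{T}\cap\mathcal{F}_1\subseteq\mathcal{T}_2\cap\mathcal{F}_1=\mathcal{H}_{[t_1,t_2]}$ and $\mathcal{T}_2\cap\mathcal{F}\subseteq\mathcal{T}_2\cap\mathcal{F}_1=\mathcal{H}_{[t_1,t_2]}$.

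The conditions (STP2) and (STP3) for this pair are inherited directly from $(\mathcal{T},\mathcal{F})$: since $\mathcal{T}\cap\mathcal{F}_1\subseteq\mathcal{T}$ and $\mathcal{T}_2\cap\mathcal{F}\subseteq\mathcal{F}$, both $\mathcal{C}(\mathcal{T}\cap\mathcal{F}_1,\,\mathcal{T}_2\cap\mathcal{F})$ and $\mathbb{E}^{-1}(\mathcal{T}\cap\mathcal{F}_1,\,\mathcal{T}_2\cap\mathcal{F})$ vanish. These are the same Hom and negative extension groups computed in $\mathcal{C}$ and in $\mathcal{H}_{[t_1,t_2]}$, by the construction of the induced structure.

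The real content is (STP1). Given $H\in\mathcal{H}_{[t_1,t_2]}$, we have $H\in\mathcal{F}_1$, so Lemma \ref{lem_heart}(2) applied to $t_1\le t$ (which yields $\mathcal{F}_1=(\mathcal{T}\cap\mathcal{F}_1)\ast\mathcal{F}$) supplies an $\mathfrak{s}$-conflation
\[
\begin{tikzcd}
X \rar & H \rar & F \rar[dashrightarrow] & \
\end{tikzcd}
\]
with $X\in\mathcal{T}\cap\mathcal{F}_1$ and $F\in\mathcal{F}$. To place this inside $\mathcal{H}_{[t_1,t_2]}$, the only remaining task is to show $F\in\mathcal{T}_2$, for then $F\in\mathcal{T}_2\cap\mathcal{F}$, all three terms lie in $\mathcal{H}_{[t_1,t_2]}$, and the $\mathfrak{s}$-conflation is inherited. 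By Proposition \ref{prop_orth}(2), $\mathcal{T}_2={}^{\perp}\mathcal{F}_2$, so I take any $W\in\mathcal{F}_2$ and apply the contravariant long exact sequence of Proposition \ref{prop_longex} to the above conflation, obtaining exactness of
\[
\mathcal{C}(F,W)\to \mathcal{C}(H,W)\to \mathcal{C}(X,W).
\]
Both $H\in\mathcal{T}_2$ and $X\in\mathcal{T}\cap\mathcal{F}_1\subseteq\mathcal{T}_2$ pair trivially with $W\in\mathcal{F}_2$ by (STP2) for $t_2$, so $\mathcal{C}(H,W)=0$; the map $\mathcal{C}(F,W)\hookrightarrow\mathcal{C}(H,W)$ is then zero, forcing $\mathcal{C}(F,W)=0$. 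As $W\in\mathcal{F}_2$ was arbitrary, $F\in{}^{\perp}\mathcal{F}_2=\mathcal{T}_2$.

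The main obstacle, such as it is, is the step $F\in\mathcal{T}_2$; but this is exactly the familiar ``closure of torsion classes under quotients'', which in the extriangulated setting is captured cleanly by the long exact sequence of Proposition \ref{prop_longex} together with the orthogonal description $\mathcal{T}_2={}^{\perp}\mathcal{F}_2$. Everything else is direct bookkeeping from $t_1\le t\le t_2$ and Lemma \ref{lem_heart}.
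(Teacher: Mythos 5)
Your overall strategy is sound, and for (STP1) it is a mild variant of the paper's: the paper combines \emph{both} decompositions of Lemma \ref{lem_heart} with the uniqueness statement of Proposition \ref{prop_canseq} to conclude that the $(\mathcal{T},\mathcal{F})$-decomposition of $H$ has its two terms in $\mathcal{T}\cap\mathcal{F}_{1}$ and $\mathcal{T}_{2}\cap\mathcal{F}$, whereas you use only the decomposition $\mathcal{F}_{1}=(\mathcal{T}\cap\mathcal{F}_{1})\ast\mathcal{F}$ and then prove directly that the third term $F$ lies in $\mathcal{T}_{2}$. The containments in $\mathcal{H}_{[t_{1},t_{2}]}$ and the inheritance of (STP2) and (STP3) are handled correctly.

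The one step that carries the weight, namely $F\in\mathcal{T}_{2}$, is however not justified as written. Proposition \ref{prop_longex} gives exactness of $\mathcal{C}(F,W)\to\mathcal{C}(H,W)\to\mathcal{C}(X,W)$ at the middle and right-hand terms only; knowing $\mathcal{C}(H,W)=0$ says nothing about $\mathcal{C}(F,W)$ unless the first map is injective, and in a general extriangulated category (for instance a triangulated one) the map $\mathcal{C}(g,W)$ induced by a deflation $g$ need not be injective. The injectivity you assert with the hooked arrow is exactly the point where the negative first extension must enter: by (NE2) the sequence $\mathbb{E}^{-1}(X,W)\to\mathcal{C}(F,W)\to\mathcal{C}(H,W)$ is exact, and $\mathbb{E}^{-1}(X,W)=0$ holds by (STP3) for $t_{2}$, since $X\in\mathcal{T}\cap\mathcal{F}_{1}\subseteq\mathcal{T}_{2}$ and $W\in\mathcal{F}_{2}$. (Note that your invocation of (STP2) for $X$ is not what is needed here.) Replacing the appeal to Proposition \ref{prop_longex} by this (NE2) sequence --- which is precisely the computation in the proof of Lemma \ref{lem_heart} --- closes the gap and makes your argument complete.
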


\begin{proof}
For simplicity, we put $\mathcal{H}:=\mathcal{H}_{[t_{1}, t_{2}]}$.
Let $(\mathcal{T}, \mathcal{F}) \in \stors [t_{1}, t_{2}]$.
Then we clearly have $\mathcal{T} \cap \mathcal{F}_{1}, \mathcal{T}_{2} \cap \mathcal{F} \subseteq \mathcal{H}$.
We show (STP1), that is, $\mathcal{H}=(\mathcal{T} \cap \mathcal{F}_{1}) \ast (\mathcal{T}_{2} \cap \mathcal{F})$.
Since $\mathcal{H}$ is extension-closed, we obtain that  $\mathcal{H}\supseteq (\mathcal{T} \cap \mathcal{F}_{1}) \ast ( \mathcal{T}_{2} \cap \mathcal{F})$.
We show the converse inclusion.
By Lemma \ref{lem_heart}, $\mathcal{H}=\mathcal{T}_{2}\cap \mathcal{F}_{1}= (\mathcal{T}\ast (\mathcal{T}_{2}\cap \mathcal{F}))\cap ((\mathcal{T}\cap\mathcal{F}_{1})\ast \mathcal{F})$.
It follows from Proposition \ref{prop_canseq} that $\mathfrak{s}$-conflations in (STP1) uniquely exist.
Since $(\mathcal{T},\mathcal{F})$ is an $s$-torsion pair, we obtain that for each $H\in \mathcal{H}$, there uniquely exists an $\mathfrak{s}$-conflation $T\rightarrow H\rightarrow F\dashrightarrow $ such that $T\in\mathcal{T}\cap \mathcal{F}_{1}$ and $F\in\mathcal{T}_{2}\cap \mathcal{F}$. Hence $H\in (\mathcal{T}\cap \mathcal{F}_{1})\ast( \mathcal{T}_{2}\cap \mathcal{F}$).
The conditions (STP2) and (STP3) follow from the fact that $(\mathcal{T}, \mathcal{F})$ is an $s$-torsion pair in $\mathcal{C}$.
\end{proof}

\begin{proposition}\label{prop_psi}
If $(\mathcal{X}, \mathcal{Y})$ is an $s$-torsion pair in $\mathcal{H}_{[t_{1}, t_{2}]}$, then $(\mathcal{T}_{1} \ast \mathcal{X}, \mathcal{Y} \ast \mathcal{F}_{2}) \in \stors [t_{1}, t_{2}]$.
\end{proposition}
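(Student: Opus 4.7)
The plan is to verify the three axioms (STP1)--(STP3) for $(\mathcal{T}_1 \ast \mathcal{X}, \mathcal{Y} \ast \mathcal{F}_2)$ and then check $t_1 \le (\mathcal{T}_1 \ast \mathcal{X}, \mathcal{Y} \ast \mathcal{F}_2) \le t_2$. The interval condition is free: $\mathcal{T}_1 \subseteq \mathcal{T}_1 \ast \mathcal{X}$ because $\mathcal{X}$ contains a zero object, and $\mathcal{T}_1 \ast \mathcal{X} \subseteq \mathcal{T}_2$ because $\mathcal{X} \subseteq \mathcal{H}_{[t_1,t_2]} \subseteq \mathcal{T}_2$, $\mathcal{T}_1 \subseteq \mathcal{T}_2$, and $\mathcal{T}_2$ is extension-closed (Proposition \ref{prop_orth}).

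For (STP1), I would combine Lemma \ref{lem_heart} with associativity of $\ast$. Applying the lemma to $t_1 \le t_2$ gives $\mathcal{F}_1 = \mathcal{H}_{[t_1,t_2]} \ast \mathcal{F}_2$, while (STP1) for $(\mathcal{X}, \mathcal{Y})$ inside $\mathcal{H}_{[t_1,t_2]}$ gives $\mathcal{H}_{[t_1,t_2]} = \mathcal{X} \ast \mathcal{Y}$ (here one has to note that $\mathfrak{s}$-conflations in the extension-closed subcategory $\mathcal{H}_{[t_1,t_2]}$ are precisely the $\mathfrak{s}$-conflations in $\mathcal{C}$ with all three terms in $\mathcal{H}_{[t_1,t_2]}$). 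Then
\[
\mathcal{C} = \mathcal{T}_1 \ast \mathcal{F}_1 = \mathcal{T}_1 \ast \mathcal{H}_{[t_1,t_2]} \ast \mathcal{F}_2 = \mathcal{T}_1 \ast \mathcal{X} \ast \mathcal{Y} \ast \mathcal{F}_2 = (\mathcal{T}_1 \ast \mathcal{X}) \ast (\mathcal{Y} \ast \mathcal{F}_2).
\]

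For (STP2) and (STP3), the strategy is identical. Fix $C \in \mathcal{T}_1 \ast \mathcal{X}$ and $C' \in \mathcal{Y} \ast \mathcal{F}_2$, together with $\mathfrak{s}$-conflations $T_1 \to C \to X \dashrightarrow$ and $Y \to C' \to F_2 \dashrightarrow$ witnessing these memberships. Then for a bifunctor $G \in \{\mathcal{C}(-,-), \mathbb{E}^{-1}(-,-)\}$, apply Proposition \ref{prop_longex} (for $\mathcal{C}(-,-)$) or Definition \ref{def:negative_ext}(NE2) (for $\mathbb{E}^{-1}$) in the first variable and then in the second to reduce $G(C,C')=0$ to the four vanishings $G(T_1, Y)$, $G(T_1, F_2)$, $G(X, Y)$, $G(X, F_2)$. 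These are obtained as follows: $G(T_1, Y) = 0$ and $G(X, F_2) = 0$ come from $t_1, t_2 \in \stors\mathcal{C}$ since $Y \in \mathcal{H}_{[t_1,t_2]} \subseteq \mathcal{F}_1$ and $X \in \mathcal{H}_{[t_1,t_2]} \subseteq \mathcal{T}_2$; $G(T_1, F_2) = 0$ follows from $\mathcal{T}_1 \subseteq \mathcal{T}_2$ together with $t_2 \in \stors\mathcal{C}$ (or equivalently $\mathcal{F}_2 \subseteq \mathcal{F}_1$ together with $t_1 \in \stors\mathcal{C}$); and $G(X, Y) = 0$ is exactly (STP2) or (STP3) for $(\mathcal{X}, \mathcal{Y})$ in $\mathcal{H}_{[t_1,t_2]}$, noting that both $\mathcal{C}(-,-)$ and $\mathbb{E}^{-1}(-,-)$ restrict to the corresponding bifunctors on the extension-closed subcategory $\mathcal{H}_{[t_1,t_2]}$ (Example \ref{ex_neg}(3)).

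The main conceptual point — rather than an obstacle — is the symmetric use of (STP3) in $\mathcal{C}$: the vanishing $\mathbb{E}^{-1}(T_1, F_2) = 0$ is precisely what allows the long exact sequence argument to propagate across the $\mathfrak{s}$-conflation defining $C'$ (and likewise for the conflation defining $C$). Without negative first extensions in the picture, the long exact sequence in (NE2) would only begin with $\mathcal{C}(-,-)$-terms and one could not deduce $\mathcal{C}(T_1,C')=0$ from $\mathcal{C}(T_1,Y)=\mathcal{C}(T_1,F_2)=0$. Everything else is bookkeeping through the associativity of $\ast$ and the two long exact sequences.
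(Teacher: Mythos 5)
Your proposal is correct and follows essentially the same route as the paper: (STP1) via Lemma \ref{lem_heart} and associativity of $\ast$, and (STP2)--(STP3) by propagating the four vanishings $G(\mathcal{T}_1,\mathcal{Y})$, $G(\mathcal{T}_1,\mathcal{F}_2)$, $G(\mathcal{X},\mathcal{Y})$, $G(\mathcal{X},\mathcal{F}_2)$ through the long exact sequences of Proposition \ref{prop_longex} and (NE2). The only cosmetic difference is that the paper groups $G(\mathcal{T}_1,\mathcal{Y}\ast\mathcal{F}_2)=0$ directly from $\mathcal{Y}\ast\mathcal{F}_2\subseteq\mathcal{F}_1$, whereas you expand it into two separate vanishings.
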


\begin{proof}
Let $(\mathcal{X}, \mathcal{Y}) \in \stors \mathcal{H}_{[t_{1}, t_{2}]}$.
We show (STP1), that is, $\mathcal{C}=(\mathcal{T}_{1} \ast \mathcal{X}) \ast (\mathcal{Y} \ast \mathcal{F}_{2})$.
By Lemma \ref{lem_heart}(1), we have
\begin{align}
(\mathcal{T}_{1}\ast \mathcal{X}) \ast(\mathcal{Y} \ast \mathcal{F}_{2})=(\mathcal{T}_{1}\ast \mathcal{H}_{[t_{1}, t_{2}]}) \ast \mathcal{F}_{2}=(\mathcal{T}_{1}\ast (\mathcal{T}_{2}\cap\mathcal{F}_{1})) \ast \mathcal{F}_{2}=\mathcal{T}_{2} \ast \mathcal{F}_{2} =\mathcal{C}.\notag
\end{align}
We show (STP2) and (STP3).
Clearly $\mathcal{C}(\mathcal{X}, \mathcal{Y})=0$ and $\mathbb{E}^{-1}(\mathcal{X}, \mathcal{Y})=0$ hold.
Since $(\mathcal{T}_{2}, \mathcal{F}_{2})$ is an $s$-torsion pair, we obtain that $\mathcal{C}(\mathcal{X}, \mathcal{F}_{2})=0$ and $\mathbb{E}^{-1}(\mathcal{X}, \mathcal{F}_{2})=0$.
This implies that $\mathcal{C}(\mathcal{X}, \mathcal{Y} \ast \mathcal{F}_{2})=0$ by Proposition \ref{prop_longex} and $\mathbb{E}^{-1}(\mathcal{X},\mathcal{Y}\ast \mathcal{F}_{2})=0$ by (NE2).
Since $\mathcal{Y} \ast \mathcal{F}_{2} \subseteq \mathcal{F}_{1}$ and $(\mathcal{T}_{1}, \mathcal{F}_{1})$ is an $s$-torsion pair, we have that $\mathcal{C}(\mathcal{T}_{1}, \mathcal{Y}\ast \mathcal{F}_{2})=0$ and $\mathbb{E}^{-1}(\mathcal{T}_{1}, \mathcal{Y}\ast \mathcal{F}_{2})=0$.
Therefore we obtain that $\mathcal{C}(\mathcal{T}_{1} \ast \mathcal{X}, \mathcal{Y} \ast \mathcal{F}_{2})=0$ by Proposition \ref{prop_longex} and $\mathbb{E}^{-1}(\mathcal{T}_{1} \ast \mathcal{X}, \mathcal{Y} \ast \mathcal{F}_{2})=0$ by (NE2).
Thus $(\mathcal{T}_{1} \ast \mathcal{X}, \mathcal{Y} \ast \mathcal{F}_{2}) \in \stors\mathcal{C}$ holds.
Since $\mathcal{T}_{1} \subseteq \mathcal{T}_{1} \ast \mathcal{X} \subseteq \mathcal{T}_{2}$ clearly holds, we have the assertion.
\end{proof}

Now we are ready to prove Theorem \ref{mainthm}.

\begin{proof}[Proof of Theorem \ref{mainthm}]
By Propositions \ref{prop_phi} and \ref{prop_psi}, the maps $\Phi$ and $\Psi$ are well-defined.
Moreover, it is clear that these maps are order-preserving.
We show that $\Phi$ and $\Psi$ are mutually inverse isomorphisms.
Let $(\mathcal{T}, \mathcal{F}) \in \stors[t_{1}, t_{2}]$.
Then
\begin{align}
\Psi  \Phi ((\mathcal{T}, \mathcal{F}))=(\mathcal{T}_{1} \ast (\mathcal{T}\cap\mathcal{F}_{1}),(\mathcal{T}_{2} \cap \mathcal{F})\ast \mathcal{F}_{2})=(\mathcal{T}, \mathcal{F}),   \notag
\end{align}
where the last equality follows from Lemma \ref{lem_heart}.
Let $(\mathcal{X}, \mathcal{Y})$ be an $s$-torsion pair in $\mathcal{H}_{[t_{1}, t_{2}]}$.
Then we have
\begin{align}
\Phi \Psi ((\mathcal{X}, \mathcal{Y}))=((\mathcal{T}_{1} \ast \mathcal{X}) \cap \mathcal{F}_{1}, \mathcal{T}_{2} \cap(\mathcal{Y} \ast \mathcal{F}_{2})). \notag
\end{align}
Since a torsion-free class is uniquely determined by a torsion class, it is enough to show that $(\mathcal{T}_{1} \ast \mathcal{X}) \cap \mathcal{F}_{1}=\mathcal{X}$.
Clearly $(\mathcal{T}_{1} \ast \mathcal{X}) \cap \mathcal{F}_{1} \supseteq \mathcal{X}$ holds.
We prove the converse inclusion.
By $\mathcal{C}(\mathcal{T}_{1}, \mathcal{F}_{1})=0$ and $\mathcal{C}(\mathcal{X}, \mathcal{Y})=0$, we obtain
\begin{align}
(\mathcal{T}_{1} \ast \mathcal{X})\cap \mathcal{F}_{1}\subseteq \mathcal{H}_{[t_{1}, t_{2}]} \cap {}^{\perp} \mathcal{Y}=\mathcal{X}, \notag
\end{align}
where the last equality follows from Proposition \ref{prop_orth}(2).

We show that $\Phi$ and $\Psi$ preserve the hearts.
Since $\Phi$ and $\Psi$ are mutually inverse isomorphisms, it is enough to show that $\mathcal{H}_{[t, t']}=\mathcal{H}_{[\Phi(t), \Phi(t')]}$ holds for each  $\stors [t, t'] \subseteq \stors [t_{1}, t_{2}]$, where $t:=(\mathcal{T}, \mathcal{F})$ and $t':=(\mathcal{T}', \mathcal{F}')$.
By definition, we have
\begin{align}
\mathcal{H}_{[\Phi(t), \Phi(t')]}=(\mathcal{T}'\cap \mathcal{F}_{1}) \cap (\mathcal{T}_{2} \cap \mathcal{F})=\mathcal{T}'\cap\mathcal{F}=\mathcal{H}_{[t,t']}. \notag
\end{align}
This finishes the proof.
\end{proof}

We give two applications of Theorem \ref{mainthm}.
Let $\mathcal{D}$ be a triangulated category.
For $t$-structures $(\mathcal{U}_{1}, \mathcal{V}_{1}), (\mathcal{U}_{2}, \mathcal{V}_{2})$ on $\mathcal{D}$ with $\mathcal{U}_{1} \subseteq \mathcal{U}_{2}$, let
\begin{align}
\tstr [(\mathcal{U}_{1}, \mathcal{V}_{1}), (\mathcal{U}_{2}, \mathcal{V}_{2})]:= \{\textnormal{$(\mathcal{U}, \mathcal{V})$: a $t$-structure on $\mathcal{D}$} \mid \mathcal{U}_{1} \subseteq \mathcal{U} \subseteq \mathcal{U}_{2}\}. \notag
\end{align}
By Theorem \ref{mainthm}, we have the following result, which recovers Theorem \ref{intro_thm1}.

\begin{corollary}\label{recover-tstr}
Let $\mathcal{D}$ be a triangulated category.
For $i=1,2$, let $(\mathcal{U}_{i},\mathcal{V}_{i})$ be a $t$-structure on $\mathcal{D}$ with $\mathcal{U}_{1} \subseteq \mathcal{U}_{2}$ and $\mathcal{H}:=\mathcal{U}_{2} \cap \mathcal{V}_{1}$.
Then there exist mutually inverse isomorphisms of posets
\[
\begin{tikzcd}
\tstr {[}(\mathcal{U}_{1},\mathcal{V}_{1}),(\mathcal{U}_{2},\mathcal{V}_{2}){]} \rar[shift left, "\Phi"] & \stors\mathcal{H}, \lar[shift left, "\Psi"]
\end{tikzcd}
\]
where $\Phi(\mathcal{T},\mathcal{F}):=(\mathcal{T}\cap\mathcal{V}_{1}, \mathcal{U}_{2} \cap\mathcal{F})$ and $\Psi(\mathcal{X},\mathcal{Y}):=(\mathcal{U}_{1}\ast\mathcal{X}, \mathcal{Y} \ast \mathcal{V}_{2})$.
In addition, if $\Sigma \mathcal{U}_2 \subseteq \mathcal{U}_1$ holds, then $\mathcal{H}$ becomes an exact category by the induced extriangulated structure, and we have $\stors \mathcal{H} = \tors\mathcal{H}$.
\end{corollary}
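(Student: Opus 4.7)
The plan is to reduce the corollary to Theorem \ref{mainthm} by reinterpreting $t$-structures as $s$-torsion pairs in the extriangulated structure on $\mathcal{D}$ given in Example \ref{ex_neg}(1). First, by Example \ref{ex_tstr} (or Lemma \ref{lem_shift}), $t$-structures on $\mathcal{D}$ coincide with $s$-torsion pairs in $\mathcal{D}$, and consequently the poset $\tstr[(\mathcal{U}_1,\mathcal{V}_1),(\mathcal{U}_2,\mathcal{V}_2)]$ is literally the interval $\stors[(\mathcal{U}_1,\mathcal{V}_1),(\mathcal{U}_2,\mathcal{V}_2)]$. Under this identification the heart of the interval is precisely $\mathcal{H} = \mathcal{U}_2 \cap \mathcal{V}_1$, and the maps $\Phi, \Psi$ appearing in the statement are exactly those furnished by Theorem \ref{mainthm}, so the bijection and its order-preserving property are immediate.

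For the additional statement, I would verify that the induced negative first extension on $\mathcal{H}$ vanishes, since then Proposition \ref{prop_exactcat} forces $\mathcal{H}$ to be an exact category, and Example \ref{ex_extor} yields $\stors \mathcal{H} = \tors \mathcal{H}$. Concretely, for $H, H' \in \mathcal{H}$ I compute
\[
\mathbb{E}^{-1}(H,H') = \mathcal{D}(H,\Sigma^{-1} H') \cong \mathcal{D}(\Sigma H, H'),
\]
and $\Sigma H \in \Sigma \mathcal{U}_2 \subseteq \mathcal{U}_1$ by hypothesis, while $H' \in \mathcal{V}_1$; hence the Hom-group vanishes because $(\mathcal{U}_1,\mathcal{V}_1)$ is a $t$-structure. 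Thus $\mathbb{E}^{-1}|_{\mathcal{H}} = 0$, which completes the argument.

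There is no essential obstacle here; the whole corollary is essentially a translation exercise, with the only modest point being the equivalence $\mathcal{D}(H,\Sigma^{-1}H') \cong \mathcal{D}(\Sigma H, H')$ used to exploit the hypothesis $\Sigma \mathcal{U}_2 \subseteq \mathcal{U}_1$. I would make sure to phrase the statement so that the reader sees the identification $\tstr = \stors$ first, then simply cites Theorem \ref{mainthm}, and finally records the vanishing of $\mathbb{E}^{-1}$ on $\mathcal{H}$ as an application of Proposition \ref{prop_exactcat}.
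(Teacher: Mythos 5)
Your proposal is correct and follows essentially the same route as the paper: identify $t$-structures with $s$-torsion pairs via Example \ref{ex_tstr}, invoke Theorem \ref{mainthm} for the poset isomorphisms, and for the final claim show $\mathbb{E}^{-1}|_{\mathcal{H}}=0$ from $\Sigma\mathcal{U}_{2}\subseteq\mathcal{U}_{1}$ and $\mathcal{H}\subseteq\mathcal{V}_{1}$, then apply Proposition \ref{prop_exactcat} and Example \ref{ex_extor}. No gaps.
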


\begin{proof}
We regard $\mathcal{D}$ as the extriangulated category with the negative first extension $\mathbb{E}^{-1}(-, -):=\mathcal{D}(-, \Sigma^{-1}-)$.
By Example \ref{ex_tstr}, we have
\begin{align}
\tstr [(\mathcal{U}_{1},\mathcal{V}_{1}),(\mathcal{U}_{2},\mathcal{V}_{2})]=\stors [(\mathcal{U}_{1},\mathcal{V}_{1}),(\mathcal{U}_{2},\mathcal{V}_{2})].\notag
\end{align}
Hence the former assertion follows from Theorem \ref{mainthm}.

We prove the latter assertion.
Assume that $\Sigma\mathcal{U}_2 \subseteq \mathcal{U}_1$ holds.
We claim that the restriction of $\mathbb{E}^{-1}$ on $\mathcal{H}$ vanishes.
Indeed, for $C, A \in \mathcal{H} = \mathcal{U}_2 \cap \mathcal{V}_1$, we have $\mathbb{E}^{-1}(C,A) = \mathcal{D}(C,\Sigma^{-1} A) \cong \mathcal{D}(\Sigma C, A) = 0$ by $\Sigma C \in \Sigma\mathcal{U}_2 \subseteq \mathcal{U}_1$ and $A \in \mathcal{V}_1$.
Therefore $\mathcal{H}$ becomes an exact category by Proposition \ref{prop_exactcat}.
Since the negative first extension on $\mathcal{H}$ vanishes, Example \ref{ex_extor} shows $\stors \mathcal{H} = \tors \mathcal{H}$.
\end{proof}

Next, we apply Theorem \ref{mainthm} to exact categories.
Let $\mathcal{E}$ be an exact category.
For two torsion pairs $(\mathcal{T}_{1}, \mathcal{F}_{1})$ and $ (\mathcal{T}_{2}, \mathcal{F}_{2})$ in $\mathcal{E}$ with $\mathcal{T}_{1} \subseteq \mathcal{T}_{2}$, let
\begin{align}
\tors [(\mathcal{T}_{1}, \mathcal{F}_{1}), (\mathcal{T}_{2}, \mathcal{F}_{2})]:= \{\textnormal{$(\mathcal{T}, \mathcal{F})$}\in \tors \mathcal{E}\mid \mathcal{T}_{1} \subseteq \mathcal{T} \subseteq \mathcal{T}_{2}\}. \notag
\end{align}
The following corollary is a further generalization of results in \cite[Theorem 3.12]{J15}, \cite[Theorem 4.2]{AP} and \cite[Theorem A]{T}, where the abelian category cases are proved.

\begin{corollary}\label{recover-tors}
Let $\mathcal{E}$ be an exact category.
For $i=1, 2$, let $(\mathcal{T}_{i}, \mathcal{F}_{i})$ be a torsion pair in $\mathcal{E}$ with $\mathcal{T}_{1} \subseteq \mathcal{T}_{2}$ and $\mathcal{H}:=\mathcal{T}_{2} \cap \mathcal{F}_{1}$.
Then there exist mutually inverse isomorphisms of posets
\[
\begin{tikzcd}
\tors{[}(\mathcal{T}_{1},\mathcal{F}_{1}),(\mathcal{T}_{2},\mathcal{F}_{2}){]} \rar[shift left, "\Phi"] & \tors \mathcal{H}, \lar[shift left, "\Psi"]
\end{tikzcd}
\]
where $\Phi(\mathcal{T},\mathcal{F}):=(\mathcal{T}\cap\mathcal{F}_{1},\mathcal{T}_{2} \cap \mathcal{F})$ and $\Psi(\mathcal{X},\mathcal{Y}):=(\mathcal{T}_{1}\ast\mathcal{X}, \mathcal{Y}\ast \mathcal{F}_{2})$.
\end{corollary}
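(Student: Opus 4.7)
The plan is to derive this corollary as a direct specialization of Theorem \ref{mainthm}, using the observation that an exact category is precisely an extriangulated category whose negative first extension vanishes. First, I would regard $\mathcal{E}$ as an extriangulated category with negative first extension $\mathbb{E}^{-1} = 0$, as in Example \ref{ex_neg}(2). Under this identification, Example \ref{ex_extor} tells us that the condition (STP3) is vacuous, so torsion pairs in the exact category $\mathcal{E}$ coincide exactly with $s$-torsion pairs, and the poset $\tors[(\mathcal{T}_1,\mathcal{F}_1),(\mathcal{T}_2,\mathcal{F}_2)]$ agrees with $\stors[(\mathcal{T}_1,\mathcal{F}_1),(\mathcal{T}_2,\mathcal{F}_2)]$.

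Next, I would examine the heart $\mathcal{H} = \mathcal{T}_2 \cap \mathcal{F}_1$. Since $\mathcal{H}$ is extension-closed in $\mathcal{E}$ (being an intersection of a torsion class and a torsion-free class, each of which is extension-closed by Proposition \ref{prop_orth}), Example \ref{ex_neg}(3) endows $\mathcal{H}$ with an induced extriangulated structure and a restricted negative first extension. Because $\mathbb{E}^{-1}$ is identically zero on $\mathcal{E}$, its restriction to $\mathcal{H}$ is again zero. Applying Proposition \ref{prop_exactcat} then shows that $\mathcal{H}$ is itself an exact category, and invoking Example \ref{ex_extor} once more gives $\tors \mathcal{H} = \stors \mathcal{H}$.

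With both endpoints of the claimed bijection identified with their $s$-torsion analogues, I would conclude by directly applying Theorem \ref{mainthm} to the extriangulated category $\mathcal{E}$ and the interval $[t_1,t_2]$. The formulas $\Phi(\mathcal{T},\mathcal{F}) = (\mathcal{T}\cap\mathcal{F}_1,\mathcal{T}_2\cap\mathcal{F})$ and $\Psi(\mathcal{X},\mathcal{Y}) = (\mathcal{T}_1\ast\mathcal{X},\mathcal{Y}\ast\mathcal{F}_2)$ are literally those produced by Theorem \ref{mainthm}, so no translation is required. There is essentially no obstacle here: the entire content of the corollary is packaged inside the general theorem, and the proof reduces to the bookkeeping steps of checking that the two identifications $\tors = \stors$ (for $\mathcal{E}$ and for $\mathcal{H}$) are valid, which is precisely what the \textbf{NE} framework was engineered to deliver. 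If any step deserves mild attention, it is verifying that the restricted $\mathbb{E}^{-1}$ on the extension-closed subcategory $\mathcal{H}$ indeed vanishes, but this is immediate from the fact that restriction of the zero bifunctor is zero.
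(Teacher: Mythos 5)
Your proposal is correct and follows essentially the same route as the paper: identify $\tors$ with $\stors$ on both $\mathcal{E}$ and $\mathcal{H}$ via Example \ref{ex_neg}(2) and Example \ref{ex_extor}, then apply Theorem \ref{mainthm}. The only cosmetic difference is that you deduce exactness of $\mathcal{H}$ from Proposition \ref{prop_exactcat} applied to the vanishing restricted $\mathbb{E}^{-1}$, whereas the paper just cites the standard fact that extension-closed subcategories of exact categories are exact; both are fine.
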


\begin{proof}
Since extension-closed subcategories of exact categories are exact categories, $\mathcal{H}$ is an exact category.
By Example \ref{ex_neg}(2), we can regard an exact category as the extriangulated category such that its negative first extension vanishes.
Thus we have
\begin{align}
\tors [(\mathcal{T}_{1},\mathcal{F}_{1}),(\mathcal{T}_{2},\mathcal{F}_{2})]&=\stors [(\mathcal{T}_{1},\mathcal{F}_{1}),(\mathcal{T}_{2},\mathcal{F}_{2})], \notag \\
\tors \mathcal{H}&=\stors \mathcal{H} \notag
\end{align}
by Example \ref{ex_extor}.
Hence the assertion follows from Theorem \ref{mainthm}.
\end{proof}

In the following, we study how functorially finite $s$-torsion pairs behave under the isomorphisms of Theorem \ref{mainthm}.
First we recall the notion of functorially finite subcategories.
A subcategory $\mathcal{C}'$ of $\mathcal{C}$ is called a \emph{contravariantly finite} subcategory if for each $X \in \mathcal{C}$, there exists a right $\mathcal{C}'$-approximation of $X$.
Dually, \emph{covariantly finite} subcategories are defined.
Moreover, we say that $\mathcal{C}'$ is \emph{functorially finite} in $\mathcal{C}$ if it is contravariantly finite and covariantly finite in $\mathcal{C}$.
If $(\mathcal{T}, \mathcal{F})$ is an $s$-torsion pair, then $\mathcal{T}$ is a contravariantly finite subcategory of $\mathcal{C}$ and $\mathcal{F}$ is a covariantly finite subcategory of $\mathcal{C}$ by (STP1) and (STP2).
Then it is natural to consider the following condition on $s$-torsion pairs.

\begin{definition}
An $s$-torsion pair $(\mathcal{T},\mathcal{F})$ is said to be \emph{functorially finite} in $\mathcal{C}$ if both $\mathcal{T}$ and $\mathcal{F}$ are functorially finite in $\mathcal{C}$.
\end{definition}

Let $\fstors \mathcal{C}$ denote the set of functorially finite $s$-torsion pairs in $\mathcal{C}$.
For two functorially finite $s$-torsion pairs $t_{1}:=(\mathcal{T}_{1},\mathcal{F}_{1})\leq t_{2}:=(\mathcal{T}_{2},\mathcal{F}_{2})$, let $\fstors[t_{1},t_{2}]:= \stors[t_{1},t_{2}]\cap \fstors\mathcal{C}$.
The following proposition shows that the map $\Phi$ in Theorem \ref{mainthm} preserves functorially finiteness.

\begin{proposition}\label{prop_ff}
For $i=1,2$, let $t_{i}:=(\mathcal{T}_{i},\mathcal{F}_{i})\in \stors\mathcal{C}$ with $t_{1}\le t_{2}$.
Assume that $\mathcal{F}_{1}$ and $\mathcal{T}_{2}$ are functorially finite in $\mathcal{C}$.
Then the following statements hold.
\begin{itemize}
\item[(1)] If $(\mathcal{T}, \mathcal{F}) \in \fstors [t_{1}, t_{2}]$, then $(\mathcal{T} \cap \mathcal{F}_{1}, \mathcal{T}_{2} \cap \mathcal{F})$ is functorially finite in $\mathcal{C}$.
In particular, $\mathcal{H}_{[t_{1}, t_{2}]}$ is functorially finite in $\mathcal{C}$.
\item[(2)] If $(\mathcal{X}, \mathcal{Y}) \in \fstors \mathcal{H}_{[t_{1}, t_{2}]}$, then $\mathcal{X}$ and $\mathcal{Y}$ are functorially finite in $\mathcal{C}$.
In particular,
\begin{align}
\fstors \mathcal{H}_{[t_{1}, t_{2}]}=\{t\in \stors \mathcal{H}_{[t_{1}, t_{2}]} \mid t \mathrm{\; is\; functorially\; finite\; in\; }\mathcal{C}\}.\notag
\end{align}
\end{itemize}
\end{proposition}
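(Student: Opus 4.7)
The plan is to give an explicit two-step construction of approximations, reducing everything to the following general claim: if $(\mathcal{T}',\mathcal{F}')\le (\mathcal{T},\mathcal{F})$ are $s$-torsion pairs in $\mathcal{C}$ with $\mathcal{T}$ covariantly finite and $\mathcal{F}'$ contravariantly finite, then $\mathcal{H}:=\mathcal{T}\cap\mathcal{F}'$ is functorially finite in $\mathcal{C}$. To produce a right $\mathcal{H}$-approximation of $X\in\mathcal{C}$, I first take a right $\mathcal{F}'$-approximation $\psi\colon F'\to X$ and then the canonical $(\mathcal{T},\mathcal{F})$-decomposition $T\xrightarrow{\iota}F'\to F\dashrightarrow$ provided by Proposition \ref{prop_canseq}. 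Lemma \ref{lem_heart}(2) gives $\mathcal{F}'=\mathcal{H}\ast\mathcal{F}$, and combined with the uniqueness clause of Proposition \ref{prop_canseq} (using $\mathcal{H}\subseteq\mathcal{T}$) this forces $T\in\mathcal{H}$. The composition $\psi\circ\iota\colon T\to X$ is then a right $\mathcal{H}$-approximation: for any $h\colon H'\to X$ with $H'\in\mathcal{H}$, the inclusion $\mathcal{H}\subseteq\mathcal{F}'$ lets $h$ factor through $\psi$, and the inclusion $\mathcal{H}\subseteq\mathcal{T}$ together with the fact that $\iota$ is a right $\mathcal{T}$-approximation of $F'$ (again by Proposition \ref{prop_canseq}) produces the second factorization.

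The dual construction produces left $\mathcal{H}$-approximations. Given $X\in\mathcal{C}$, take a left $\mathcal{T}$-approximation $\phi\colon X\to T$ and then the canonical $(\mathcal{T}',\mathcal{F}')$-decomposition $T'_0\to T\xrightarrow{\pi}H_0\dashrightarrow$. Lemma \ref{lem_heart}(1) yields $\mathcal{T}=\mathcal{T}'\ast\mathcal{H}$, so by uniqueness $H_0\in\mathcal{H}$; the composition $\pi\circ\phi$ is the desired left $\mathcal{H}$-approximation by a symmetric factorization argument.

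With this general claim established, part (1) follows by two applications. Applied to $(\mathcal{T}_1,\mathcal{F}_1)\le(\mathcal{T},\mathcal{F})$, it shows that $\mathcal{T}\cap\mathcal{F}_1$ is functorially finite in $\mathcal{C}$, since $\mathcal{T}$ is functorially finite (because $t\in\fstors\mathcal{C}$) and $\mathcal{F}_1$ is functorially finite by hypothesis. Applied to $(\mathcal{T},\mathcal{F})\le(\mathcal{T}_2,\mathcal{F}_2)$, it shows that $\mathcal{T}_2\cap\mathcal{F}$ is functorially finite. The ``in particular'' assertion is the specialization $t=t_2$. For part (2), if $(\mathcal{X},\mathcal{Y})\in\fstors\mathcal{H}_{[t_1,t_2]}$, then composing approximations inside $\mathcal{H}_{[t_1,t_2]}$ with the approximations of $\mathcal{H}_{[t_1,t_2]}$ in $\mathcal{C}$ constructed via part (1) yields approximations for $\mathcal{X}$ and $\mathcal{Y}$ in $\mathcal{C}$. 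The final identity then follows from the elementary converse observation: if $Z\in\mathcal{H}_{[t_1,t_2]}$, any right $\mathcal{X}$-approximation of $Z$ in $\mathcal{C}$ has source in $\mathcal{X}\subseteq\mathcal{H}_{[t_1,t_2]}$ and is automatically a right $\mathcal{X}$-approximation in $\mathcal{H}_{[t_1,t_2]}$, and symmetrically for left approximations of $\mathcal{Y}$.

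I anticipate no serious obstacle. The only delicate point is the joint use of Lemma \ref{lem_heart} and the uniqueness part of Proposition \ref{prop_canseq} to guarantee that the intermediate object arising in each two-step construction actually lands in $\mathcal{H}$; without this, the composite would not serve as an approximation by an object of $\mathcal{H}$.
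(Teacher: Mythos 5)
Your proof is correct and follows essentially the same route as the paper: both arguments build the $\mathcal{H}$-approximation in two steps, first approximating by $\mathcal{T}$ (resp.\ $\mathcal{F}_1$) and then using the canonical conflations from Lemma \ref{lem_heart} to approximate further by the heart, and both deduce (2) by composing with the approximations from (1). The only cosmetic difference is that you identify the intermediate object as lying in $\mathcal{H}$ via the uniqueness clause of Proposition \ref{prop_canseq}, whereas the paper reads this off directly from the conflation supplied by Lemma \ref{lem_heart} together with the long exact sequence.
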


\begin{proof}
(1) Let $(\mathcal{T}, \mathcal{F}) \in \fstors [t_{1}, t_{2}]$.
We prove only that $\mathcal{T}\cap \mathcal{F}_{1}$ is functorially finite in $\mathcal{C}$ since the proof for $\mathcal{T}_{2} \cap \mathcal{F}$ is similar.

We show that $\mathcal{T} \cap \mathcal{F}_{1}$ is covariantly finite in $\mathcal{T}$.
Let $M \in \mathcal{T}$.
By Lemma \ref{lem_heart}(1), there exists an $\mathfrak{s}$-conflation $T_{1}\rightarrow M\xrightarrow{g}F_{1}\dashrightarrow$ such that $T_{1} \in \mathcal{T}_{1}$ and $F_{1} \in \mathcal{T} \cap \mathcal{F}_{1}$.
Then we have an exact sequence
\[
\begin{tikzcd}
\mathcal{C}(F_{1}, W) \rar["{\mathcal{C}(g, W)}"] & \mathcal{C}(M, W) \rar & \mathcal{C}(T_{1},W)=0
\end{tikzcd}
\]
for each $W \in \mathcal{T}\cap \mathcal{F}_{1}$.
Thus $g$ is a left $(\mathcal{T} \cap \mathcal{F}_{1})$-approximation of $M$, and hence $\mathcal{T}\cap \mathcal{F}_{1}$ is covariantly finite in $\mathcal{T}$.
Similarly, we can show that $\mathcal{T}\cap \mathcal{F}_{1}$ is contravariantly finite in $\mathcal{F}_{1}$.
Since $\mathcal{T}$ is covariantly finite in $\mathcal{C}$ and $\mathcal{F}_{1}$ is contravariantly finite in $\mathcal{C}$, we obtain that $\mathcal{T}\cap \mathcal{F}_{1}$ is functorially finite in $\mathcal{C}$.

(2) Let $(\mathcal{X}, \mathcal{Y}) \in \fstors \mathcal{H}_{[t_{1}, t_{2}]}$.
Then $\mathcal{X}$ and $\mathcal{Y}$ are functorially finite in $\mathcal{H}_{[t_{1}, t_{2}]}$.
By (1), $\mathcal{H}_{[t_{1}, t_{2}]}$ is functorially finite in $\mathcal{C}$.
Hence we have the assertion.
\end{proof}

We state that the isomorphisms in Theorem \ref{mainthm} give isomorphisms between functorially finite $s$-torsion pairs.
For an extriangulated category $\mathcal{C}$, we call $P \in \mathcal{C}$ a \emph{projective} object if $\mathbb{E}(P, \mathcal{C})=0$.
We say that $\mathcal{C}$ has \emph{enough projectives} if for each $C \in \mathcal{C}$, there exists an $\mathfrak{s}$-conflation $A \to P \to C \dashrightarrow$ such that $P$ is a projective object in $\mathcal{C}$.
Dually, we define \emph{injective} objects and that $\mathcal{C}$ has \emph{enough injectives}.

\begin{theorem}
Let $\mathcal{C}$ be an extriangulated category with a negative first extension.
For $i=1,2$, let $t_{i}:=(\mathcal{T}_{i},\mathcal{F}_{i})\in \fstors\mathcal{C}$ with $t_{1}\le t_{2}$.
Assume that $\mathcal{C}$ has enough projectives and enough injectives.
Then there exist mutually inverse isomorphisms of posets
\[
\begin{tikzcd}
\fstors {[t_{1}, t_{2}]} \rar[shift left, "\Phi"] & \fstors\mathcal{H}_{[t_{1}, t_{2}]}, \lar[shift left, "\Psi"]
\end{tikzcd}
\]
where $\Phi(\mathcal{T},\mathcal{F}):=(\mathcal{T}\cap\mathcal{F}_{1}, \mathcal{T}_{2} \cap \mathcal{F})$ and $\Psi(\mathcal{X},\mathcal{Y}):=(\mathcal{T}_{1}\ast\mathcal{X}, \mathcal{Y} \ast \mathcal{F}_{2})$.
\end{theorem}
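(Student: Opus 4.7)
The plan is to reduce to Theorem \ref{mainthm}: since $\Phi$ and $\Psi$ are already mutually inverse poset isomorphisms between $\stors[t_1,t_2]$ and $\stors\mathcal{H}_{[t_1,t_2]}$, it suffices to check that each restricts to a well-defined map between $\fstors[t_1,t_2]$ and $\fstors\mathcal{H}_{[t_1,t_2]}$.

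The direction $\Phi$ uses only tools already in hand. By Proposition \ref{prop_ff}(1), both $\Phi(\mathcal{T},\mathcal{F})=(\mathcal{T}\cap\mathcal{F}_1,\mathcal{T}_2\cap\mathcal{F})$ and the heart $\mathcal{H}_{[t_1,t_2]}$ itself are functorially finite in $\mathcal{C}$. Combined with the elementary observation that a subcategory of $\mathcal{H}_{[t_1,t_2]}$ which is functorially finite in $\mathcal{C}$ is automatically functorially finite in $\mathcal{H}_{[t_1,t_2]}$ (because any $\mathcal{C}$-approximation of some $H\in\mathcal{H}_{[t_1,t_2]}$ by such a subcategory already sits inside $\mathcal{H}_{[t_1,t_2]}$ and continues to be an approximation when so restricted), this settles the $\Phi$ direction.

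For $\Psi$, let $(\mathcal{X},\mathcal{Y})\in\fstors\mathcal{H}_{[t_1,t_2]}$. Proposition \ref{prop_ff}(2) gives that $\mathcal{X}$ and $\mathcal{Y}$ are functorially finite in $\mathcal{C}$. Moreover, by Proposition \ref{prop_psi}, $(\mathcal{T}_1\ast\mathcal{X},\mathcal{Y}\ast\mathcal{F}_2)$ is an $s$-torsion pair in $\mathcal{C}$, so by Proposition \ref{prop_canseq} the inclusions of $\mathcal{T}_1\ast\mathcal{X}$ and $\mathcal{Y}\ast\mathcal{F}_2$ into $\mathcal{C}$ admit a right and a left adjoint respectively, whose counit and unit furnish right approximations by $\mathcal{T}_1\ast\mathcal{X}$ and left approximations by $\mathcal{Y}\ast\mathcal{F}_2$. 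Thus $\mathcal{T}_1\ast\mathcal{X}$ is automatically contravariantly finite and $\mathcal{Y}\ast\mathcal{F}_2$ is automatically covariantly finite. It remains only to establish covariant finiteness of $\mathcal{T}_1\ast\mathcal{X}$ and contravariant finiteness of $\mathcal{Y}\ast\mathcal{F}_2$, and precisely here the hypothesis of enough projectives and injectives is used.

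The main obstacle is therefore the following auxiliary lemma: if $\mathcal{C}$ has enough injectives and $\mathcal{U},\mathcal{V}$ are covariantly finite subcategories of $\mathcal{C}$, then $\mathcal{U}\ast\mathcal{V}$ is covariantly finite in $\mathcal{C}$; dually, with enough projectives, the $\ast$-product of two contravariantly finite subcategories is contravariantly finite. The natural approach is to produce, for each $M\in\mathcal{C}$, a left $(\mathcal{U}\ast\mathcal{V})$-approximation by patching a left $\mathcal{U}$-approximation $u\colon M\to U$ with a left $\mathcal{V}$-approximation of a suitably chosen third term; the role of the injective object $I$ in an inflation $M\to I$ is to ensure that $u$ can be promoted into an inflation $M\to U\oplus I$ (the composite of the split inflation $\binom{u}{1_M}\colon M\to U\oplus M$ with the direct-sum inflation $\mathrm{id}_U\oplus(M\hookrightarrow I)$), after which the resulting $\mathfrak{s}$-conflation can be combined with the second approximation via the axiom (ET4) to assemble the desired left $(\mathcal{U}\ast\mathcal{V})$-approximation. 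Granting this lemma, applying it to $(\mathcal{T}_1,\mathcal{X})$ with enough injectives and its dual to $(\mathcal{Y},\mathcal{F}_2)$ with enough projectives yields the remaining finiteness properties and completes the proof.
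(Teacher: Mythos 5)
Your overall reduction coincides with the paper's: by Theorem \ref{mainthm} it suffices to check that $\Phi$ and $\Psi$ are well defined on functorially finite pairs; $\Phi$ is handled by Proposition \ref{prop_ff}(1), and for $\Psi$ the contravariant finiteness of $\mathcal{T}_1\ast\mathcal{X}$ and the covariant finiteness of $\mathcal{Y}\ast\mathcal{F}_2$ are automatic from (STP1)--(STP2), so everything hinges on a Gentle--Todorov type extension lemma. That lemma is exactly where your proposal has a genuine gap.

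The correct statement (this is \cite[Theorem 3.3]{H19}, the extriangulated version of Gentle--Todorov's theorem, which is what the paper invokes) is: if $\mathcal{C}$ has \emph{enough projectives} and $\mathcal{U},\mathcal{V}$ are \emph{covariantly} finite, then $\mathcal{U}\ast\mathcal{V}$ is covariantly finite; dually, enough injectives handles the contravariant case. You have paired the hypotheses the other way around (enough injectives with covariant finiteness), and your sketch does not prove your version. The reason projectives are what the covariant case needs: after pulling back along a left $\mathcal{V}$-approximation $M\to V$, one must control, for this \emph{fixed} $V$, the conflations $U'\to E\to V\dashrightarrow$ with $U'\in\mathcal{U}$ varying, together with the maps from $M$ into them; a deflation $P\twoheadrightarrow V$ with $P$ projective and first term $K$ realizes $\mathbb{E}(V,U')$ as a quotient of $\mathcal{C}(K,U')$, i.e.\ as maps \emph{from a fixed object into} $\mathcal{U}$, which is precisely what a left $\mathcal{U}$-approximation of the relevant fixed object controls. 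An injective copresentation instead encodes these extensions by maps out of a \emph{varying} cosyzygy of $U'$, which covariant finiteness of $\mathcal{U}$ does not control. Concretely, your construction also fails to produce an object of $\mathcal{U}\ast\mathcal{V}$: the middle term $U\oplus I$ of your inflation need not lie in $\mathcal{U}$ (there is no reason to have $I\in\mathcal{U}$), a left $\mathcal{V}$-approximation $C\to V$ of the cone points the wrong way to modify the third term of the conflation (one needs a morphism \emph{into} $C$ to pull back, not one out of $C$), and the factorization property of the alleged approximation is never verified. With the hypotheses paired correctly and \cite[Theorem 3.3]{H19} (or a full proof of it) in hand, the remainder of your argument goes through and agrees with the paper's proof.
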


\begin{proof}
By Theorem \ref{mainthm}, it is enough to show that $\Phi$ and $\Psi$ are well-defined.
It follows from Proposition \ref{prop_ff}(1) that $\Phi$ is well-defined.
Let $(\mathcal{X}, \mathcal{Y}) \in \fstors \mathcal{H}_{[t_{1}, t_{2}]}$.
By Proposition \ref{prop_ff}(2), $\mathcal{X}$ is covariantly finite in $\mathcal{C}$.
Since $\mathcal{C}$ has enough projectives, it follows from \cite[Theorem 3.3]{H19} that $\mathcal{T}_{1}\ast\mathcal{X}$ is covariantly finite in $\mathcal{C}$, and hence it is functorially finite in $\mathcal{C}$.
Similarly, we can show that $\mathcal{Y} \ast \mathcal{F}_{2}$ is functorially finite in $\mathcal{C}$.
Hence $\Psi$ is well-defined. This finishes the proof.
\end{proof}

\subsection{Example: successor-closed subsets of quivers}
Let $\Lambda$ be a finite-dimensional algebra over a field $k$, and let $\operatorname{\mathsf{mod}}\Lambda$ denote the category of finitely generated right $\Lambda$-modules.
Since $\operatorname{\mathsf{mod}}\Lambda$ is abelian, it has a natural exact structure. Therefore it can be regarded as the extriangulated category with the trivial negative first extension by Example \ref{ex_neg}(2).

In this subsection, we introduce different negative first extensions in $\operatorname{\mathsf{mod}} \Lambda$ when the global dimension $\mathop{\mathrm{gl.dim}} \Lambda$ is finite.
Moreover, if $\Lambda$ is hereditary, then we give a combinatorial interpretation of Theorem \ref{mainthm} for this new negative first extension.

We start with giving the following example which shows that $\operatorname{\mathsf{mod}} \Lambda$ admits a non-trivial negative first extension.

\begin{example}\label{ex_fingldim}
Let $\Lambda$ be a finite-dimensional $k$-algebra with $\mathop{\mathrm{gl.dim}} \Lambda \leq n$. Define a bifunctor $\mathbb{E}^{-1}(-,-) : (\operatorname{\mathsf{mod}}\Lambda)^{\mathrm{op}} \times \operatorname{\mathsf{mod}}\Lambda \to \mathcal{A}b $ as
\begin{align}
\mathbb{E}^{-1}(-,-) := \operatorname{Ext}_\Lambda^n(-,-). \notag
\end{align}
Then $\mathbb{E}^{-1}(C,-)$ and $\mathbb{E}^{-1}(-,A)$ are right exact functors by $\mathop{\mathrm{gl.dim}} \Lambda \leq n$.
Thus $\mathbb{E}^{-1}=\operatorname{Ext}_{\Lambda}^{n}$ gives a negative first extension structure on $\operatorname{\mathsf{mod}}\Lambda$ together with $\delta_\sharp^{-1} = 0$ and $\delta_{-1}^\sharp = 0$ for all $\delta$.
\end{example}

This negative first extension structure naturally appears by considering periodic derived categories.
We refer the reader to \cite{G, Z14, Sa} for the details on the periodic derived category.

\begin{remark}\label{rem_periodic}
Fix an integer $n\ge 1$.
Let $\Lambda$ be a finite-dimensional algebra with $\mathop{\mathrm{gl.dim}} \Lambda \leq n$. Consider the $(n+1)$-st periodic derived category $D_{n+1}(\Lambda)$.
We can check that $\operatorname{\mathsf{mod}} \Lambda$ is equivalent to an extension-closed subcategory of $D_{n+1}(\Lambda)$.
Thus it has the induced extriangulated structure and the negative first extension structure by Example \ref{ex_neg}(3).
For all $X,Y\in \operatorname{\mathsf{mod}}\Lambda$, we have isomorphisms
\begin{align}\label{isom320}
\mathbb{E}^{-1}(X,Y)=D_{n+1}(\Lambda)(X,\Sigma^{-1}Y)\cong  D_{n+1}(\Lambda)(X,\Sigma^{n}Y)\cong\operatorname{Ext}_{\Lambda}^{n}(X,Y)
\end{align}
as abelian groups, where the middle isomorphism follows from $\Sigma^{n+1}Y\cong Y$.
Moreover, if $n$ is odd, then $D_{n+1}(\Lambda)$ has a natural isomorphism $\Sigma^{n+1}\cong \mathsf{id}_{D_{n+1}(\Lambda)}$.
Thus \eqref{isom320} is also a natural isomorphism in both variables.
Hence this negative first extension structure is the same as Example \ref{ex_fingldim}.
\end{remark}

We recall the negative first extension structure in Example \ref{ex_nakayama}.
Let $\Lambda$ and $\mathcal{A}$ be in Example \ref{ex_nakayama}.
Then $\operatorname{\underline{\mathsf{mod}}}\Lambda$ is triangle equivalent to the $2$-periodic derived category $D_{2}(kQ)$, where $Q$ is a quiver of type $A_{2}$ (see \cite[Example 5.6]{Sa} for example).
Hence, the negative first extension structure of $\mathcal{A}$ comes from that of  $D_{2}(kQ)$.

Let us focus on the case $n=1$.
Namely, we assume that $\Lambda$ is hereditary.
Thus Example \ref{ex_fingldim} shows that $\operatorname{\mathsf{mod}} \Lambda$ has a non-trivial negative first extension structure defined by $\mathbb{E}^{-1} = \operatorname{Ext}_{\Lambda}^1$.
Let $\mathcal{C}$ denote the extriangulated category $\operatorname{\mathsf{mod}} \Lambda$ with this negative first extension except for regarding $\operatorname{\mathsf{mod}} \Lambda$ as an abelian category.
First, we give a characterization of $s$-torsion pairs in $\mathcal{C}$.
Note that every $s$-torsion pair ($\mathcal{T},\mathcal{F}$) in $\mathcal{C}$ is a torsion pair in $\operatorname{\mathsf{mod}} \Lambda$ (in the usual sense) by (STP1) and (STP2).
As for (STP3), we make the following general observation.

\begin{proposition}\label{prop_quiverserre}
Let $\Lambda$ be an arbitrary finite dimensional algebra and $(\mathcal{T}, \mathcal{F})$ a torsion pair in $\operatorname{\mathsf{mod}} \Lambda$ (in the usual sense).
Then the following statements are equivalent.
\begin{itemize}
\item[(1)] $\operatorname{Ext}_{\Lambda}^{1}(\mathcal{T}, \mathcal{F})=0$.
\item[(2)] $\mathcal{T}$ and $\mathcal{F}$ are Serre subcategories, that is, $\mathcal{T}$ is closed under submodules and $\mathcal{F}$ is closed under quotients.
\end{itemize}
\end{proposition}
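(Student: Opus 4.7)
\smallskip

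\noindent\textbf{Proof proposal.} The plan is to prove each direction by a short torsion-theoretic argument, exploiting only that $\mathcal{T}$ is closed under quotients and extensions, $\mathcal{F}$ under submodules and extensions, $\operatorname{Hom}_\Lambda(\mathcal{T},\mathcal{F})=0$, and $\mathcal{T}\cap\mathcal{F}=0$. Throughout, for $M\in\operatorname{\mathsf{mod}}\Lambda$ I will write $tM\subseteq M$ for its torsion part, with $M/tM\in\mathcal{F}$.

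For (2)$\Rightarrow$(1), I would start with an arbitrary extension
\[
0\to F\to X\to T\to 0
\]
with $T\in\mathcal{T}$ and $F\in\mathcal{F}$, and try to split it using the torsion decomposition of the middle term. The first step is to observe that $tX\cap F$ is a submodule of both $tX\in\mathcal{T}$ and $F\in\mathcal{F}$; using the assumption that $\mathcal{T}$ is closed under submodules and the general fact that $\mathcal{T}\cap\mathcal{F}=0$, this intersection vanishes. Consequently the restriction $\varphi\colon tX\to T$ of the projection $X\to T$ is injective. The second step is to identify the cokernel of $\varphi$ with $X/(F+tX)$, which is simultaneously a quotient of $X/tX\in\mathcal{F}$ (hence in $\mathcal{F}$ since $\mathcal{F}$ is assumed closed under quotients) and a quotient of $T\in\mathcal{T}$ (hence in $\mathcal{T}$), so it vanishes. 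Thus $\varphi$ is an isomorphism and provides a section $T\to X$ of the extension.

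For (1)$\Rightarrow$(2), I would show that $\mathcal{T}$ is closed under submodules; the statement that $\mathcal{F}$ is closed under quotients is entirely dual and I would handle it by the same recipe. Given $M\subseteq T$ with $T\in\mathcal{T}$, I take the torsion decomposition $0\to tM\to M\to F\to 0$ with $F:=M/tM\in\mathcal{F}$, and form the quotient exact sequence
\[
0\to F\to T/tM\to T/M\to 0,
\]
whose outer terms lie in $\mathcal{F}$ and $\mathcal{T}$ respectively (for the right-hand term use that $\mathcal{T}$ is closed under quotients). By hypothesis (1) this sequence splits, so $F$ is a direct summand of $T/tM$. But $T/tM$ is a quotient of $T\in\mathcal{T}$, hence in $\mathcal{T}$, and torsion classes are closed under direct summands, so $F\in\mathcal{T}\cap\mathcal{F}=0$. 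Therefore $M=tM\in\mathcal{T}$.

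The only real subtlety I anticipate is bookkeeping: making sure in the direction (2)$\Rightarrow$(1) that the cokernel of $\varphi$ really is the intersection of a torsion class and a torsion-free class, and in (1)$\Rightarrow$(2) remembering that torsion classes are automatically closed under direct summands. Both are standard and should not present a genuine obstacle; the proof is essentially a diagram chase combined with the identity $\mathcal{T}\cap\mathcal{F}=0$.
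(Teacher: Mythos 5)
Your proof is correct, but it takes a genuinely different route from the paper's in both directions. For (1)$\Rightarrow$(2) the paper argues more directly: given $0\to A\to T\to C\to 0$ with $T\in\mathcal{T}$ (so $C\in\mathcal{T}$), the exact sequence $\operatorname{Hom}_\Lambda(T,F)\to\operatorname{Hom}_\Lambda(A,F)\to\operatorname{Ext}^1_\Lambda(C,F)$ gives $\operatorname{Hom}_\Lambda(A,\mathcal{F})=0$, hence $A\in{}^{\perp}\mathcal{F}=\mathcal{T}$; your version instead splits the extension $0\to M/tM\to T/tM\to T/M\to 0$ and invokes closure of $\mathcal{T}$ under direct summands, which is slightly longer but equally valid. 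The more substantial divergence is in (2)$\Rightarrow$(1): the paper first shows that $\mathcal{T}$ is closed under projective covers (testing against simple modules in the Serre subcategory $\mathcal{F}$ and using $\Omega T\subseteq\operatorname{rad}P$), and then dimension-shifts via $\operatorname{Hom}_\Lambda(\Omega T,F)\to\operatorname{Ext}^1_\Lambda(T,F)\to\operatorname{Ext}^1_\Lambda(P,F)=0$. Your argument splits an arbitrary extension $0\to F\to X\to T\to 0$ directly by showing that $tX\to T$ is an isomorphism (its kernel $tX\cap F$ and cokernel $X/(tX+F)$ both lie in $\mathcal{T}\cap\mathcal{F}=0$, the kernel because $\mathcal{T}$ is closed under submodules and the cokernel because $\mathcal{F}$ is closed under quotients). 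This buys you generality: your proof uses neither projective covers nor simple modules, so it works verbatim for a torsion pair in any abelian category, whereas the paper's argument is tied to length categories with projective covers; in the stated setting both are complete.
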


\begin{proof}
(1)$\Rightarrow$(2):
We only show that $\mathcal{T}$ is closed under submodules; the proof for $\mathcal{F}$ is similar.
Assume that we have an exact sequence
\[
\begin{tikzcd}
0 \rar & A \rar & T \rar & C \rar & 0
\end{tikzcd}
\]
with $T \in \mathcal{T}$.
Then $C \in \mathcal{T}$ holds since $\mathcal{T}$ is closed under quotients.
By applying $\operatorname{Hom}_{\Lambda}(-, F)$ with $F \in \mathcal{F}$, we obtain an exact sequence
\[
\begin{tikzcd}
\operatorname{Hom}_{\Lambda}(T,F) \rar & \operatorname{Hom}_{\Lambda}(A,F) \rar & \operatorname{Ext}_{\Lambda}^{1}(C,F).
\end{tikzcd}
\]
Then $\operatorname{Hom}_{\Lambda}(T,F) = 0$ by $\operatorname{Hom}_{\Lambda}(\mathcal{T},\mathcal{F}) = 0$, and $\operatorname{Ext}_{\Lambda}^{1}(C,F) = 0$ by $\operatorname{Ext}_{\Lambda}^{1}(\mathcal{T},\mathcal{F}) = 0$ and $C \in \mathcal{T}$. Thus we obtain $\operatorname{Hom}_{\Lambda}(A,\mathcal{F}) = 0$, and hence $A \in {}^\perp \mathcal{F} = \mathcal{T}$.

(2)$\Rightarrow$(1):
We claim that $\mathcal{T}$ is closed under projective covers.
Let $T \in \mathcal{T}$ and consider an exact sequence
\[
\begin{tikzcd}
0 \rar & \Omega T \rar & P \rar["\pi"] & T \rar & 0,
\end{tikzcd}
\]
where $\pi$ is a projective cover of $T$.
We show $P \in {}^\perp \mathcal{F}$.
Since $\mathcal{F}$ is a Serre subcategory, it suffices to show that $\operatorname{Hom}_{\Lambda}(P,S) = 0$ for every simple $\Lambda$-module $S \in \mathcal{F}$. Now since $\Omega T \subseteq \operatorname{rad} P$, we have an isomorphism $\operatorname{Hom}_{\Lambda}(T,S) \cong \operatorname{Hom}_{\Lambda}(P,S)$.
Hence $\operatorname{Hom}_{\Lambda}(P,S) = 0$ follows from $\operatorname{Hom}_{\Lambda}(T,S) = 0$. Thus we obtain $P \in {}^\perp \mathcal{F}=\mathcal{T}$.
Since $\mathcal{T}$ is a Serre subcategory, $\Omega T$ also belongs to $\mathcal{T}$. Take any $F \in \mathcal{F}$. Then we have an exact sequence
\[
\begin{tikzcd}
\operatorname{Hom}_{\Lambda}(\Omega T,F) \rar & \operatorname{Ext}_{\Lambda}^{1}(T,F) \rar & \operatorname{Ext}_{\Lambda}^{1}(P,F)=0.
\end{tikzcd}
\]
Thus $\operatorname{Ext}_{\Lambda}^{1}(T,F) = 0$ holds by $\operatorname{Hom}_{\Lambda}(\Omega T,F) = 0$.
\end{proof}

The following description of $s$-torsion pairs in $\mathcal{C}$ immediately follows from Proposition \ref{prop_quiverserre}.

\begin{corollary}\label{cor_stros_serre}
Let $\Lambda$ be a hereditary algebra and let $(\mathcal{T},\mathcal{F})$ be a pair of subcategories of $\mathcal{C} = \operatorname{\mathsf{mod}} \Lambda$.
Then the following statements are equivalent.
\begin{itemize}
\item[(1)] $(\mathcal{T},\mathcal{F})$ is an $s$-torsion pair in $\mathcal{C}$.
\item[(2)] $(\mathcal{T},\mathcal{F})$ is a torsion pair in $\operatorname{\mathsf{mod}} \Lambda$ (in the usual sense) such that $\mathcal{T}$ and $\mathcal{F}$ are Serre subcategories of $\operatorname{\mathsf{mod}} \Lambda$.
\end{itemize}
\end{corollary}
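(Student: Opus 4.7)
The plan is to reduce Corollary \ref{cor_stros_serre} directly to Proposition \ref{prop_quiverserre}, since once we unfold the definitions, (STP3) in $\mathcal{C}$ is literally the hypothesis of that proposition.

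First I would unpack the extriangulated structure on $\mathcal{C}$. Regarded as the abelian category $\operatorname{\mathsf{mod}}\Lambda$ with its natural exact structure (Example \ref{ex_neg}(2)), the $\mathfrak{s}$-conflations in $\mathcal{C}$ are exactly the short exact sequences in $\operatorname{\mathsf{mod}}\Lambda$. Therefore a pair $(\mathcal{T},\mathcal{F})$ of subcategories of $\mathcal{C}$ satisfies (STP1) and (STP2) if and only if it is a torsion pair in $\operatorname{\mathsf{mod}}\Lambda$ in the usual sense; this part needs no hereditary hypothesis on $\Lambda$.

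Next, by the definition of the negative first extension on $\mathcal{C}$ (Example \ref{ex_fingldim} with $n=1$), condition (STP3) reads $\mathbb{E}^{-1}(\mathcal{T},\mathcal{F})=\operatorname{Ext}_{\Lambda}^{1}(\mathcal{T},\mathcal{F})=0$. Given that $(\mathcal{T},\mathcal{F})$ is already a torsion pair in $\operatorname{\mathsf{mod}}\Lambda$, Proposition \ref{prop_quiverserre} applies and tells us that this vanishing condition is equivalent to $\mathcal{T}$ and $\mathcal{F}$ both being Serre subcategories of $\operatorname{\mathsf{mod}}\Lambda$. Combining the two equivalences finishes the proof in both directions.

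There is no real obstacle here; the only thing to be mindful of is that the bifunctor $\operatorname{Ext}_{\Lambda}^{1}$ genuinely defines a negative first extension structure on $\mathcal{C}$ in the sense of Definition \ref{def:negative_ext}, which is already guaranteed by Example \ref{ex_fingldim} using $\mathop{\mathrm{gl.dim}}\Lambda\le 1$. Accordingly, the proof is nothing more than a one-line invocation of Proposition \ref{prop_quiverserre} after translating (STP3) into the $\operatorname{Ext}^{1}$-vanishing condition.
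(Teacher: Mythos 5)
Your proposal is correct and follows exactly the route the paper intends: the paper states that the corollary ``immediately follows from Proposition \ref{prop_quiverserre}'' after noting that (STP1) and (STP2) amount to being a usual torsion pair and that (STP3) is the vanishing $\operatorname{Ext}^1_\Lambda(\mathcal{T},\mathcal{F})=0$ for the negative first extension of Example \ref{ex_fingldim} with $n=1$. Nothing further is needed.
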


In the rest of this subsection, we assume that $\Lambda$ is the path algebra of a finite acyclic quiver $Q$.
Let $Q_{0}$ denote the set of all vertices in $Q$.
For a vertex $i \in Q_0$, let $e_i$ denote the primitive idempotent of $\Lambda$ corresponding to $i$. We define $P(i):= e_i \Lambda$ and $S(i):= e_i \Lambda / \operatorname{\mathrm{rad}} (e_i\Lambda)$, which are indecomposable projective and simple modules respectively.
For a $\Lambda$-module $M$, we define the \emph{support} $\operatorname{\mathrm{supp}}M$ of $M$ as $\{i \in Q_{0} \mid Me_{i} \neq 0\}$.

In order to give an interpretation of $s$-torsion pairs by quivers, we need the following notion.

\begin{definition}
Let $I$ be a subset of $Q_0$. We say that $I$ is a \emph{successor-closed subset of $Q_{0}$} if $j \in I$ holds whenever there is an arrow $i \to j$ in $Q$ with $i \in I$.
\end{definition}

Let $\operatorname{\mathsf{succ}}Q$ denote the set of all successor-closed subsets of $Q_{0}$.
It is easily checked that if $I_{1}, I_{2} \in \operatorname{\mathsf{succ}} Q$, then $I_{1} \cap I_{2}, I_{1} \cup I_{2} \in \operatorname{\mathsf{succ}} Q$.
Hence $(\operatorname{\mathsf{succ}}Q, \subseteq)$ forms a lattice.
For $I \in \operatorname{\mathsf{succ}}Q$, let $Q'$ (respectively, $Q''$) be a full subquiver of $Q$ whose vertex set is $I$ (respectively, $Q_{0} \setminus I$).
Then $\operatorname{\mathsf{mod}}kQ'$ and $\operatorname{\mathsf{mod}}kQ''$ are Serre subcategories of $\operatorname{\mathsf{mod}}kQ$.

Now we describe a relationship between $s$-torsion pairs and successor-closed subsets.
For a pair $t:=(\mathcal{T}, \mathcal{F})$ of subcategories of $\mathcal{C}$, we define $I_{t}:=\{i \in Q_{0} \mid S(i) \in \mathcal{T}\}$.
For a subset $I$ of $Q_{0}$, we define a pair $t_{I}:=(\mathcal{T}_{I}, \mathcal{F}_{I})$ of subcategories of $\mathcal{C}$ as
 $\mathcal{T}_{I}:=\{M \in \operatorname{\mathsf{mod}}\Lambda \mid \operatorname{\mathrm{supp}}M \subseteq I\}$ and $\mathcal{F}_{I}:=\{M \in \operatorname{\mathsf{mod}}\Lambda \mid \operatorname{\mathrm{supp}}M \subseteq Q_{0} \setminus I\}$.
Note that $\mathcal{T}_{I}$ and $\mathcal{F}_{I}$ are Serre subcategories by construction.
We can easily check that $t_{I_{t}}=t$ for each $t \in\stors \mathcal{C}$ and $I_{t_{I}}=I$ for each $I \in \operatorname{\mathsf{succ}}Q$ hold.

\begin{proposition}\label{prop_succl}
Let $\Lambda:=kQ$ be the path algebra of a finite acyclic quiver $Q$.
We regard $\mathcal{C}=\operatorname{\mathsf{mod}}\Lambda$ as the extriangulated category with the negative first extension by Example \ref{ex_fingldim}.
Then there exist mutually inverse isomorphisms of posets
\[
\begin{tikzcd}
\stors \mathcal{C} \rar[shift left, "I_{(-)}"] & \operatorname{\mathsf{succ}}Q. \lar[shift left, "t_{(-)}"]
\end{tikzcd}
\]
\end{proposition}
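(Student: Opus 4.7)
The plan is to apply Corollary \ref{cor_stros_serre} to reduce everything to Serre torsion pairs and then exploit the fact that in the path algebra $kQ$ the composition factors of each indecomposable projective $P(i)$ are exactly the simples $S(j)$ with $j$ reachable from $i$ by a path. The two composite identities $t_{I_t}=t$ and $I_{t_I}=I$ are remarked in the excerpt to be routine---for the first, $\mathcal{T}$ is Serre and a Serre subcategory of $\operatorname{\mathsf{mod}} kQ$ is determined by the simples it contains---so the real content of the proof is well-definedness of the two assignments. Monotonicity will then be immediate from the definitions.

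First I would verify $I_t \in \operatorname{\mathsf{succ}} Q$ for each $t = (\mathcal{T},\mathcal{F}) \in \stors\mathcal{C}$. Fix $i \in I_t$ and an arrow $i \to j$; the target is $S(j) \in \mathcal{T}$. The key intermediate step is to establish $P(i) \in \mathcal{T}$. Apply (STP1) to $P(i)$ to obtain an $\mathfrak{s}$-conflation $0 \to T \to P(i) \to F \to 0$ with $T \in \mathcal{T}$ and $F \in \mathcal{F}$. Because $P(i)$ is indecomposable with simple top $S(i)$, any nonzero quotient of $P(i)$ again has top $S(i)$; if $F \neq 0$ the resulting surjection $F \twoheadrightarrow S(i)$, combined with Serreness of $\mathcal{F}$ (Corollary \ref{cor_stros_serre}), would place $S(i) \in \mathcal{F}$, contradicting $S(i) \in \mathcal{T}$ via (STP2). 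Hence $F = 0$, so $P(i) \in \mathcal{T}$. Since $\mathcal{T}$ is Serre and the arrow $i \to j$ contributes $S(j)$ as a composition factor of $\operatorname{rad} P(i) \subseteq P(i)$, we conclude $S(j) \in \mathcal{T}$, i.e., $j \in I_t$.

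For the reverse direction, take $I \in \operatorname{\mathsf{succ}} Q$ and show $t_I = (\mathcal{T}_I,\mathcal{F}_I) \in \stors \mathcal{C}$. By construction both parts are Serre, and the image of any morphism $\mathcal{T}_I \to \mathcal{F}_I$ has support in both $I$ and $Q_0 \setminus I$, hence vanishes; this gives (STP2), and then Proposition \ref{prop_quiverserre} yields (STP3). The substantive condition is (STP1): given $M \in \operatorname{\mathsf{mod}}\Lambda$, let $T$ be the largest submodule of $M$ with $\operatorname{\mathrm{supp}} T \subseteq I$ (well defined since the sum of two such submodules has the same property). If $(M/T)e_i \neq 0$ for some $i \in I$, lift a nonzero element to $m \in Me_i$; then $m\Lambda$ is a quotient of $P(i) = e_i\Lambda$, and successor-closedness of $I$ yields $\operatorname{\mathrm{supp}} P(i) \subseteq I$ (the reachable set from $i$ stays inside $I$), whence $m\Lambda \in \mathcal{T}_I$. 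Maximality of $T$ then forces $m\Lambda \subseteq T$, contradicting the choice of $m$. Thus $M/T \in \mathcal{F}_I$, as required.

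The hardest step will be the ``$P(i) \in \mathcal{T}$'' argument and its mirror (the maximal submodule construction); both pivot on combining Serreness of one half of the pair with the structure of $P(i)$, namely its simple top $S(i)$ and its composition factors indexed by paths out of $i$. Once these two well-definedness claims are in place, the routine identities $t_{I_t}=t$ and $I_{t_I}=I$ noted in the excerpt, together with the manifest order-preservation of both assignments, complete the proof.
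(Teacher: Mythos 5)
Your proposal is correct and follows essentially the same route as the paper: reduce to well-definedness of the two assignments via Corollary \ref{cor_stros_serre}, establish $P(i)\in\mathcal{T}$ so that Serreness of $\mathcal{T}$ forces $S(j)\in\mathcal{T}$ for each arrow $i\to j$, and verify (STP1) for $t_I$ using the largest submodule of $M$ supported on $I$ (the trace of $\mathcal{T}_I$). The only cosmetic differences are that the paper obtains $P(i)\in\mathcal{T}$ by citing closure under projective covers from the proof of Proposition \ref{prop_quiverserre}, where you argue directly from the torsion decomposition of $P(i)$ and its simple top, and the paper phrases the quotient step via right approximations rather than elementwise; both variants are valid.
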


\begin{proof}
Since $t_{(-)}$ and $I_{(-)}$ are mutually inverse to each other and order-preserving, it is enough to show that these maps are well-defined.

Let $t = (\mathcal{T},\mathcal{F})$ be an $s$-torsion pair in $\mathcal{C}$.
We show that $I_{t} \in \operatorname{\mathsf{succ}}Q$.
Assume that there exists an arrow $i \to j$ in $Q$ such that $i \in I_{t}$.
Then $S(i) \in \mathcal{T}$.
By the proof of Proposition \ref{prop_quiverserre}(2)$\Rightarrow$(1), $\mathcal{T}$ is closed under projective covers.
Thus $P(i) \in \mathcal{T}$.
On the other hand, since there exists an arrow $i \to j$, we obtain that $S(j)$ is a composition factor of $P(i)$.
By Corollary \ref{cor_stros_serre}, $\mathcal{T}$ is a Serre subcategory, and hence $S(j) \in \mathcal{T}$.

Let $I \in \operatorname{\mathsf{succ}}Q$.
We show that $t_{I}=(\mathcal{T}_{I}, \mathcal{F}_{I}) \in \stors \mathcal{C}$.
Since Serre subcategories $\mathcal{T}_{I}$ and $\mathcal{F}_{I}$ do not share any simple modules, we have $\operatorname{Hom}_{\Lambda}(\mathcal{T}_{I}, \mathcal{F}_{I})=0$.
By Corollary \ref{cor_stros_serre}, it is enough to show that $\mathcal{C} \subseteq \mathcal{T}_{I} \ast \mathcal{F}_{I}$.
Let $M\in \mathcal{C}$ be an arbitrary module and $tM$ the trace of $\mathcal{T}_{I}$ in $M$, that is, it is a submodule of $M$ generated by all homomorphic images of any $T\in \mathcal{T}_{I}$ in $M$.
Then $tM\in \mathcal{T}_{I}$ holds.
By $I \in \operatorname{\mathsf{succ}}Q$, we have $P_{I}:=\oplus_{i\in I}P(i)\in \mathcal{T}_{I}$.
Since the inclusion $tM\to M$ is a right $\mathcal{T}_{I}$-approximation, we obtain that $\operatorname{Hom}_{\Lambda}(P_{I},M/tM)=0$, and hence $M/tM\in \mathcal{F}_{I}$.
This implies that $M\in \mathcal{T}_{I}\ast \mathcal{F}_{I}$.
\end{proof}

Now, we give an interpretation of Theorem \ref{mainthm} in terms of successor-closed subsets.
Let $I_{1}, I_{2} \in \operatorname{\mathsf{succ}} Q$ satisfying $I_1 \subseteq I_2$.
We put
\begin{align}
\operatorname{\mathsf{succ}}[I_{1}, I_{2}]:=\{I \in \operatorname{\mathsf{succ}}Q \mid I_{1} \subseteq I \subseteq I_{2}\}, \notag
\end{align}
and let $Q_{[I_1,I_2]}$ be the full subquiver of $Q$ whose vertex set is $I_2 \setminus I_1$.

\begin{proposition}\label{prop_succl2}
Let $Q$ be a finite acyclic quiver and $I_{1}, I_{2} \in \operatorname{\mathsf{succ}} Q$ with $I_1 \subseteq I_2$.
Then there exist mutually inverse isomorphisms of posets
\[
\begin{tikzcd}
{\operatorname{\mathsf{succ}}[I_{1}, I_{2}]} \rar[shift left, "\varphi"] & \operatorname{\mathsf{succ}}Q_{[I_{1}, I_{2}]}, \lar[shift left, "\psi"]
\end{tikzcd}
\]
where $\varphi(I):=I \setminus I_{1}$ and $\psi(J):=I_{1}\cup J$.
\end{proposition}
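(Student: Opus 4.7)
The plan is to give a direct combinatorial verification, although the statement could alternatively be deduced by applying Theorem \ref{mainthm} to the $s$-torsion pairs $t_{I_1} \le t_{I_2}$ of Proposition \ref{prop_succl} and observing that the heart $\mathcal{T}_{I_2} \cap \mathcal{F}_{I_1}$ consists of modules supported on $I_2 \setminus I_1$, hence coincides with $\operatorname{\mathsf{mod}} kQ_{[I_1,I_2]}$. Either way, the argument breaks into four steps: well-definedness of $\varphi$, well-definedness of $\psi$, the mutual-inverse relations, and monotonicity.

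First, I would check that $\varphi(I) = I \setminus I_1$ is successor-closed in $Q_{[I_1,I_2]}$. For an arrow $i \to j$ of $Q_{[I_1,I_2]}$ with $i \in I \setminus I_1$, both endpoints lie in $I_2 \setminus I_1$; since $I$ is successor-closed in $Q$ and $i \in I$, we have $j \in I$, so $j \in I \setminus I_1$ using $j \notin I_1$. Next, I would verify that $\psi(J) = I_1 \cup J$ lies in $\operatorname{\mathsf{succ}}[I_1,I_2]$. The containment $I_1 \subseteq \psi(J) \subseteq I_2$ is immediate from $J \subseteq I_2 \setminus I_1$. To check successor-closedness in $Q$, take an arrow $i \to j$ with $i \in I_1 \cup J$: if $i \in I_1$ then $j \in I_1$ by successor-closedness of $I_1$; if $i \in J \subseteq I_2$, successor-closedness of $I_2$ forces $j \in I_2$, and either $j \in I_1$ (done) or $j \in I_2 \setminus I_1$, in which case $i \to j$ is an arrow of $Q_{[I_1,I_2]}$ and the successor-closedness of $J$ there yields $j \in J$.

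For the inverse relations, $\varphi(\psi(J)) = (I_1 \cup J) \setminus I_1 = J$ since $J$ and $I_1$ are disjoint, and $\psi(\varphi(I)) = I_1 \cup (I \setminus I_1) = I$ since $I_1 \subseteq I$. Monotonicity of both maps with respect to inclusion is obvious. The only subtlety lies in the well-definedness of $\psi$, where one must correctly exploit the successor-closed conditions on both $I_1$ and $I_2$ along with that on $J \in \operatorname{\mathsf{succ}} Q_{[I_1, I_2]}$; this is the main (minor) obstacle in the proof.
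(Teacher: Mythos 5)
Your argument is correct, but it takes a genuinely different route from the paper. The paper proves this proposition by invoking the categorical machinery: it first observes that the heart $\mathcal{H}_{[t_{I_1},t_{I_2}]}=\mathcal{T}_{I_2}\cap\mathcal{F}_{I_1}$ is the Serre subcategory of modules supported on $I_2\setminus I_1$, hence equivalent to $\operatorname{\mathsf{mod}} kQ_{[I_1,I_2]}$, and then defines $\varphi:=I_{(-)}\Phi t_{(-)}$ and $\psi:=I_{(-)}\Psi t_{(-)}$ by sandwiching the isomorphisms $\Phi,\Psi$ of Theorem \ref{mainthm} between the bijections of Proposition \ref{prop_succl}, finally computing that these composites equal $I\mapsto I\setminus I_1$ and $J\mapsto I_1\cup J$. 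You instead give the direct combinatorial verification (well-definedness of both maps, the mutual-inverse identities, and monotonicity), which is exactly the alternative the authors acknowledge in the remark following the proposition (``one can directly prove it''). Your checks are all sound; in particular the one genuinely nontrivial point --- that $I_1\cup J$ is successor-closed in $Q$, which requires splitting on whether the source of an arrow lies in $I_1$ or in $J$ and, in the latter case, whether the target falls into $I_1$ or into $I_2\setminus I_1$ --- is handled correctly. What your route buys is brevity and self-containment; what the paper's route buys is the demonstration that the combinatorial bijection is a shadow of the general heart-reduction theorem, which is the actual point of this subsection.
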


\begin{proof}
Since $I_{1} \subseteq I_{2} \in \operatorname{\mathsf{succ}}Q$, we obtain that
\begin{align}
\mathcal{H}:=\mathcal{H}_{[t_{I_1},t_{I_2}]} = \mathcal{T}_{I_2} \cap \mathcal{F}_{I_1}=\{M \in \operatorname{\mathsf{mod}} \Lambda \mid \operatorname{\mathsf{supp}}M \subseteq I_{2} \setminus I_{1}\} \notag
\end{align}
is a Serre subcategory.
Thus $\mathcal{H}$ is filtered by simple modules whose supports are contained in $I_{2} \setminus I_{1}$.
Hence this is equivalent to the module category of $k Q_{[I_{1}, I_{2}]}$.
By Theorem \ref{mainthm} and Proposition \ref{prop_succl}, we have the following mutually inverse isomorphisms of posets.
\[
\begin{tikzcd}
\stors [t_{I_{1}}, t_{I_{2}}] \rar[shift left, "\Phi"] \dar[shift left, "I_{(-)}"]& \stors (\operatorname{\mathsf{mod}}kQ_{[I_{1},I_{2}]})\lar[shift left, "\Psi"]\dar[shift left, "I_{(-)}"]\\
\operatorname{\mathsf{succ}}[I_{1}, I_{2}] \uar[shift left, "t_{(-)}"] & \operatorname{\mathsf{succ}}Q_{[I_{1}, I_{2}]}\uar[shift left, "t_{(-)}"]
\end{tikzcd}
\]
Let $\varphi:=I_{(-)} \Phi t_{(-)}$ and $\psi:=I_{(-)} \Psi t_{(-)}$.
Then $\varphi$ and $\psi$ are mutually inverse isomorphisms of posets.
Moreover, we obtain that $\varphi(I)=I_{\Phi(t_{I})}=I \setminus I_{1}$ for each $I \in \operatorname{\mathsf{succ}}[I_{1}, I_{2}]$ and $\psi(J)=I_{\Psi(t_{J})}=I_{1} \cup J$ for each $J \in \operatorname{\mathsf{succ}}Q_{[I_{1}, I_{2}]}$.
This finishes the proof.
\end{proof}

Note that since the above proposition is purely combinatorial, one can directly prove it.
Thus Theorem \ref{mainthm} gives a categorical interpretation of this combinatorial statement.

We finish this subsection with giving a concrete example.

\begin{example}
Let $Q$ be a quiver $1 \rightarrow 2 \leftarrow 3 \leftarrow 4$.
Then
\begin{align}
\operatorname{\mathsf{succ}}Q=\{\emptyset, \{2\}, \{1,2\}, \{2,3\}, \{1,2,3\}, \{2,3,4\}, Q_{0}\}.\notag
\end{align}
For each $I \in \operatorname{\mathsf{succ}}Q$, we write the corresponding $s$-torsion pair $(\mathcal{T}_{I}, \mathcal{F}_{I})$ in the Auslander--Reiten quiver of $\operatorname{\mathsf{mod}} kQ$, where the black vertices are $\mathcal{T}_{I}$ and the white vertices are $\mathcal{F}_{I}$.
The following table gives the isomorphism in Proposition \ref{prop_succl}.

\begin{picture}(420,300)(0,0)

\put(30,175){\begin{tikzpicture}[scale = 1]
\node (3) at (3,2) {$3$};
\node (32) at (1,0) {$\substack{3 \\ 2}$};
\node (123) at (2,1) {$\substack{1\,3 \\ 2}$};
\node (2) at (0,1) {$2$};
\node (12) at (1,2) {$\substack{1 \\ 2}$};
\node (1432) at (3,0) {$\substack{\mathrm{~} \, 4\\ 1\, 3 \\ 2}$};
\node (4) at (5,2) {$4$};
\node (43) at (4,1) {$\substack{4 \\ 3}$};
\node (432) at (2,-1) {$\substack{4 \\ 3 \\ 2}$};
\node (1) at (4,-1) {$1$};
\draw[->] (2) -- (12);
\draw[->] (2) -- (32);
\draw[->] (32) -- (123);
\draw[->] (12) -- (123);
\draw[->] (123) -- (3);
\draw[->] (123) -- (1432);
\draw[->] (32) -- (432);
\draw[->] (432) -- (1432);
\draw[->] (1432) -- (1);
\draw[->] (3) -- (43);
\draw[->] (1432) -- (43);
\draw[->] (43) -- (4);
\end{tikzpicture}}
\put(10,165){The Auslander--Reiten quiver of $\operatorname{\mathsf{mod}} kQ$}

\put(70,25){\begin{tikzpicture}[scale = 1]
\node (A) at (2,2) {A};
\node (B) at (1,1) {B};
\node (C) at (3,1) {C};
\node (D) at (1,0) {D};
\node (E) at (3,0) {E};
\node (F) at (2,-1) {F};
\node (G) at (2,-2) {G};
\draw[->] (A) -- (B);
\draw[->] (A) -- (C);
\draw[->] (B) -- (D);
\draw[->] (C) -- (E);
\draw[->] (B) -- (E);
\draw[->] (D) -- (F);
\draw[->] (E) -- (F);
\draw[->] (F) -- (G);
\end{tikzpicture}}
\put(65,10){The Hasse quiver}

\put(250,150){\begin{tabular}{c|c|c}
&$\operatorname{\mathsf{succ}}Q$&$\stors \mathcal{C}$ \\ \hline \hline
A&$Q_{0}$&\begin{tikzpicture}[baseline={([yshift=-.5ex]current bounding box.center)}, scale=0.35,  every node/.style={scale=0.5}]
\node (2) at (0,2) [black] {};
\node (12) at (1,3) [black] {};
\node (23) at (1,1) [black] {};
\node (123) at (2,2) [black] {};
\node (234) at (2,0) [black] {};
\node (3) at (3,3) [black] {};
\node (1234) at (3,1) [black] {};
\node (34) at (4,2) [black] {};
\node (1) at (4,0) [black] {};
\node (4) at (5,3) [black] {};
\draw[->] (2) -- (12);
\draw[->] (2) -- (23);
\draw[->] (12) -- (123);
\draw[->] (23) -- (123);
\draw[->] (23) -- (234);
\draw[->] (123) -- (3);
\draw[->] (123) -- (1234);
\draw[->] (234) -- (1234);
\draw[->] (3) -- (34);
\draw[->] (1234) -- (34);
\draw[->] (1234) -- (1);
\draw[->] (34) -- (4);
\node at (0,-.2) {};
\node at (0,3.2) {};
\end{tikzpicture} \\ \hline
B&$\{ 1,2,3 \}$&\begin{tikzpicture}[baseline={([yshift=-.5ex]current bounding box.center)}, scale=0.35,  every node/.style={scale=0.5}]
\node (2) at (0,2) [black] {};
\node (12) at (1,3) [black] {};
\node (23) at (1,1) [black] {};
\node (123) at (2,2) [black] {};
\node (234) at (2,0) [] {};
\node (3) at (3,3) [black] {};
\node (1234) at (3,1) [] {};
\node (34) at (4,2) [] {};
\node (1) at (4,0) [black] {};
\node (4) at (5,3) [white] {};
\draw[->] (2) -- (12);
\draw[->] (2) -- (23);
\draw[->] (12) -- (123);
\draw[->] (23) -- (123);
\draw[->] (23) -- (234);
\draw[->] (123) -- (3);
\draw[->] (123) -- (1234);
\draw[->] (234) -- (1234);
\draw[->] (3) -- (34);
\draw[->] (1234) -- (34);
\draw[->] (1234) -- (1);
\draw[->] (34) -- (4);
\node at (0,-.2) {};
\node at (0,3.2) {};
\end{tikzpicture} \\ \hline
C&$\{ 2,3,4 \}$&\begin{tikzpicture}[baseline={([yshift=-.5ex]current bounding box.center)}, scale=0.35,  every node/.style={scale=0.5}]
\node (2) at (0,2) [black] {};
\node (12) at (1,3) [] {};
\node (23) at (1,1) [black] {};
\node (123) at (2,2) [] {};
\node (234) at (2,0) [black] {};
\node (3) at (3,3) [black] {};
\node (1234) at (3,1) [] {};
\node (34) at (4,2) [black] {};
\node (1) at (4,0) [white] {};
\node (4) at (5,3) [black] {};
\draw[->] (2) -- (12);
\draw[->] (2) -- (23);
\draw[->] (12) -- (123);
\draw[->] (23) -- (123);
\draw[->] (23) -- (234);
\draw[->] (123) -- (3);
\draw[->] (123) -- (1234);
\draw[->] (234) -- (1234);
\draw[->] (3) -- (34);
\draw[->] (1234) -- (34);
\draw[->] (1234) -- (1);
\draw[->] (34) -- (4);
\node at (0,-.2) {};
\node at (0,3.2) {};
\end{tikzpicture} \\ \hline
D&$\{ 1,2 \}$&\begin{tikzpicture}[baseline={([yshift=-.5ex]current bounding box.center)}, scale=0.35,  every node/.style={scale=0.5}]
\node (2) at (0,2) [black] {};
\node (12) at (1,3) [black] {};
\node (23) at (1,1) [] {};
\node (123) at (2,2) [] {};
\node (234) at (2,0) [] {};
\node (3) at (3,3) [white] {};
\node (1234) at (3,1) [] {};
\node (34) at (4,2) [white] {};
\node (1) at (4,0) [black] {};
\node (4) at (5,3) [white] {};
\draw[->] (2) -- (12);
\draw[->] (2) -- (23);
\draw[->] (12) -- (123);
\draw[->] (23) -- (123);
\draw[->] (23) -- (234);
\draw[->] (123) -- (3);
\draw[->] (123) -- (1234);
\draw[->] (234) -- (1234);
\draw[->] (3) -- (34);
\draw[->] (1234) -- (34);
\draw[->] (1234) -- (1);
\draw[->] (34) -- (4);
\node at (0,-.2) {};
\node at (0,3.2) {};
\end{tikzpicture} \\ \hline
E&$\{ 2,3 \}$&\begin{tikzpicture}[baseline={([yshift=-.5ex]current bounding box.center)}, scale=0.35,  every node/.style={scale=0.5}]
\node (2) at (0,2) [black] {};
\node (12) at (1,3) [] {};
\node (23) at (1,1) [black] {};
\node (123) at (2,2) [] {};
\node (234) at (2,0) [] {};
\node (3) at (3,3) [black] {};
\node (1234) at (3,1) [] {};
\node (34) at (4,2) [] {};
\node (1) at (4,0) [white] {};
\node (4) at (5,3) [white] {};
\draw[->] (2) -- (12);
\draw[->] (2) -- (23);
\draw[->] (12) -- (123);
\draw[->] (23) -- (123);
\draw[->] (23) -- (234);
\draw[->] (123) -- (3);
\draw[->] (123) -- (1234);
\draw[->] (234) -- (1234);
\draw[->] (3) -- (34);
\draw[->] (1234) -- (34);
\draw[->] (1234) -- (1);
\draw[->] (34) -- (4);
\node at (0,-.2) {};
\node at (0,3.2) {};
\end{tikzpicture} \\ \hline
F&$\{ 2 \}$&\begin{tikzpicture}[baseline={([yshift=-.5ex]current bounding box.center)}, scale=0.35,  every node/.style={scale=0.5}]
\node (2) at (0,2) [black] {};
\node (12) at (1,3) [] {};
\node (23) at (1,1) [] {};
\node (123) at (2,2) [] {};
\node (234) at (2,0) [] {};
\node (3) at (3,3) [white] {};
\node (1234) at (3,1) [] {};
\node (34) at (4,2) [white] {};
\node (1) at (4,0) [white] {};
\node (4) at (5,3) [white] {};
\draw[->] (2) -- (12);
\draw[->] (2) -- (23);
\draw[->] (12) -- (123);
\draw[->] (23) -- (123);
\draw[->] (23) -- (234);
\draw[->] (123) -- (3);
\draw[->] (123) -- (1234);
\draw[->] (234) -- (1234);
\draw[->] (3) -- (34);
\draw[->] (1234) -- (34);
\draw[->] (1234) -- (1);
\draw[->] (34) -- (4);
\node at (0,-.2) {};
\node at (0,3.2) {};
\end{tikzpicture} \\ \hline
G&$\emptyset$&\begin{tikzpicture}[baseline={([yshift=-.5ex]current bounding box.center)}, scale=0.35,  every node/.style={scale=0.5}]
\node (2) at (0,2) [white] {};
\node (12) at (1,3) [white] {};
\node (23) at (1,1) [white] {};
\node (123) at (2,2) [white] {};
\node (234) at (2,0) [white] {};
\node (3) at (3,3) [white] {};
\node (1234) at (3,1) [white] {};
\node (34) at (4,2) [white] {};
\node (1) at (4,0) [white] {};
\node (4) at (5,3) [white] {};
\draw[->] (2) -- (12);
\draw[->] (2) -- (23);
\draw[->] (12) -- (123);
\draw[->] (23) -- (123);
\draw[->] (23) -- (234);
\draw[->] (123) -- (3);
\draw[->] (123) -- (1234);
\draw[->] (234) -- (1234);
\draw[->] (3) -- (34);
\draw[->] (1234) -- (34);
\draw[->] (1234) -- (1);
\draw[->] (34) -- (4);
\node at (0,-.2) {};
\node at (0,3.2) {};
\end{tikzpicture} \\ \hline
\end{tabular}}
\end{picture}

Let $I_{1}:=\{2\}, I_{2}:=\{2,3,4\} \in \operatorname{\mathsf{succ}}Q$.
Then $Q_{[I_{1},I_{2}]}=(3 \leftarrow 4)$.
Thus we can easily check that $\operatorname{\mathsf{succ}}[I_{1}, I_{2}]=\{I_{1}, \{2,3\}, I_{2}\}$ and $\operatorname{\mathsf{succ}}Q_{[I_{1}, I_{2}]}=\{\emptyset, \{3\}, \{3, 4\} \}$.
Hence we obtain the isomorphisms in Proposition \ref{prop_succl2}.
\end{example}

\subsection*{Acknowledgements}
The second author would like to thank Shunya Saito for helpful discussions on periodic derived categories in Remark \ref{rem_periodic}.

\end{document}